\newcommand{\rev}[1]{{\color{black}#1}}
\tikzstyle{nodes1} = [circle, rounded corners, minimum width=1cm, minimum height=1cm,text centered, draw=black, fill=white!30]
\tikzstyle{arrow} = [thick,->,>=stealth]
\newcommandx{\unsure}[2][1=]{\todo[linecolor=red,backgroundcolor=red!25,bordercolor=red,#1]{#2}}
\newcommandx{\change}[2][1=]{\todo[linecolor=blue,backgroundcolor=blue!25,bordercolor=blue,#1]{#2}}
\newcommandx{\info}[2][1=]{\todo[linecolor=OliveGreen,backgroundcolor=OliveGreen!25,bordercolor=OliveGreen,#1]{#2}}
\newcommandx{\improvement}[2][1=]{\todo[linecolor=Plum,backgroundcolor=Plum!25,bordercolor=Plum,#1]{#2}}
\newcommandx{\thiswillnotshow}[2][1=]{\todo[disable,#1]{#2}}
\newcommand{\re}{\mathbb{R}}
\newcommand{\mR}{\mathbb{R}}
\newcommand{\N}{\mathbb{N}}
\newcommand{\lmd}{\lambda}
\def\af{\alpha}
\newcommand{\reff}[1]{(\ref{#1})}
\newcommand{\qmod}[1]{\mbox{QM}[#1]}
\newcommand{\st}{\mathit{s.t.}}
\newcommand{\bdes}{\begin{description}}
	\newcommand{\edes}{\end{description}}
\newcommand{\bal}{\begin{align}}
	\newcommand{\eal}{\end{align}}
\newcommand{\bnum}{\begin{enumerate}}
	\newcommand{\enum}{\end{enumerate}}
\newcommand{\bit}{\begin{itemize}}
	\newcommand{\eit}{\end{itemize}}
\newcommand{\bea}{\begin{eqnarray}}
	\newcommand{\eea}{\end{eqnarray}}
\newcommand{\be}{\begin{equation}}
	\newcommand{\ee}{\end{equation}}
\newcommand{\baray}{\begin{array}}
	\newcommand{\earay}{\end{array}}
\newcommand{\bsry}{\begin{subarray}}
	\newcommand{\esry}{\end{subarray}}
\newcommand{\bca}{\begin{cases}}
	\newcommand{\eca}{\end{cases}}
\newcommand{\bcen}{\begin{center}}
	\newcommand{\ecen}{\end{center}}
\newcommand{\bbm}{\begin{bmatrix}}
	\newcommand{\ebm}{\end{bmatrix}}
\newcommand{\bmx}{\begin{matrix}}
	\newcommand{\emx}{\end{matrix}}
\newcommand{\bpm}{\begin{pmatrix}}
	\newcommand{\epm}{\end{pmatrix}}
\newcommand{\btab}{\begin{tabular}}
	\newcommand{\etab}{\end{tabular}}
\def\<#1,#2>{\langle #1,#2\rangle}
\def\<#1>{\langle #1\rangle}
\newtheorem{prop}[theorem]{Proposition}
\newtheorem{cor}[theorem]{Corollary}
\newcommand{\extp}{\@ifnextchar^\@extp{\@extp^{\,}}}
\def\@extp^#1{\mathop{\bigwedge\nolimits^{\!#1}}}
\numberwithin{equation}{section}
\begin{document}
	
	\title{On the complexity of matrix  Putinar's   Positivstellens{\"a}tz}
	
	\author{Lei Huang\thanks{Department of Mathematics,
			University of California San Diego (\email{leh010@ucsd.edu})}
		%\and Jiawang Nie\thanks{Department of Mathematics,
			%	University of California San Diego (\email{njw@math.ucsd.edu})}
		%\and Ya-Xiang  Yuan\thanks{ Institute of Computational Mathematics and Scientific/Engineering Computing,
			%	Academy of Mathematics and Systems Science,
			%	Chinese Academy of Sciences (\email{yyx@lsec.cc.ac.cn})}
	}
	% \date{}

	\maketitle
	
	\begin{abstract}
		This paper studies the complexity of matrix Putinar's Positivstellens{\"a}tz on the semialgebraic  set that is given by the polynomial matrix inequality.  \rev{When the quadratic module generated by the constrained polynomial matrix is Archimedean},  we prove  a polynomial bound on the degrees of  terms appearing  in the representation of matrix Putinar's Positivstellens{\"a}tz.
		Estimates on the exponent and  constant are  given.   
		As a byproduct,  a polynomial bound on the  convergence rate of   matrix sum-of-squares relaxations is obtained, which resolves an open question raised by Dinh and  Pham.  When the constraining set is unbounded, we also prove a similar bound for the matrix version of Putinar--Vasilescu's Positivstellens{\"a}tz by exploiting homogenization techniques. 
	\end{abstract}
	
	\begin{keywords}
		Polynomial matrix, polynomial optimization, matrix Putinar's  Positivstellens{\"a}tz, matrix sum-of-squares relaxations
	\end{keywords}
	
	\begin{MSCcodes}{90C23, 65K05, 90C22, 90C26}
	\end{MSCcodes}

	\section{Introduction}
	
	Optimization with polynomial matrix inequalities has broad applications \cite{cgcon,FF20,hdlb,hdlb2,HDLJ,Ichihara,cscon,schhol,tzkh,yzgf}.
	A fundamental question  to solve it globally is  how to  efficiently characterize polynomial matrices that are  positive semidefinite on the semialgebraic set described by the  polynomial matrix inequality. To be more specific, given an $\ell$-by-$\ell$ symmetric polynomial matrix $F(x)$   in $x:=(x_1,\dots,x_n)$ and a semialgebraic set $K$, 
	how can we  certify  that $F\succeq 0$ on $K$?  Here, the set $K$ is given as 
	\be \label{def:K}
	K=\{x\in \mR^n \mid G(x)\succeq 0\},
	\ee
	where  $G(x)$  is an $m$-by-$m$ symmetric polynomial  matrix.
	%\footnote{For cleanness, we only consider one polynomial matrix inequality constraint. If there are finitely many ones, we can replace them by a block diagonal matrix inequality whose diagonal elements are these constraining polynomial matrices. } 
	
	When $F(x)$ is a scalar polynomial (i.e., $\ell=1$) and  $G(x)$ is  a diagonal matrix whose diagonal entries are polynomials $g_1(x)$, $\dots$, $g_m(x)$,  $K$ can be equivalently described as 
	\be \label{scalar:K}
	K=\{x\in \mR^n \mid g_1(x)\geq 0,\dots,g_m(x)\geq 0\}.
	\ee
	Then, it reduces to the classical Positivstellens{\"a}tz in real algebraic geometry and polynomial optimization.   A polynomial $p$ is said to be a sum of squares (SOS) if
	$ p=p_1^2+\dots+p_t^2$ for polynomials $p_1,\dots,p_t$. 
	Schm{\"u}dgen \cite{sk91} proved that if $K$ is compact and $F>0$ on $K$, then there exist SOS polynomials $\sigma_{\alpha}$ such that
	\be \label{preorder}
	F=\sum\limits_{\alpha\in \{0,1\}^m}\sigma_{\alpha} g_1^{\alpha_1}\cdots g_m^{\alpha_m}.
	\ee
	\rev{When the quadratic module of scalar polynomials $\qmod{G}$ generated by $G$ is Archimedean (a condition slightly stronger than compactness, refer  to Section \ref{sec:qm})},  Putinar \cite{putinar1993positive} showed that if  $F>0$ on $K$, then there exist SOS polynomials $\sigma_{0},\sigma_{1},\dots,\sigma_{m}$ such that
	\be \label{quamod}
	F=\sigma_{0}+\sigma_{1} g_1\rev{+\cdots+}\sigma_{m} g_m,
	\ee
	which is referred to  as Putinar's Positivstellens{\"a}tz in the literature.

	The complexities of  the above scalar Positivstellens{\"a}tze have been studied extensively. Suppose that $K\subseteq (-1,1)^n$ and the degree of $F$ is $d$. For the degree $d$, denote 
	\[
	\mathbb{N}_d^n:=\left\{\alpha \in \mathbb{N}^n \mid \left.\alpha_1+\cdots+\alpha_n \leq d\right\}.\right.
	\] 
	For a polynomial  $p=\sum\limits_{\alpha \in \mathbb{N}_d^n}p_{\alpha}\frac{|\alpha|!}{\alpha_1!\cdots\alpha_n!}x^{\alpha}$, denote the norm  
	$
	\|p\|=\max\{\rev{|p_{\alpha}|}:\alpha \in \mathbb{N}_d^n\}.
	$ 
	It was shown in \cite{sm04} that if $\rev{F\geq F_{\min}>0}$ on $K$, then for 
	\[
	k\geq    c d^2\left(1+\left(d^2 n^d \frac{\|F\|}{F_{\min}}\right)^c\right),
	\]
	there exist SOS polynomials $\sigma_{\alpha} $  with   $\deg(\sigma_{\alpha} g_1^{\alpha_1}\cdots g_m^{\alpha_m})\leq k$  such that \reff{preorder} holds. Here, $c$ is a positive constant that depends on the constraining polynomials $g_1,\dots,g_m$.
	Based on the above bound, Nie and Schweighofer \cite{niesch} proved that if  $\qmod{G}$ is Archimedean,  there exists a   constant $c>0$, depending on  $g_1,\dots,g_m$, such that if
	\be \label{Put:deg}
	k \geq c \exp \left(\left(d^2 n^d \frac{\|F\|}{F_{\min}}\right)^c\right),
	\ee
	then  \reff{quamod} holds for some SOS polynomials $\sigma_0$, $\dots$, $\sigma_m $  with   $\deg(\sigma_0)\leq k$, $\deg(\sigma_i g_i)\leq k$ $(i=1,\dots,m)$. Recently,  Baldi and  Mourrain improved the exponent bound \reff{Put:deg} to a polynomial bound. It was shown in \cite{lbbm}  that  the same result holds  if 
	\be \label{Put:deg2}
	k\geq  \gamma d^{3.5nL}\left(\frac{\|F\|_{\max}}{F_{\min}}\right)^{2.5nL}.
	\ee
	In the above,   $ \gamma$ is a positive constant depending on $n$, $g_1,\dots,g_m$, $L$ is the  {\L}ojasiewicz exponent associated with $g_1,\dots,g_m$ and $\|F\|_{\max}$ is the max norm on the cube $[-1,1]^n$.   The  bound \reff{Put:deg2} has been further improved in \cite{bmp} by  removing the
	dependency of the exponent  on the dimension $n$ and providing   estimates on the constant $\gamma$.
	There  exist better degree bounds when the set $K$ is specified, see \cite{bfra,blls,FF20,slot,sllm1,sllm2}. We also refer to \cite{kmz} when $F,g_1,\dots,g_m$  admit the correlative sparse sparsity.

	Based on Putinar's Positivstellens{\"a}tz, Lasserre \cite{Las01} introduced  the    Moment-SOS hierarchy for solving    polynomial optimization  and proved its asymptotic convergence.   The above  degree bounds give estimates on the  convergence rates of the    Moment-SOS hierarchy directly.
	We also refer to  \cite{dKlLau11,huang2023,hny,hny2,mar06,nieopcd,Sch05}
	for the finite convergence of the  Moment-SOS
	hierarchy under some regular assumptions.

	When $F$, $G$ are general polynomial matrices,  a matrix version of Putinar's Positivstellens{\"a}tz was provided in \cite{kojm,schhol}. 
	\rev{Let $\mathbb{R}[x]:=\mathbb{R}\left[x_1, \ldots, x_n\right]$ be the ring of polynomials in $x:=\left(x_1, \ldots, x_n\right)$ with real coefficients.
	A polynomial
	matrix $S(x)$ is SOS if $S(x) = H(x)^TH(x)$ for some polynomial matrix $H(x)$.  The set of all $\ell$-by-$\ell$  SOS polynomial matrices in $x$ is denoted as $\Sigma[x]^{\ell}$.
	The  quadratic module of $\ell$-by-$\ell$  polynomial matrices generated
	by  $G(x)$  is the set
	\be \nonumber 
	\qmod{G}^{\ell}:=\left\{
	S+\sum_{i=1}^t P_i^TGP_i \mid t\in \mathbb{N},~ S \in \Sigma[x]^{\ell},~P_i \in \mathbb{R}[x]^{m\times \ell}
	\right\}.
	\ee
	 	The set $\qmod{G}^{\ell}$ is Archimedean if there exists a scalar $r>0$ such that $(r-\|x\|_2^2) \cdot I_{\ell} \in \mathrm{QM}[G]^{\ell}$.
	 	 When the quadratic module $\qmod{G}^{\ell}$ is Archimedean,}  it was proved in \cite{kojm,schhol} that if $F\succ 0$ on $K$, then we have that $F \in \qmod{G}^{\ell}$.
   For the  case that  $F$ is a polynomial matrix and $G$ is diagonal (i.e., $K$ is as in  \reff{scalar:K}), the exponential bound \reff{Put:deg} was generalized to the matrix case in \cite{helnie}. \rev{There are specific estimates on degree bounds when  $G$ is a simple set defined by scalar polynomial inequalities. We refer to \cite{FF20} for the sphere case and refer to \cite{sllm2} for the  boolean hypercube case.} When $G$ is a general polynomial matrix,  the complexity of   matrix  Putinar's Positivstellens{\"a}tz  is open, to the best of the author's knowledge.   We also refer to  \cite{cj,kism,sk09} for other types of SOS representations for polynomial matrices.

	\subsection*{Contributions}
	This paper studies the complexity of matrix Putinar's Positivstellens{\"a}tz.
	Throu-\\ghout the paper, we assume that 
	\[
	K\subseteq \{x\in\mR^n\mid 1-\|x\|_2^2\geq0\},
	\]
	unless otherwise specified.
	%We can always be in this setting by a change of variables if we start with an Archimedean quadratic module. Indeed, if $R^2-\|\mathbf{X}\|_2^2 \in \mathcal{Q}(\mathbf{g})$ then $1-\|\mathbf{X}\|_2^2 \in \mathcal{Q}(\mathbf{g}(R \mathbf{X}))$ (i.e. the quadratic module generated by $\left.g_i\left(R X_1, \ldots, R X_n\right)\right)$. 
	Denote the scaled simplex 
	\be \label{sca:simplex}
	\bar{\Delta}:=\left\{x \in \mathbb{R}^n \mid 1+x_1 \geq 0, \ldots, 1+x_n \geq 0, \sqrt{n}-x_1-\cdots-x_n \geq 0\right\}.
	\ee
	Note that the set $K \subset \bar{\Delta}$ in our settings.  The degree of the polynomial matrix $G$ is denoted by $d_{G}$, i.e.,
	\be \label{deg:G}
	d_{G}=\max\{\deg(G_{ij}),~1\leq i, j\leq m\}.
	\ee
	For a positive integer $m$,  denote 
	\be  \label{int:num:scapol}
	\theta(m)=\sum\limits_{i=1}^m\left(\prod_{k=m+1-i}^{m} \frac{k(k+1)}{2}\right).
	\ee
	Let  $\|F\|_B$  be the Bernstein norm  of $ F$ on the scaled simplex $\bar{\Delta}$ (see Section \ref{Matrix:poly} for details). 
	%The set of all polynomial matrices with representations \reff{mat:put} is  the quadratic module of $\ell$-by-$\ell$  polynomial matrices generated
	%by  $G(x)$ in $\mR[x]^{\ell\times \ell}$, for which we denote by  $\qmod{G}^{\ell}$. 

	The following is our main result.  It generalizes the recent results of  scalar Putinar's Positivstellens{\"a}tz, as shown in  \cite{lbbm,bmp}, to the matrix case  and  provides the first degree bound for matrix Putinar's Positivstellens{\"a}tz.
	%It generalizes the recent work on to the matrix case
	\begin{theorem}
		\label{intro:comp:matPut3}
		Let $K$ be as in \reff{def:K} with $m\geq 2$.	Suppose $F(x)$ is an $\ell$-by-$\ell$   symmetric  polynomial matrix of degree $d$, and $ (1-\|x\|_2^2)\cdot I_{\ell}\in \qmod{G}^{\ell}$.
		If $F(x)\succeq F_{\min} \cdot  \rev{I_{\ell}} \succ 0$ on $K$, then for
		\be \label{put3:degboun}
		k\geq  C \cdot  {8}^{7\eta}\cdot3^{6(m-1)} \theta(m)^3n^2d_{G}^{6}\kappa^{7}d^{14\eta}\left(\frac{\|F\|_B}{F_{\min}}\right)^{7\eta+3},
		\ee
		there exists an $\ell$-by-$\ell$  SOS matrix $S$  and  polynomial matrices $P_{i}\in \mR[x]^{m\times \ell}$ $(i=1,\dots,t)$   with 
		\[
		\deg(S)\leq k,~ \deg(P_i^TGP_i)\leq k~(i=1,\dots,t), 
		\]
		such that
		\be  \label{mat:matr}
		F=S+P_1^TGP_1+\cdots +P_t^TGP_t.
		\ee
		In the above,  $\kappa, \eta$ are  respectively the {\L}ojasiewicz constant and exponent  in \reff{def:Loj:G4}, and  $C>0$ is a constant  independent of $F$, $G$. \rev{Furthermore, the {\L}ojasiewicz exponent can be estimated as 
			\[
			\eta\leq (3^{m-1}d_G+1)(3^{m}d_G)^{n+\theta(m)-2}.
			\]
		}
	\end{theorem}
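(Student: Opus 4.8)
The plan is to adapt the approach of Baldi and Mourrain \cite{lbbm,bmp} for the scalar Putinar Positivstellens\"atz to the matrix setting. This needs three main ingredients: a quantitative {\L}ojasiewicz inequality for the polynomial matrix $G$ on the scaled simplex $\bar{\Delta}$, a Markov-type gradient estimate controlling the variation of $F$, and a perturbation argument reducing the problem to an effective Positivstellens\"atz on the simplex. Throughout, every degree increase must be tracked explicitly in order to arrive at \reff{put3:degboun}.

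First I would establish the {\L}ojasiewicz inequality recorded in \reff{def:Loj:G4}, namely that there are $\kappa>0$ and $\eta\ge 1$ with
\[
\mathrm{dist}(x,K)^{\eta}\ \le\ \kappa\,\bigl(-\lambda_{\min}(G(x))\bigr)_{+}\qquad\text{for all }x\in\bar{\Delta}.
\]
The point is to make $\eta$ effective. To this end I would rewrite the spectrahedral condition $G(x)\succeq 0$ as a basic closed semialgebraic set cut out by \emph{scalar} polynomial inequalities, via a recursive Schur-complement / principal-submatrix construction: at each stage one peels off a block and introduces a fresh symmetric matrix of auxiliary variables for the Schur complement, so that after $m$ stages the scalarized description lives in $n+\theta(m)$ variables and has degree at most $3^{m-1}d_{G}$. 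The product in the definition \reff{int:num:scapol} of $\theta(m)$ is exactly the count of auxiliary coordinates produced by this nesting, each level on a $k\times k$ block contributing a factor $\tfrac{k(k+1)}{2}$, while the $3^{m-1}$ comes from the degree blow-up of the iterated elimination. Applying a known effective {\L}ojasiewicz inequality for scalar polynomial systems (of the shape $\mathcal{L}\le D(3D)^{N-1}$ with $N$ variables and degree $D$) to this scalarized system and projecting back to $x$-space then yields $\eta\le(3^{m-1}d_{G}+1)(3^{m}d_{G})^{n+\theta(m)-2}$. This is the step I expect to be the main obstacle, since the scalarization must simultaneously keep the degree and variable count under control (yielding $\theta(m)$ and $3^{m}d_{G}$), have an exponent that survives projection to the $x$-variables without further blow-up, and remain compatible with re-assembling a genuine matrix module decomposition $F=S+\sum_i P_i^{T}GP_i$ at the end.

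Next I would prove a matrix Markov estimate: expanding $F$ in the Bernstein basis on $\bar{\Delta}$ and invoking a Markov-type inequality on the simplex gives a Lipschitz bound for $x\mapsto\lambda_{\min}(F(x))$ on $\bar{\Delta}$ of the form $L_F\lesssim d^{2}\,\|F\|_{B}$ up to factors in $n$, hence $\lambda_{\min}(F(x))\ge F_{\min}-L_F\,\mathrm{dist}(x,K)$ on $\bar{\Delta}$. Combining this with the {\L}ojasiewicz inequality, I would choose a scalar $\lambda>0$, polynomial in $\|F\|_{B}/F_{\min}$, $\kappa$, $d$, $n$ with the dependence on the exponent governed by $\eta$, together with a polynomial-matrix multiplier built from the scalarized description of $G$ (rather than from $G$ directly, which is indefinite off $K$), so that $F$ minus the resulting element $\Theta\in\qmod{G}^{\ell}$ satisfies $F-\Theta\succeq\tfrac{F_{\min}}{2}I_{\ell}$ on all of $\bar{\Delta}$: near $K$ the correction $\Theta$ is small and $F$ is already almost positive, while far from $K$ the {\L}ojasiewicz inequality forces $\Theta$ to dominate the (at most $L_F\,\mathrm{dist}(x,K)$-sized) negative part of $F$. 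Finally I would apply the effective Positivstellens\"atz for polynomial matrices positive on the simplex, which for gap $\tfrac{F_{\min}}{2}$ yields an SOS representation in the generators $1+x_i$ and $\sqrt{n}-x_1-\cdots-x_n$ of degree polynomial in $\lambda\|F\|_{B}/F_{\min}$, $d$, $n$, and re-expand each simplex generator using the degree-$2$ identities $(1+x_i)I_{\ell}=\tfrac12\big((1+x_i)^2+\textstyle\sum_{j\ne i}x_j^2\big)I_\ell+\tfrac12(1-\|x\|_2^2)I_{\ell}$ and $(\sqrt n-\sum_i x_i)I_\ell=\tfrac{\sqrt n}{2}\sum_i(x_i-\tfrac1{\sqrt n})^2 I_\ell+\tfrac{\sqrt n}{2}(1-\|x\|_2^2)I_\ell$ together with the Archimedean hypothesis $(1-\|x\|_2^2)I_{\ell}\in\qmod{G}^{\ell}$; adding back $\Theta$ and tracking degrees through all steps produces \reff{put3:degboun}. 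The one remaining technical point is that, unlike the scalar case where $-g_i$ is automatically available off the feasible set, here the perturbing module element must be assembled from the {\L}ojasiewicz data rather than from $G$ itself, and the perturbation bound in the third step must be made uniform over all eigendirections of $F$.
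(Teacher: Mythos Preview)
Your overall strategy --- {\L}ojasiewicz inequality, Markov estimate, perturbation to positivity on the simplex, then Bernstein/P\'olya, then re-expansion via the Archimedean hypothesis --- matches the paper's. But the scalarization step, which you correctly flag as the main obstacle, is handled wrongly, and your reading of $\theta(m)$ is off. You propose introducing auxiliary symmetric-matrix variables for the Schur complements, so that the scalarized system lives in $n+\theta(m)$ variables; you then note, without resolving, that the {\L}ojasiewicz exponent must survive projection to $x$-space and that the resulting multipliers must be reassembled into a genuine $\qmod{G}^{\ell}$ decomposition in $x$ alone. Both of these are real obstructions, and in the paper $\theta(m)$ is not a count of auxiliary coordinates at all --- it is the number of \emph{scalar inequalities in the original variables $x$}.

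The paper avoids auxiliary variables entirely. It uses Schm\"udgen's recursive diagonalization (Theorem~\ref{thm:scapoly}, based on Proposition~\ref{finitebas}): for each choice of pivot one conjugates $G$ by a polynomial matrix $X_{-}^{(ij)}$ with entries in $\mathbb{R}[x]$, clearing denominators by multiplying through by the pivot so that $X_{-}^{(ij)}\, G\, (X_{-}^{(ij)})^{T}$ is block-diagonal with polynomial entries. Iterating over all pivot choices (to cover points where a given pivot vanishes) and recursing on the smaller block yields $\theta(m)$ scalar polynomials $d_{1},\dots,d_{\theta(m)}\in\mathbb{R}[x]$ of degree at most $3^{m-1}d_{G}$ which describe $K$ exactly and, crucially, lie in $\qmod{G}$ by construction (each is a diagonal entry of some $X^{T}GX$, hence of the form $v^{T}Gv$). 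One then applies the scalar-constraint matrix Positivstellens\"atz (Theorem~\ref{comp:matPut2}) to $F$ with the diagonal constraint matrix $\mathrm{diag}(d_{1},\dots,d_{\theta(m)})$, and the membership $d_{i}\in\qmod{G}_{3^{m-1}d_{G}}$ converts the resulting representation into $\qmod{G}^{\ell}$ with controlled degree loss. The {\L}ojasiewicz inequality \reff{def:Loj:G4} referenced in the statement is for this specific scalar system $d_{1},\dots,d_{\theta(m)}$, not for $(-\lambda_{\min}G)_{+}$ as you write, and the exponent bound $(3^{m-1}d_{G}+1)(3^{m}d_{G})^{n+\theta(m)-2}$ comes from Theorem~\ref{klpham} applied with $n$ variables and $s=\theta(m)$ inequalities of degree $3^{m-1}d_{G}$ --- not from a lifted $(n+\theta(m))$-dimensional system.
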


	The positivity certificate \reff{mat:matr} can be applied to construct matrix SOS relaxations for solving polynomial matrix optimization \cite{hdlb,schhol}.    As a byproduct of  Theorem \ref{intro:comp:matPut3}, we can obtain a polynomial bound on the convergence rate of matrix  SOS relaxations (see Theorem \ref{matrate}), which  resolves an open question raised in \cite{dihu}. 
	
	\rev{
		When $F(x)$ is a scalar polynomial and  $G(x)$ is   diagonal with diagonal entries $g_1(x), \dots, g_m(x)\in \mR[x]$, it was shown in \cite{bmp}  that if $F\geq F_{\min}>0$ on $K$, then $f$ admits a representation as in \reff{quamod} for 
		\[
		k\geq C\cdot 2^{\eta} \cdot n^2 m d_G^6  d^{2\eta} \kappa^7 \left(\frac{\|F\|_B}{F_{\min}}\right)^{7\eta+3},
		\]
		where $\kappa, \eta$ are respectively the {\L}ojasiewicz constant and exponent, and  $C>0$ is a constant  independent of $F$, $G$.
	Comparing this with  the bound as in Theorem \ref{intro:comp:matPut3}, one important   additional factor is  $3^{6(m-1)} \theta(m)^3$.  This comes  from  the   estimates on the quantity and degree bounds of scalar  polynomials  that can be used to describe the polynomial matrix inequality $G(x)\succeq 0$. Please see Section \ref{mat:deg} for details. The other additional factors such as ${8}^{7\eta}$,  $d^{14\eta}$ are	artifacts of the proof technique. They primarily arise from the degree estimates of  $h_ig_i$  in Theorem \ref{comp:matPut2}. Since we want  the constant $C$  to be independent of $F$ and $G$, we retain all these factors in Theorem \ref{intro:comp:matPut3}.

	~\\
	\noindent
	{\bf Remark.}
	The assumption that $(1-\|x\|_2^2)\cdot I_{\ell}\in \qmod{G}^{\ell}$ is not serious.  When the quadratic module $\qmod{G}^{\ell}$ is Archimedean, i.e., $(r-\|x\|_2^2)\cdot I_{\ell} \in \mathrm{QM}[G]^{\ell}$ for some $r>0$,  we  have that $(1-\|x\|_2^2)\cdot I_{\ell} \in \mathrm{QM}[G(\sqrt{r}x)]^{\ell}$.   Theorem \ref{intro:comp:matPut3} remains true with $F, G$ replaced by $F(\sqrt{r}x)$, $ G(\sqrt{r}x)$. 
	% On the another hand, we have introduced the additional term $ (1-\|x\|_2^2)S_1$  in  the representation \reff{mat:matr}. Without this term, the bound as in  \reff{put3:degboun} only differs by a constant,  which is  the degree bound on the representation  \reff{mat:put} that $ 1-\|x\|_2^2$ has. 
	~\\
	
	When $K$ is unbounded, the Putinar-type SOS representation   may not  hold. For the scalar case  (i.e.,  $F\in \mR[x]$  and $K$ is as in \reff{scalar:K}),  it was shown in \cite{putinar1999positive,putinar1999solving} that if the highest degree homogeneous term of $F$  is positive definite in $\re^n$ and $F >0$ on $K$, then there exists a degree $k$ such that 
	$(1+\|x\|_2^2)^kF $
	has a representation \reff{quamod}, 
	which is referred to as  Putinar-Vasilescu's Positivstellens{\"a}tz. A polynomial complexity  for this type of representation was proved by Mai and Magron   \cite{maim}. Recently, a matrix version of Putinar--Vasilescu's Positivstellens{\"a}tz was established in \cite{dihole}.
	
	In the following, we state a  bound similar to that in Theorem \ref{intro:comp:matPut3}  for matrix  Putinar--Vasilescu's Positivstellens{\"a}tz.
	Let $F^{hom}$  denote the highest degree homogeneous term of $F$.
	The homogenization of $F$ is denoted by $\widetilde{F}$, i.e., $\widetilde{F}(\tilde{x}):=x_0^{\deg(F)}F(x/x_0)$
	for $\tilde{x}:=(x_0,x)$.
	The notation $\lmd_{\min}(A)$ denotes the smallest eigenvalue  of the  matrix $A$.

	\begin{theorem}
		\label{int:comp:unb}
		Let $K$ be as in \reff{def:K} with $m\geq 2$.
		Suppose that $F(x)$ is an  $\ell$-by-$\ell$ symmetric polynomial matrix of degree $d$.
		If $F(x) \succ 0$ on $K$  and $F^{hom}\succ 0$ on $\mR^n$,  then for 
		\be \nonumber
		k\geq C \cdot  {8}^{7\eta}\cdot 3^{6(m-1)} (\theta(m)+2)^3(n+1)^2(\lceil d_G/2\rceil)^{6}\kappa^{7}d^{14\eta}\left(\frac{\|\widetilde{F}\|_B}{\widetilde{F}_{\min}}\right)^{7\eta+3},
		\ee
		there exists an $\ell$-by-$\ell$  SOS matrix $S$  and  polynomial matrices $P_{i}\in \mR[x]^{m\times \ell}$ $(i=1,\dots,t)$   with 
		\[
		\deg(S)\leq 2k+d,~ \deg(P_i^TGP_i)\leq 2k+d~(i=1,\dots,t), 
		\]
		such that
		\be  \label{mat:matr2}
		(1+\|x\|_2^2)^k F=S+P_1^TGP_1+\cdots + P_t^TGP_t.
		\ee
		In the above,  $\kappa$, $\eta$ are respectively the {\L}ojasiewicz constant and exponent  in \reff{def:Loj:G5}, $C>0$ is a constant independent of $F$, $G$, and  $\widetilde{F}_{\min}$
		is the optimal value of the optimization
		\be  \label{intp1}
		\left\{ \baray{rl}
		\min & \lmd_{\min}(\widetilde{F}(\tilde{x}))  \\
		\st &  	(x_0)^{2\lceil d_G/2\rceil-d_G}\widetilde{G}(\tilde{x})\succeq 0, \\
		&x_0^2 + x^Tx = 1.\\
		\earay \right.
		\ee
	\rev{Furthermore, the {\L}ojasiewicz exponent can be estimated as 
		\[
		\eta\leq (2\cdot3^{m-1}\lceil d_G/2\rceil+1)(2\cdot 3^{m}\lceil d_G/2\rceil)^{n+\theta(m)-1}.
		\]
	}	
	\end{theorem}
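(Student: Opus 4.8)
The plan is to homogenize the problem, apply Theorem~\ref{intro:comp:matPut3} on the unit sphere in $\mR^{n+1}$, and then dehomogenize. Write $\tilde x=(x_0,x)\in\mR^{n+1}$ and let $\mathbb S^n:=\{\tilde x\in\mR^{n+1}\mid x_0^2+x^Tx=1\}$, which is compact. Put $\bar G(\tilde x):=x_0^{\,2\lceil d_G/2\rceil-d_G}\widetilde G(\tilde x)$, a homogeneous symmetric matrix of even degree $2\lceil d_G/2\rceil$ with $\bar G(1,x)=G(x)$ and, for $x_0\neq0$, $\bar G(\tilde x)=x_0^{2\lceil d_G/2\rceil}G(x/x_0)$, so that $\bar G(\tilde x)\succeq0$ is equivalent to $x/x_0\in K$. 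Likewise $\widetilde F$ is homogeneous of degree $d$ with $\widetilde F(1,x)=F(x)$ and $\widetilde F(0,x)=F^{hom}(x)$. A first observation is that $F^{hom}\succ0$ on $\mR^n\setminus\{0\}$ forces $d=\deg F$ to be even — otherwise $F^{hom}(-x)=-F^{hom}(x)$ would be negative definite somewhere — so $\widetilde F$ is an even polynomial matrix and positivity of $\widetilde F$ on $\mathbb S^n$ is a meaningful condition. Splitting the feasible set of \eqref{intp1} into its parts with $x_0\neq0$ (use $\widetilde F(\tilde x)=x_0^{d}F(x/x_0)$ and the equivalence above) and $x_0=0$ (use $\widetilde F(0,x)=F^{hom}(x)$), one checks that $\widetilde F\succeq\widetilde F_{\min}\cdot I_\ell\succ0$ on that set, with $\widetilde F_{\min}>0$ its optimal value.

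Next I would apply the bounded matrix Positivstellensätz to the homogenized data. Form the block-diagonal constraint matrix $\hat G:=\mathrm{diag}\big(\bar G,\ 1-\|\tilde x\|_2^2,\ \|\tilde x\|_2^2-1\big)$ in the $n+1$ variables $\tilde x$. Then $\hat G\succeq0$ exactly cuts out the feasible set of \eqref{intp1}; the quadratic module $\mathrm{QM}[\hat G]^{\ell}$ is Archimedean, indeed $(1-\|\tilde x\|_2^2)\cdot I_\ell\in\mathrm{QM}[\hat G]^{\ell}$ via the scalar block $1-\|\tilde x\|_2^2$; and, by the estimates of Section~\ref{mat:deg} applied to the block structure, $\hat G\succeq0$ is described by $\theta(m)+2$ scalar polynomial inequalities of degree at most $2\cdot3^{m-1}\lceil d_G/2\rceil$ ($\theta(m)$ of them coming from $\bar G$, which now has degree $2\lceil d_G/2\rceil$, plus the two degree-$2$ sphere polynomials). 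Unpacking the bound of Theorem~\ref{intro:comp:matPut3} and applying it with these data amounts to the substitutions $n\mapsto n+1$, $d_G\mapsto2\lceil d_G/2\rceil$, $\theta(m)\mapsto\theta(m)+2$ (with $m$, hence $3^{6(m-1)}$, unchanged, and $\|F\|_B/F_{\min}\mapsto\|\widetilde F\|_B/\widetilde F_{\min}$, and $\kappa,\eta$ the {\L}ojasiewicz data \eqref{def:Loj:G5} of the homogenized system). This yields, for every degree $N$ above a threshold that — after absorbing the factor $(2\lceil d_G/2\rceil)^6=64\lceil d_G/2\rceil^6$ into the constant — is exactly the bound claimed in the theorem, a representation
\[
\widetilde F(\tilde x)=\Sigma(\tilde x)+\sum_i Q_i(\tilde x)^T\bar G(\tilde x)Q_i(\tilde x)+(\|\tilde x\|_2^2-1)\,\Psi(\tilde x)
\]
of degree $\leq N$ with $\Sigma$ an $\ell$-by-$\ell$ SOS matrix and $\Psi$ symmetric; and the exponent estimate $\eta\leq(2\cdot3^{m-1}\lceil d_G/2\rceil+1)(2\cdot3^{m}\lceil d_G/2\rceil)^{n+\theta(m)-1}$ is precisely what the estimate in Theorem~\ref{intro:comp:matPut3} gives under the same substitutions.

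It remains to dehomogenize. On $\mathbb S^n$ the last term vanishes, leaving $\widetilde F=\Sigma+\sum_iQ_i^T\bar GQ_i$ there. Averaging over $\tilde x\mapsto-\tilde x$ (which preserves the shape of the right-hand side since $\widetilde F$ and $\bar G$ are even) one may assume all data invariant under $\tilde x\mapsto-\tilde x$; collecting homogeneous parts, each SOS square and each $Q_i$ then splits into an even-degree and an odd-degree piece with the odd cross-terms cancelling. Using $\|\tilde x\|_2^2\equiv1$ on $\mathbb S^n$ to multiply homogeneous components by powers of $\|\tilde x\|_2^2$, one raises every square and every $Q_i^T\bar GQ_i$ to one common even degree $D\geq d$ and replaces $\widetilde F$ by $\|\tilde x\|_2^{D-d}\widetilde F$; since two homogeneous polynomials agreeing on $\mathbb S^n$ agree identically, this gives a genuine polynomial identity
\[
(x_0^2+\|x\|_2^2)^{(D-d)/2}\widetilde F(\tilde x)=\widehat\Sigma(\tilde x)+\sum_j\widehat Q_j(\tilde x)^T\bar G(\tilde x)\widehat Q_j(\tilde x),
\]
homogeneous of degree $D$, with $\widehat\Sigma$ a homogeneous SOS matrix and $\widehat Q_j$ homogeneous of degree $(D-2\lceil d_G/2\rceil)/2$. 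Setting $x_0=1$ and using $\widetilde F(1,x)=F(x)$, $\bar G(1,x)=G(x)$ gives $(1+\|x\|_2^2)^{(D-d)/2}F=S_0+\sum_jP_{j,0}^TGP_{j,0}$ with all degrees $\leq D$; finally, since $1+\|x\|_2^2$ is a sum of squares, multiplying through by $(1+\|x\|_2^2)^{k-(D-d)/2}$ — legitimate once $k\geq(D-d)/2$, which the threshold on $N\approx D$ guarantees after choosing the constant $C$ large enough — rescales the power to $k$ while keeping every degree at $D+2k-(D-d)=2k+d$. This is \eqref{mat:matr2}, with $\deg S\leq2k+d$ and $\deg(P_i^TGP_i)\leq2k+d$.

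The crux, and where I expect the genuine work to lie, is the dehomogenization: converting a possibly non-homogeneous matrix certificate on $\mathbb S^n$ into a bona fide homogeneous polynomial identity needs the symmetrization/parity-splitting device together with tight control of the degree growth so that one lands exactly at $2k+d$ and can readjust the power of $1+\|x\|_2^2$ to exactly $k$. A secondary but delicate point is propagating all constants and exponents through the substitutions $n\mapsto n+1$, $d_G\mapsto2\lceil d_G/2\rceil$, $\theta(m)\mapsto\theta(m)+2$ so that the final threshold matches the stated bound with a constant $C$ still independent of $F$ and $G$; the remaining steps (positivity of $\widetilde F$ on the feasible set of \eqref{intp1} and the Archimedean verification for $\hat G$) are routine.
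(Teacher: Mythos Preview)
Your plan is correct and matches the paper's overall structure: homogenize, apply the bounded matrix Positivstellens\"atz on the sphere in $\mR^{n+1}$, then dehomogenize; the constant tracking ($n\mapsto n+1$, $d_G\mapsto 2\lceil d_G/2\rceil$, $\theta(m)\mapsto\theta(m)+2$ with $m$ itself fixed) and the resulting {\L}ojasiewicz-exponent estimate are exactly as in the paper.

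The only genuine difference is the dehomogenization device. The paper substitutes $\tilde x=(1,x)/\sqrt{1+\|x\|_2^2}$ directly into the certificate $\widetilde F=\Sigma+\sum_iQ_i^T\bar GQ_i+(\|\tilde x\|_2^2-1)\Psi$, writes each factor $V$ (with $\Sigma=V^TV$) and each $P_i$ as $V^{(1)}(x)+\sqrt{1+\|x\|_2^2}\,V^{(2)}(x)$ along degree parity, multiplies through by $(1+\|x\|_2^2)^k$ to clear denominators, and then kills the remaining $\sqrt{1+\|x\|_2^2}$-cross-terms by observing that $\sqrt{1+\|x\|_2^2}$ is not a rational function. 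Your symmetrize-then-homogenize-to-common-degree argument is an algebraically equivalent repackaging of the same parity splitting: your $V_e,V_o$ are the paper's $V^{(1)},V^{(2)}$ read in homogeneous coordinates, and the step ``two homogeneous forms of the same degree agreeing on $\mathbb S^n$ agree identically'' plays exactly the role of the irrationality argument. The paper's route is a little shorter, since the exponent $k$ appears directly as the power needed to clear denominators and no final multiplication by $(1+\|x\|_2^2)^{k-(D-d)/2}$ is required; your route makes the degree landing at $2k+d$ more transparent.
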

	
	\rev{
	For  unbounded optimization,	the positivity of $F^{\hom}$  is also crucial for obtaining SOS-type representations because  it reflects the behavior of $F$ at infinity (e.g., see \cite{hny1}). The quantity $\widetilde{F}_{\min}$ is the optimal value of the homogenized optimization (1.15). It is an important quantity for evaluating the positivity of $F$ on the unbounded set $K$, as it reflects not only the behavior of $F$ on $K$ but also its behavior at infinity. 
	Suppose that $F(x) \succeq F_{\min}\cdot I_{\ell}\succ 0$ on $K$, $F^{hom}\succeq F_{\min}^{\prime}\cdot I_{\ell}\succ 0$ on $\mR^n$, and $\tilde{x}=(x_0,x)\in \mR^{n+1}$ is a feasible point of (1.15).  One can see that $x/x_0\in K$ and  $\widetilde{F}(\tilde{x})=x_0^dF(x/x_0)\succeq x_0^d F_{\min}\cdot I_{\ell}$ for all feasible $\tilde{x}$ with $x_0\neq 0$, and $\widetilde{F}(\tilde{x})\succeq  F^{\prime}_{\min}\cdot I_{\ell}$ for all feasible $\tilde{x}$ with $x_0= 0$. 
	%One may want to lower bound $\widetilde{F}_{\min}$ by $F^{hom}$ and  $F_{\min}^{\prime}$. 
	However, it is impossible to  bound $\widetilde{F}_{\min}$ from below with $F^{hom}$ and  $F_{\min}^{\prime}$, since $\widetilde{F}_{\min}$ can be arbitrarily close to $0$. For instance, consider the scalar polynomial $F(x)=(x-\epsilon)^2+1$ and $K=\mR$.  Note that for $x_0^2+x^2=1$, we have $\widetilde{F}(x_0,x)=(x-\epsilon \cdot x_0)^2+x_0^2=1+\epsilon^2\cdot x_0^2-2\epsilon \cdot x_0\cdot x$. Let $x_0=\sin \theta$, $x=\cos \theta$. Then,  it follows that
	\[
	\baray{ll}
	\widetilde{F}(x_0,x)=1+\epsilon^2\cdot\sin^2\theta -2\cdot \epsilon \cdot \sin \theta \cos \theta&=1+\frac{\epsilon^2}{2}\cdot (1- \cos (2\theta))-  \epsilon \cdot \sin (2\theta)\\
	&=1+\frac{\epsilon^2}{2}-\sqrt{\epsilon^2+\frac{\epsilon^4}{4}}\sin(2\theta+\phi),
	\earay
	\]
	for some $\phi\in [0,2\pi]$. This implies that $\widetilde{F}_{\min}=1+\frac{\epsilon^2}{2}-\sqrt{\epsilon^2+\frac{\epsilon^4}{4}}$. 
	One can  verify that
	$1+\frac{\epsilon^2}{2}-\sqrt{\epsilon^2+\frac{\epsilon^4}{4}}\rightarrow 0$, as $\epsilon \rightarrow \infty$, while $F_{\min}=F_{\min}^{\prime}=1$ for arbitrary $\epsilon$.
	
	}
	
	The following result  is a direct corollary of Theorem \ref{int:comp:unb}. Its proof is deferred to  Section \ref{aaaaaa}. 
\begin{corollary}
	\label{comp:unb2}
	Let $K$ be as in \reff{def:K} with $m\geq 2$.	Suppose that $F(x)$ is an  $\ell$-by-$\ell$ symmetric polynomial matrix with $\deg(F)=d$.
	If $F\succeq 0$ on $K$,  then  for any $\varepsilon>0$, we have
	\[
	(1+\|x\|_2^2)^k(F+\varepsilon \cdot (1+\|x\|_2^2)^{ \lceil \frac{d+1}{2}\rceil}I_{\ell}) \in \qmod{G}_k^{\ell},
	\]
	provided  that $
	k\geq C \cdot  \varepsilon^{-7\eta-3}.
	$
	%	\[
	%	k\geq C \cdot  {8}^{7\eta} (\theta(m)+2)^3(n+1)^2(2\lceil d_G/2\rceil)^{18(m-1)}\kappa^{7}(2\lceil \frac{d+1}{2}\rceil)^{14\eta}\left(\frac{\|\widetilde{F}\|_B}{\varepsilon}\right)^{7\eta+3}.
	%	\]
	Here,  $\eta$ is the {\L}ojasiewicz  exponent  in \reff{def:Loj:G5}, and $C>0$ is a constant that depends on   $F$, $G$.
\end{corollary}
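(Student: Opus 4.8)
The plan is to deduce Corollary~\ref{comp:unb2} from Theorem~\ref{int:comp:unb} applied to the perturbed matrix
\[
F_{\varepsilon}\ :=\ F+\varepsilon\cdot(1+\|x\|_2^2)^{\lceil\frac{d+1}{2}\rceil}I_{\ell},
\]
and then tracking how the degree bound of that theorem depends on $\varepsilon$. First I would check the hypotheses of Theorem~\ref{int:comp:unb} for $F_{\varepsilon}$. Put $d':=\deg(F_{\varepsilon})=2\lceil\frac{d+1}{2}\rceil$; then $d'$ is \emph{even} and $d'>d$, since $2\lceil\frac{d+1}{2}\rceil\geq d+1$. Consequently the leading homogeneous part of $F_{\varepsilon}$ is exactly $\varepsilon(x_1^2+\cdots+x_n^2)^{\lceil\frac{d+1}{2}\rceil}I_{\ell}$, which is positive definite on $\mR^n\setminus\{0\}$, so $F_{\varepsilon}^{hom}\succ0$ on $\mR^n$; and since $1+\|x\|_2^2\geq1$ and $F\succeq0$ on $K$, we get $F_{\varepsilon}\succeq F+\varepsilon I_{\ell}\succeq\varepsilon I_{\ell}\succ0$ on $K$. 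Thus Theorem~\ref{int:comp:unb} applies to $F_{\varepsilon}$, and the {\L}ojasiewicz exponent $\eta$ in \reff{def:Loj:G5} is the one appearing in the corollary, as it depends only on $G$.

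The main task is then to estimate the two $F$-dependent quantities in the bound of Theorem~\ref{int:comp:unb} evaluated at $F_{\varepsilon}$. Homogenization gives
\[
\widetilde{F_{\varepsilon}}(\tilde x)\ =\ x_0^{\,d'-d}\,\widetilde{F}(\tilde x)+\varepsilon(x_0^2+\|x\|_2^2)^{\lceil\frac{d+1}{2}\rceil}I_{\ell},
\]
so by the triangle inequality for the Bernstein norm, $\|\widetilde{F_{\varepsilon}}\|_B\leq\|x_0^{\,d'-d}\widetilde{F}\|_B+\varepsilon\|(x_0^2+\|x\|_2^2)^{\lceil\frac{d+1}{2}\rceil}I_{\ell}\|_B=:a+\varepsilon b$, where $a,b>0$ depend only on $F$, $n$, $d$. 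For the denominator $(\widetilde{F_{\varepsilon}})_{\min}$ (the value of \reff{intp1} with $\widetilde F$ replaced by $\widetilde{F_{\varepsilon}}$) I would prove $(\widetilde{F_{\varepsilon}})_{\min}\geq\varepsilon$: on the sphere $x_0^2+\|x\|_2^2=1$ the perturbation term equals $\varepsilon I_{\ell}$, so it suffices to check $x_0^{\,d'-d}\widetilde{F}(\tilde x)\succeq0$ at every feasible point of \reff{intp1}. If $x_0\neq0$, the constraint $(x_0)^{2\lceil d_G/2\rceil-d_G}\widetilde{G}(\tilde x)\succeq0$ reads $x_0^{2\lceil d_G/2\rceil}G(x/x_0)\succeq0$, forcing $x/x_0\in K$, so $F(x/x_0)\succeq0$ and $x_0^{\,d'-d}\widetilde{F}(\tilde x)=x_0^{\,d'}F(x/x_0)\succeq0$ because $d'$ is even; if $x_0=0$, then $x_0^{\,d'-d}\widetilde{F}(\tilde x)=0$ since $d'-d\geq1$. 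Hence $\lambda_{\min}(\widetilde{F_{\varepsilon}}(\tilde x))\geq\varepsilon$ on the feasible set.

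Finally, $\|\widetilde{F_{\varepsilon}}\|_B/(\widetilde{F_{\varepsilon}})_{\min}\leq(a+\varepsilon b)/\varepsilon\leq(a+b)\varepsilon^{-1}$ for $\varepsilon\in(0,1]$, and this ratio remains bounded for $\varepsilon>1$. Since the remaining factors in the bound of Theorem~\ref{int:comp:unb} for $F_{\varepsilon}$ --- namely $8^{7\eta}$, $3^{6(m-1)}$, $(\theta(m)+2)^3$, $(n+1)^2$, $(\lceil d_G/2\rceil)^6$, $\kappa^7$, and $(d')^{14\eta}$ --- all depend only on $F$ and $G$, one concludes that $(1+\|x\|_2^2)^kF_{\varepsilon}=S+P_1^TGP_1+\cdots+P_t^TGP_t$ with the degree control of \reff{mat:matr2}, i.e.\ $(1+\|x\|_2^2)^k\big(F+\varepsilon(1+\|x\|_2^2)^{\lceil\frac{d+1}{2}\rceil}I_{\ell}\big)\in\qmod{G}_k^{\ell}$, as soon as $k\geq C\varepsilon^{-7\eta-3}$, where $C$ depends only on $F$, $G$ (it absorbs the above factors, $(a+b)^{7\eta+3}$, and --- enlarged if needed --- the bounded range $\varepsilon>1$). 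The only step that is not routine is the inequality $(\widetilde{F_{\varepsilon}})_{\min}\geq\varepsilon$: it is precisely what dictates the exponent $\lceil\frac{d+1}{2}\rceil$ in the perturbation, since one needs $d'$ even to get $x_0^{\,d'}F(x/x_0)\succeq0$ and $d'>d$ so that the slice $\{x_0=0\}$ of the feasible set contributes nothing.
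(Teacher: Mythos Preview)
Your proposal is correct and follows essentially the same approach as the paper: apply Theorem~\ref{int:comp:unb} to $F_\varepsilon$, with the key step being the lower bound $(\widetilde{F_\varepsilon})_{\min}\geq\varepsilon$ obtained from $x_0^{d'-d}\widetilde{F}\succeq 0$ on $\widetilde{K}$ (using that $d'$ is even and $d'>d$). The paper's proof is terser---it records only the inequality $x_0^{2\lceil\frac{d+1}{2}\rceil-d}\widetilde{F}+\varepsilon(x_0^2+\|x\|_2^2)^{\lceil\frac{d+1}{2}\rceil}I_\ell\succeq\varepsilon I_\ell$ on $\widetilde{K}$ and then defers to Theorem~\ref{comp:unb}---whereas you additionally spell out the verification of the hypotheses, the bound on $\|\widetilde{F_\varepsilon}\|_B$, and the absorption of constants into $C$, all of which the paper leaves implicit.
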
	

}

	\subsection*{Outline of the proofs}
	The proof of  Theorem \ref{intro:comp:matPut3} involves three major steps:
	
	{\it Step 1.} When $F(x)$ is an $\ell$-by-$\ell$ symmetric polynomial matrix and $K$ is described by finitely many scalar polynomial inequalities  (i.e., $K$ is as in \reff{scalar:K} ),
	we establish a polynomial bound on degrees of terms appearing in  matrix  Putinar's   Positivstellens{\"a}tz   (see Theorem \ref{comp:matPut2}). It improves the  exponential bound given by Helton and Nie \cite{helnie}.  \rev{The proof closely follows the  idea and strategies of \cite{lbbm,bmp}, adapting it to the matrix setting. This proof is provided in  Section \ref{Matrix:poly}  and is completed in three steps:}
	
	\rev{
		{\it Step 1a.}  We perturb $F$ into a polynomial matrix $P$ such that $P$ is  positive definite on the scaled simplex $\bar{\Delta}$. To be more specific, we prove that there exist $\lambda>0$ and   $h_i\in \Sigma[x]$ $(i=1,\dots,m)$  with controlled degrees   such that 
		 \[
		P(x):= F(x)-\lambda \sum_{i=1}^m h_i(x) g_i(x) I_{\ell}\succeq \frac{1}{4} F_{\min} \cdot I_{\ell},~~\forall~x\in \bar{\Delta}.
		\]
	This step uses	a result by Baldi-Mourrain-Parusinski \cite[Proposition 3.2]{bmp} about the approximation of a plateau by SOS polynomials, and some estimates on the matrix Bernstein norm (see Lemma \ref{lemma3.3}).

	{\it Step 1b.} Using the dehomogenized version of matrix  Polya's Positivstellens{\"a}tz in terms of the Bernstein basis (see Lemma \ref{mat:polya:sca}), we show that there exist matrices $P_{ \alpha}^{(k)} \succ 0$ such that
	\[
	P=\sum_{\alpha \in \mathbb{N}^n_k} P_{ \alpha}^{(k)} B_{ \alpha}^{k}(x),
	\]
	and  provide an estimate on $k$.

   {\it Step 1c.} Note that the factors of $B_{ \alpha}^{k}(x)$ satisfy $1-x_1,\dots,1-x_n,\sqrt{n}-x_1-\cdots-x_n\in \qmod{1-\|x\|_2^2}_2$. We know  that $P\in \qmod{1-\|x\|_2^2}^{\ell}$. Consequently,     a polynomial bound on the degrees of terms appearing in  matrix  Putinar's   Positivstellens{\"a}tz  can be derived from the assumption that  $ (1-\|x\|_2^2)\cdot I_{\ell}\in \qmod{G}^{\ell}$.

	}

	{\it Step 2.} When $K$ is described by a    polynomial matrix inequality (i.e., $K$ is as in \reff{def:K}), we show that $K$ can be equivalently described by finitely many scalar polynomial  inequalities with exact estimates on the quantity and degrees, by exploiting a proposition due to Schm{\"u}dgen \cite{sk09}. Additionally,  these polynomials belong to the quadratic module generated by $G(x)$ in $\mR[x]$. This is discussed in
	Section~\ref{mat:deg}.
	
	{\it Step 3.}  Finally, we prove Theorem \ref{intro:comp:matPut3}   by applying  the main result  from {\it Step 1} (i.e., Theorem \ref{comp:matPut2})  to $F$ and the equivalent scalar polynomial  inequalities for $K$ obtained in {\it Step 2}.  We refer to Section \ref{mat:mat} for details.

	The proof of Theorem \ref{int:comp:unb}  relies on Theorem \ref{intro:comp:matPut3} and  homogenization techniques. 
	
	%This paper is organized as follows.
	The remaining of the paper is organized as follows. 
	Section~\ref{sc:pre}  presents some backgrounds. 
	Section~\ref{mat:mat} provides the proofs of Theorems \ref{intro:comp:matPut3}, \ref{int:comp:unb} and  studies  the  convergence rate of  matrix SOS relaxations. Conclusions and discussions are drawn  in Section \ref{sec:dis}.

	\section{Preliminaries}
	
	\label{sc:pre}
	
	\subsection{Notations}
	The symbol $\mathbb{N}$ (resp., $\mathbb{R}$) denotes the set of nonnegative integers (resp., real numbers). Let $\mathbb{R}[x]:=\mathbb{R}\left[x_1, \ldots, x_n\right]$ be the ring of polynomials in $x:=\left(x_1, \ldots, x_n\right)$ with real coefficients and let $\mathbb{R}[x]_d$ be the ring of polynomials with degrees $\leq d$.  For $\af = (\af_1, \ldots, \af_n) \in \N^n$ and a degree $d$,
	denote $|\af| \coloneqq \af_1 + \cdots + \af_n$,
	$
	\mathbb{N}_d^n:=\left\{\alpha \in \mathbb{N}^n \mid |\alpha| \leq d\right\}.
	$
	For a polynomial $p$, the symbol $\deg(p)$ denotes its total degree. For a polynomial matrix $P=(P_{ij}(x))\in \mR[x]^{\ell_1\times \ell_2}$, its degree  is  denoted by $\deg(P)$, i.e.,
	\[
	\deg(P):=\max\{\deg(P_{ij}(x)),~1\leq i\leq \ell_1,1\leq j\leq \ell_2\}.
	\] 
	For a real number $t,\lceil t\rceil$ denotes the smallest integer  greater than or equal to $t$. For a matrix $A$, $A^{T}$ denotes its transpose and $\|A\|_2$ denotes the spectral norm of  $A$.  
	%For  matrices $W$, $Y$, their inner product is denoted by $\langle W, Y\rangle:=\operatorname{tr}(W^T Y)$.  
	A symmetric matrix $X \succeq 0$ (resp., $X \succ 0$)  if $X$ is positive semidefinite (resp., positive definite).  
	For a smooth function $p$ in $x$, its gradient  is denoted by $\nabla p$.

	% $ \alpha !=\alpha_{1} ! \cdots \alpha_{m} !$ 
	%If $x$ is a subvector of its variables,
	%then $\nabla_{x}p$  denotes its gradient
	%with respect to $x$. If $x,y$ are two subvectors of its variables,
	%then  denote the  Hessian
	%with respect to $x=(x_{i_1},\dots,x_{i_{\ell_1}}),y=(x_{j_1},\dots,x_{\ell_2})$  by $\nabla_{x,y}^2 p$, i.e.,
	%\[
	%\nabla_{x,y}^2 p=\left(\nabla_{x_{i_t}y_{i_k}}^2p\right)_{t=1,\dots,\ell_1,k=1,\dots,\ell_2}.
	%\]  
	
	\subsection{Some basics in real algebraic geometry} \label{sec:qm}
	
	In this subsection, we review some basics in real algebraic geometry. We refer to \cite{cj,hdlb,kism,kojm,Lau09,niebook,schhol,sk09} for more details. 
	
	A polynomial matrix $S$ is said to be a sum of squares (SOS) if $S=P^TP$ for some polynomial matrix $P(x)$.
	The set of all $\ell$-by-$\ell$  SOS polynomial matrices in $x$ is denoted as $\Sigma[x]^{\ell}$. 
	For a  degree $k$, denote the truncation
	\[
	\Sigma[x]_{k}^{\ell} \, \coloneqq  \,  \left\{
	\sum_{i=1}^{t} P_{i}^T P_{i}   \mid
	t \in \mathbb{N}, ~ P_{i} \in \mathbb{R}[x]^{\ell \times \ell},~ 2\deg(P_{i})\leq k\right\}.
	\]
	For an $m$-by-$m$   symmetric  polynomial matrix $G$, the  quadratic module of $\ell$-by-$\ell$  polynomial matrices generated
	by  $G(x)$ in $\mR[x]^{\ell\times \ell}$ is
	\be \nonumber 
	\qmod{G}^{\ell}=\left\{
	S+\sum_{i=1}^t P_i^TGP_i \mid t\in \mathbb{N},~ S \in \Sigma[x]^{\ell},~P_i \in \mathbb{R}[x]^{m\times \ell}
	\right\}.
	\ee
	The $k$th degree truncation of $\qmod{G}^{\ell}$ is
	\be
	\qmod{G}_{k}^{\ell} = \left\{
	S+\sum_{i=1}^t P_i^TGP_i \mid t\in \mathbb{N},~ S \in \Sigma[x]^{\ell}_k,~P_i \in \mathbb{R}[x]_{\lceil k/2- \deg(G)/2\rceil }^{m\times \ell}
	\right\}.
	\ee
	Note that $\Sigma[x]^{\ell} \subseteq \qmod{G}^{\ell}$.
	For the case $\ell=1$, we  simply denote 
	\[
	\Sigma[x]_k:=\Sigma[x]_k^{1},~\rev{~\qmod{G}:=\qmod{G}^{1}},~\qmod{G}_k:=\qmod{G}_k^{1}.
	\]

	The  quadratic module $\mathrm{QM}[G]^{\ell}$ is said to be Archimedean if  there exists  $r>0$ such that
	$
	\left(r-\|x\|_2^2\right) \cdot I_{\ell} \in \mathrm{QM}[G]^{\ell} .
	$
	The polynomial matrix $G$ determines the semialgebraic set
	$$
	K:=\left\{x \in \mathbb{R}^n \mid G(x) \succeq 0 \right\} .
	$$
	If $\qmod{G}^{\ell}$ is Archimedean, then the set $K$ must be compact. However, when $K$ is compact, \rev{it is not necessarily that $\qmod{G}^{\ell}$ is Archimedean (see \cite{JPre}).}  The following is   known as the matrix-version of Putinar's Positivstellens{\"a}tz.
	
	\begin{theorem}[\cite{kojm,schhol}]	
		Suppose that $G$ is an $m$-by-$m$  symmetric polynomial matrix  and $\qmod{G}^{\ell}$ is Archimedean. If the $\ell$-by-$\ell$  symmetric polynomial matrix $F\succ 0$  on $K$, then we have $F \in \qmod{G}^{\ell}$.
	\end{theorem}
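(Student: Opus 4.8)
The plan is to adapt Putinar's original functional-analytic argument \cite{putinar1993positive} to the matrix setting, along the lines of \cite{kojm,schhol}. Since the Archimedean assumption gives $(r-\|x\|_2^2)\cdot I_{\ell}\in\qmod{G}^{\ell}$ for some $r>0$, the set $K$ is compact, and as $F$ is continuous with $F\succ0$ on $K$ the function $x\mapsto\lambda_{\min}(F(x))$ attains a positive minimum there; hence $F\succeq\varepsilon\cdot I_{\ell}$ on $K$ for some $\varepsilon>0$. Suppose, for contradiction, that $F\notin\qmod{G}^{\ell}$. Work in the real vector space $V$ of symmetric $\ell$-by-$\ell$ polynomial matrices, in which $\qmod{G}^{\ell}$ is a convex cone. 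For $k$ large (so that $\deg(F)\leq 2k$), the truncation $\qmod{G}^{\ell}_{2k}$ is a closed convex cone in the finite-dimensional space $V\cap\mathbb{R}[x]^{\ell\times\ell}_{2k}$: closedness follows from the Archimedean property, which bounds the SOS and localizing terms in a representation of fixed degree, exactly as in the scalar case. Since $F\notin\qmod{G}^{\ell}_{2k}$, separation yields a linear functional $L_k$ on $V\cap\mathbb{R}[x]^{\ell\times\ell}_{2k}$ with $L_k\geq0$ on $\qmod{G}^{\ell}_{2k}$ and $L_k(F)<0$; a standard extension step, again using the Archimedean element to propagate positivity to higher degrees, produces a linear functional $L$ on all of $V$ with $L\geq0$ on $\qmod{G}^{\ell}$ and $L(F)<0$.

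Next I would represent $L$ by a measure. Because $L\geq0$ on $\Sigma[x]^{\ell}$, evaluating $L$ on outer products $H^{T}H$ as $H$ ranges over polynomial row-vectors with entries in $\mathbb{R}[x]^{\ell}$ shows that the associated (operator-valued) moment sequence is positive semidefinite; since the Archimedean bound provides an effective compact-support estimate, a matrix Hausdorff-type moment theorem yields a positive-semidefinite $\ell$-by-$\ell$ matrix-valued Borel measure $\Phi$ with compact support such that $L(A)=\int\langle A(x),d\Phi(x)\rangle$ for all $A\in V$, where $\langle A,B\rangle=\mathrm{tr}(AB)$.

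The crux is to show $\supp{\Phi}\subseteq K$, and here the matrix localizing condition $L(P^{T}GP)\geq0$ for all $P\in\mathbb{R}[x]^{m\times\ell}$ is used. If some $x_0\in\supp{\Phi}$ lay outside $K$, choose a unit vector $w\in\mathbb{R}^m$ with $w^{T}G(x_0)w<0$; by continuity $w^{T}G(x)w\leq -c<0$ on a neighbourhood $U$ of $x_0$, and $\Phi(U)\neq0$, so there is $u\in\mathbb{R}^{\ell}$ with $u^{T}\Phi(U)u>0$. Taking $P(x)=\phi_j(x)\,w u^{T}$ with $\phi_j$ polynomials approximating the indicator of a small bump around $x_0$ supported in $U$, one gets $L(P^{T}GP)=\int\phi_j^{2}(x)\,(w^{T}G(x)w)\,d(u^{T}\Phi u)(x)$, which tends to a strictly negative limit, contradicting $L\geq0$ on $\qmod{G}^{\ell}$. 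Hence $\supp{\Phi}\subseteq K$. Finally, $F\succeq\varepsilon\cdot I_{\ell}$ on $K\supseteq\supp{\Phi}$ together with $d\Phi\succeq0$ gives $L(F)=\int\langle F(x),d\Phi(x)\rangle\geq0$, contradicting $L(F)<0$; therefore $F\in\qmod{G}^{\ell}$.

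I expect the main obstacles to be the representation step — a matrix-valued moment theorem valid under the Archimedean bound — and the support-localization step, where the matrix localizing condition must be converted, via a careful approximation of bump functions by polynomial matrices, into the statement that the representing measure charges only $K$; the closedness of the truncated matrix quadratic modules is routine but also hinges on the Archimedean hypothesis. As an alternative, one could instead reduce to the scalar Putinar Positivstellensatz \reff{quamod} after replacing the polynomial matrix inequality $G\succeq0$ by an equivalent finite system of scalar inequalities whose localizers lie in $\qmod{G}^{\ell}$ (in the spirit of the construction in Section~\ref{mat:deg}), but the functional-analytic route above is self-contained.
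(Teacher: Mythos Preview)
The paper does not prove this theorem; it is quoted in Section~\ref{sec:qm} as a known result from \cite{kojm,schhol} and used as background. There is therefore no ``paper's own proof'' to compare against for this particular statement.

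That said, the paper's main theorems (Theorems~\ref{intro:comp:matPut3} and~\ref{comp:matPut2}) do imply this qualitative statement as a corollary, and they proceed by a completely different, constructive route: reduce the matrix constraint $G\succeq0$ to finitely many scalar inequalities lying in $\qmod{G}$ (Theorem~\ref{thm:scapoly}), then build an explicit representation via a matrix P\'olya-type positivity on the scaled simplex (Lemma~\ref{mat:polya:sca}) combined with an SOS plateau approximation. This yields not just membership in $\qmod{G}^{\ell}$ but an explicit degree bound. Your functional-analytic separation argument is the classical, non-effective route taken in the cited references; it is correct in outline and more conceptual, but gives no quantitative information. You even flag the scalar-reduction alternative yourself at the end --- that is precisely the path the paper develops in Sections~\ref{Matrix:poly}--\ref{mat:mat}.

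On the soundness of your sketch: the overall architecture is right, but two steps deserve more care than you indicate. First, closedness of the truncated cone $\qmod{G}^{\ell}_{2k}$ is not automatic from Archimedeanity alone; one typically shows that the Archimedean element forces the unit $I_\ell$ to be an interior point of the cone (or argues via a spectrahedral description), and this is what delivers both closedness and the extension of $L_k$ to a full $L$. Second, the ``matrix Hausdorff-type moment theorem'' you invoke is genuinely the hard analytic input --- in \cite{schhol} this is handled by a GNS-type construction producing commuting bounded self-adjoint operators from the Archimedean bound, then applying the spectral theorem, rather than by citing a ready-made matrix moment theorem. Your support-localization argument is fine in spirit.
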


	\subsection{Some technical propositions} \label{sec:qm1}
	In this subsection, we present several useful propositions that are used in our proofs.
	The following is the classical {\L}ojasiewicz inequality. 
	
	\begin{corollary}[\cite{bcr,LojS}]
		\label{loj}
		Let $A$ be a closed and bounded semialgebraic set, and  let $f$ and  $g$  two continuous semialgebraic functions from $A$ to $\mR$ such that $f^{-1}(0) \subseteq g^{-1}(0)$. Then there exist two positive constants $\eta$, $\kappa$ such that:
		$$
		|g(x)|^{\eta} \leq \kappa |f(x)|,~\forall x \in A .
		$$
	\end{corollary}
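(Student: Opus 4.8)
This is the classical {\L}ojasiewicz inequality for semialgebraic functions, and my plan is to reduce it to the one-dimensional structure theory of semialgebraic functions by introducing an ``envelope'' of $|g|$ in the value of $|f|$. Define the control function
\[
\theta(t)\,:=\,\sup\bigl\{\,|g(x)|\ :\ x\in A,\ |f(x)|\le t\,\bigr\},\qquad t\ge 0.
\]
I would show that $\theta$ is a bounded, non-decreasing, semialgebraic function of the single variable $t$ with $\theta(t)\to 0$ as $t\to 0^{+}$; granting this, a semialgebraic function of one variable that tends to $0$ at the origin is dominated near $0$ by a power $t^{\alpha}$ with $\alpha>0$, and the inequality for $f,g$ then follows by substituting $t=|f(x)|$.

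The elementary properties of $\theta$ come first. Since $A$ is closed and bounded, hence compact, and $f,g$ are continuous, the supremum is finite and attained, $\theta$ is bounded above by $M:=\max_{A}|g|$, and $\theta$ is visibly non-decreasing. Semialgebraicity follows from Tarski--Seidenberg: the graph $\{(t,s):s=\theta(t)\}$ is defined by a first-order formula over the semialgebraic data $A,f,g$ — namely that $s$ is an upper bound for $|g|$ on $\{x\in A:|f(x)|\le t\}$ and that no smaller number is — hence it is semialgebraic, so $\theta$ is a semialgebraic function. The crucial point is that $\theta(t)\to 0$ as $t\to 0^{+}$: otherwise, by monotonicity, $\theta(1/k)\ge\varepsilon$ for some $\varepsilon>0$ and all $k$, so there are $x_{k}\in A$ with $|f(x_{k})|\le 1/k$ and $|g(x_{k})|>\varepsilon/2$; a convergent subsequence $x_{k}\to x^{*}\in A$ gives, by continuity, $f(x^{*})=0$ and $|g(x^{*})|\ge\varepsilon/2>0$, contradicting $f^{-1}(0)\subseteq g^{-1}(0)$. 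This compactness argument is the only place the hypothesis $f^{-1}(0)\subseteq g^{-1}(0)$ is used.

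Next I would invoke the one-variable machinery. By the monotonicity theorem there is $\delta>0$ on which $\theta$ is continuous, and by the Puiseux expansion of the branch $t\mapsto\theta(t)$ near $0$ either $\theta\equiv 0$ on $(0,\delta]$, in which case $g$ vanishes on $\{x\in A:|f(x)|\le\delta\}$ and the desired inequality is trivial there, or $\theta(t)\le C\,t^{\alpha}$ on $[0,\delta]$ for some $C>0$ and $\alpha>0$ (after shrinking $\delta$ if necessary). In the second case, for every $x\in A$ with $|f(x)|\le\delta$ we get $|g(x)|\le\theta(|f(x)|)\le C\,|f(x)|^{\alpha}$, hence
\[
|g(x)|^{1/\alpha}\ \le\ C^{1/\alpha}\,|f(x)|\qquad\text{whenever }x\in A,\ |f(x)|\le\delta .
\]
On the remaining region $\{x\in A:|f(x)|>\delta\}$ I bound crudely $|g(x)|^{1/\alpha}\le M^{1/\alpha}\le (M^{1/\alpha}/\delta)\,|f(x)|$. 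Taking $\eta:=1/\alpha$ and $\kappa:=\max\{C^{1/\alpha},\,M^{1/\alpha}/\delta\}$ (and $\eta:=1$, $\kappa:=M/\delta$ in the degenerate case) yields $|g(x)|^{\eta}\le\kappa\,|f(x)|$ on all of $A$.

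The real obstacle is conceptual, not computational: it lies in setting up $\theta$ and proving its two decisive features — genuine semialgebraicity (a quantifier-elimination argument) and the limit $\theta(0^{+})=0$ (the compactness argument, which is exactly where the zero-set inclusion enters) — after which the standard one-variable tools (monotonicity theorem and Puiseux expansion) finish the job mechanically. An alternative, equally classical route bypasses $\theta$ and argues by contradiction: if the inequality failed for every $\eta,\kappa$, the curve selection lemma would produce a semialgebraic analytic arc $\gamma(s)$ with $\gamma(0)\in f^{-1}(0)\subseteq g^{-1}(0)$ along which $|g(\gamma(s))|^{\eta}/|f(\gamma(s))|\to\infty$ for all $\eta$, which is impossible once one compares the leading orders of the Puiseux expansions of $s\mapsto f(\gamma(s))$ and $s\mapsto g(\gamma(s))$. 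In any event this is a classical fact (see \cite{bcr,LojS}), and in the present paper it is quoted rather than reproved.
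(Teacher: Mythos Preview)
Your proof is correct and follows a standard route to the {\L}ojasiewicz inequality: reduce to a one-variable semialgebraic envelope $\theta(t)=\sup\{|g(x)|:x\in A,\ |f(x)|\le t\}$, use Tarski--Seidenberg to get semialgebraicity of $\theta$, compactness plus the zero-set inclusion to force $\theta(0^+)=0$, and then Puiseux/monotonicity in one variable to extract the power bound. The alternative curve-selection argument you sketch at the end is equally valid.

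As you yourself anticipated in your final sentence, the paper does not prove this statement at all: Corollary~\ref{loj} is stated with attribution to \cite{bcr,LojS} and used as a black box, with no argument given. So there is nothing to compare against on the paper's side; your write-up simply supplies a proof where the paper chose to cite one.
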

	\noindent
	The smallest constant $\eta$  is called the {\L}ojasiewicz exponent and $\kappa$ is the corresponding {\L}ojasiewicz
	constant.

	The Markov inequality below can be used to estimate gradients of polynomials.
	\begin{theorem} [\cite{bmp,aksr}]
		\label{Markov}
		Let $\bar{\Delta}$ be the scaled simplex as in \reff{sca:simplex} and let $p \in \mathbb{R}[x]$ be a polynomial with degree $\leq d$. Then:
		\[
		\max _{x \in \bar{\Delta}}\|\nabla p(x)\|_2 \leq \frac{2 d(2 d-1)}{\sqrt{n}+1}\max _{x \in \bar{\Delta}}|p(x)|.
		\]
	\end{theorem}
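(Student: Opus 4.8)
The statement is a Markov-type gradient bound on the scaled simplex $\bar\Delta$, and the natural route is to reduce it to the classical one-variable Markov inequality after identifying the geometric quantity that produces the denominator $\sqrt n+1$.

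The plan is to start from the geometry of $\bar\Delta$. The affine substitution $y=\frac{1}{n+\sqrt n}\,(x+\mathbf 1)$, with $\mathbf 1=(1,\dots,1)$, maps the standard simplex $\Delta_n=\{y\in\mathbb R^n\mid y\ge 0,\ y_1+\cdots+y_n\le 1\}$ bijectively onto $\bar\Delta$: the conditions $1+x_i\ge 0$ become $y_i\ge 0$, and $\sqrt n-\sum_i x_i\ge 0$ becomes $\sum_i y_i\le 1$. Under this map gradients are scaled by $\frac{1}{n+\sqrt n}$ and sup-norms on the two simplices coincide, so since $n+\sqrt n=\sqrt n(\sqrt n+1)$, any bound $\max_{\Delta_n}\|\nabla q\|_2\le c_d\sqrt n\,\max_{\Delta_n}|q|$ transfers to exactly $\max_{\bar\Delta}\|\nabla p\|_2\le \frac{c_d}{\sqrt n+1}\,\max_{\bar\Delta}|p|$. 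Equivalently, the relevant scale is the minimal width $w(\bar\Delta)=(n+\sqrt n)\,w(\Delta_n)$, and $w(\Delta_n)=1/\sqrt n$ is the smallest altitude of $\Delta_n$, realized between the vertex $0$ and the opposite facet $\{\sum_i y_i=1\}$ (the altitudes to the facets $\{y_i=0\}$ equal $1$, and for a simplex the width equals the smallest altitude). Hence $w(\bar\Delta)=\sqrt n+1$.

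It then remains to prove a Markov inequality on a convex body $K$ with constant $\tfrac{2d(2d-1)}{w(K)}$, applied to $K=\bar\Delta$. I would invoke (or reprove) the Wilhelmsen-type convex-body Markov inequality in the refined form due to Kroó and Révész \cite{aksr}: let $x^\star\in K$ attain $G:=\max_K\|\nabla p\|_2$ and put $u:=\nabla p(x^\star)/G$. One restricts $p$ to line segments of $K$ parallel to $u$; on each such segment $p$ becomes a univariate polynomial of degree $\le d$, and the classical Markov inequality implies, on an interval $I$ of length $L$, the bound $\max_{I}|q'|\le \tfrac{2d(2d-1)}{L}\max_{I}|q|$ (valid at every point of $I$, not just its endpoints, and not necessarily sharp). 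Using that $K$ has width $\ge w(K)$ in the direction $u$ produces a segment of length $\ge w(K)$, and a convexity argument transfers the resulting estimate to the value $G$ at $x^\star$. Combining this with $w(\bar\Delta)=\sqrt n+1$ yields $\max_{\bar\Delta}\|\nabla p\|_2\le \tfrac{2d(2d-1)}{\sqrt n+1}\max_{\bar\Delta}|p|$.

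The main obstacle is precisely this last transfer. A segment of a convex body through a prescribed boundary point in a prescribed direction can be arbitrarily short — for a simplex this already occurs at a point in the relative interior of an edge, taken in a direction tangent to the opposite facet — so one cannot simply slice $\bar\Delta$ through $x^\star$ in the direction $u$ and apply one-variable Markov directly. The substance of the Wilhelmsen/Kroó–Révész argument is to circumvent this using only that the width is bounded below in \emph{every} direction, and it is in this step that the constant $2d(2d-1)$ (rather than the optimal $d^2$-type constant) is incurred; everything else is the bookkeeping of the affine normalization and the width computation above.
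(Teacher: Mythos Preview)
The paper does not prove this theorem; it merely states it with citations to \cite{bmp,aksr} and uses it as a black box in the proof of Theorem~\ref{comp:matPut2}. So there is no ``paper's own proof'' to compare against.

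That said, your sketch is consistent with how the cited references obtain the bound. Your width computation is correct: the affine map $x\mapsto \frac{1}{n+\sqrt n}(x+\mathbf 1)$ sends $\bar\Delta$ to the standard corner simplex, and the minimal width of $\bar\Delta$ is indeed $\sqrt n+1$, attained in the direction $(1,\dots,1)/\sqrt n$. The factor $2d(2d-1)$ is exactly the Kro\'o--R\'ev\'esz constant for the convex-body Markov inequality (their Theorem~2 in \cite{aksr} gives, for any convex body $K\subset\mathbb R^n$ of minimal width $w(K)$, the bound $\max_K\|\nabla p\|_2\le \tfrac{2d(2d-1)}{w(K)}\max_K|p|$). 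You have correctly identified that the nontrivial content lies in their handling of the ``short segment'' obstruction, and you are right that this is where the suboptimal constant $2d(2d-1)$ (rather than $d^2$) enters. Since the paper treats the whole inequality as a quotable lemma, your outline---compute $w(\bar\Delta)=\sqrt n+1$ and invoke \cite{aksr}---is precisely the intended derivation; the only thing left implicit is the internal mechanics of the Kro\'o--R\'ev\'esz argument, which neither you nor the paper reproduce.
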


\rev{	
The following theorem is a slight variation of \cite[Theorem 3.3]{hapham}, which gives explicit {\L}ojasiewicz exponents for basic closed semialgebraic sets.
\begin{theorem}[\cite{hapham}] \label{klpham}
Suppose that $g_1, \dots, g_s, h_1,\dots, h_l\in \mR[x]_d$ and let
$$
	S:=\{x \in \mathbb{R}^n \mid h_1(x)=\cdots =h_l(x)=0, 	g_1(x) \geq 0, \ldots, g_s(x) \geq 0\} \neq \emptyset.
$$
Then for any compact set $K \subset \mathbb{R}^n$, there is a constant $c>0$ such that for all $x \in K$,	
\[
 d(x, S)^{\mathcal{R}(n+l+s-1, d+1)}  \leq  -c\cdot \min\{g_1(x),\dots,g_s(x),h_1(x),-h_1(x),\dots,h_l(x),-h_l(x),0\},
\]
%$$
%  \operatorname{dist}(x, S)^{\mathcal{R}(n+l+m-1, d+1)}  \leq  c\cdot (\sum_{i=1}^l\left|g_i(x)\right|+\sum_{j=1}^m\left[h_j(x)\right]_{+}),
%$$
where $d(x,S)$ is the Euclidean distance to the set $S$ and $ \mathcal{R}(n, d)=d\cdot(3 d-3)^{n-1} $.
%\begin{aligned}
%	&\mathscr{R}(n, d):= \begin{cases}d(3 d-3)^{n-1} & \text { if } d \geq 2 \\ 1 & \text { if } d=1\end{cases}
%\end{aligned}	
\end{theorem}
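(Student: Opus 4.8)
The plan is to derive Theorem~\ref{klpham} directly from \cite[Theorem~3.3]{hapham}, so the argument is mostly a matter of matching conventions: reconciling the $\min$-form of the constraint violation used here with the form used in \cite{hapham}, and checking that the role of the compact set is the same. First I would recall the cited statement in the form we need: on a compact set $K$ one has
\[
d(x,S)^{\mathcal{R}(n+l+s-1,\,d+1)}\ \leq\ c'\cdot\psi(x)\qquad\text{for all }x\in K,
\]
where $\mathcal{R}(n,d)=d(3d-3)^{n-1}$ and $\psi$ is a standard constraint-violation function for the system $h_1=\cdots=h_l=0,\ g_1\geq 0,\dots,g_s\geq 0$, say $\psi(x)=\sum_{j=1}^{l}|h_j(x)|+\sum_{i=1}^{s}\max\{-g_i(x),0\}$.

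Next I would rewrite the right-hand side appearing in our statement. For every $x\in\mathbb{R}^n$,
\[
-\min\{g_1(x),\dots,g_s(x),\,h_1(x),-h_1(x),\dots,h_l(x),-h_l(x),\,0\}
=\max\Big\{0,\ \max_{1\leq i\leq s}\big(-g_i(x)\big),\ \max_{1\leq j\leq l}|h_j(x)|\Big\}=:\phi(x),
\]
using $\max\{h_j(x),-h_j(x)\}=|h_j(x)|$; the function $\phi$ is continuous, semialgebraic, nonnegative, and vanishes exactly on $S$. A one-line comparison gives $\phi(x)\leq\psi(x)\leq(l+s)\,\phi(x)$ for all $x$, so $\phi$ and $\psi$ are equivalent up to a factor depending only on $l,s$. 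Feeding this into the cited inequality yields $d(x,S)^{\mathcal{R}(n+l+s-1,d+1)}\leq c'(l+s)\,\phi(x)$ on $K$, i.e. the claim with $c:=c'(l+s)$. (If \cite[Theorem~3.3]{hapham} is already stated in the $\min$-form, the comparison is vacuous, which is exactly why the present statement is only a ``slight variation''.)

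The single substantive ingredient, which I would flag as inherited from \cite{hapham} rather than re-prove, is the precise value of the exponent, and in particular that each equality $h_j=0$ contributes only $1$ (not $2$) to the first argument of $\mathcal{R}$. Splitting $h_j=0$ into $h_j\geq 0$ and $-h_j\geq 0$ and invoking an inequalities-only version would give the larger exponent $\mathcal{R}(n+2l+s-1,d+1)$, so the sharp bound genuinely needs the equality-aware argument of \cite{hapham}. I expect the hardest point of a self-contained proof to be exactly there: one performs a curve-selection argument near a point of $\partial S\cap K$, keeping the equalities as single equations so that the resulting auxiliary algebraic set lives in roughly $n+l+s$ coordinates with defining polynomials of degree $\leq d+1$; its one-dimensional components then have degree at most $\mathcal{R}(n+l+s-1,d+1)$ by a B\'ezout/Milnor--Thom-type bound, and that degree controls the local {\L}ojasiewicz exponent along the curve. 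The passage from these local, along-a-curve estimates to a uniform estimate on the compact set $K$ is then routine, via compactness and the abstract {\L}ojasiewicz inequality of Corollary~\ref{loj}.
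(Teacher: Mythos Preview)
The paper does not prove this theorem at all: it is stated as a citation from \cite{hapham}, introduced as ``a slight variation of \cite[Theorem~3.3]{hapham}'', and used as a black box. Your proposal---reducing to \cite[Theorem~3.3]{hapham} by comparing the $\min$-form constraint violation $\phi$ with the sum-form $\psi$ via $\phi\leq\psi\leq(l+s)\phi$---is exactly the kind of routine translation the phrase ``slight variation'' is meant to cover, and is correct.
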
 
}

	\section{Degree bounds for  scalar polynomial inequalities}
	\label{Matrix:poly}
\rev{	In this section, we consider the case where  $ G(x)$ is  a diagonal matrix with diagonal entries  $g_1,\dots,g_m \in \mR[x]$. Then, the set $K$ as in \reff{def:K} can be equivalently described as
	\be \label{scal:K}
	K=\{x\in\mR^n \mid g_1(x)\geq 0,\dots,\dots,g_m(x)\geq 0\}.
	\ee
	If the symmetric    polynomial matrix $F(x)\succ 0$ on $K$, we prove a  polynomial bound on the degrees of terms appearing in  matrix  Putinar's Positivstellens{\"a}tz. This  improves the  exponential bound  shown by Helton and Nie \cite{helnie}. The proof essentially generalizes  the idea and strategies used in \cite{lbbm,bmp} to the matrix case. We refer {\it to Step 1 in the section `Outlines of proofs' for an overview of the proof.} }
	
	Let $\Delta^n$ be the $n$-dimensional standard simplex, i.e.,
	\[
	\Delta^n=\{x\in\mR^n\mid x_1+\dots+x_n=1,x_1\geq 0,\dots,x_n\geq 0\}.
	\] 
	Denote the scaled simplex by
	\[
	\bar{\Delta}=\left\{x \in \mathbb{R}^n \mid \sqrt{n}-x_1-\cdots-x_n \geq 0,1+x_1 \geq 0, \ldots, 1+x_n \geq 0\right\}.
	\]
	For a degree $t$,   let  $\{B_{ \alpha}^{(t)}(x):\alpha \in \mathbb{N}^n_t \}$ be the Bernstein basis of degree $t$ on $\bar{\Delta}$, i.e.,
	{\small
		\be \label{bbasis}
		B_{ \alpha}^{(t)}(x)=\frac{t!}{\alpha_1!\cdots\alpha_n!}(n+\sqrt{n})^{-t}\left(\sqrt{n}-x_1-\cdots-x_n\right)^{t-|\alpha|}\left(1+x_1\right)^{\alpha_1} \cdots\left(1+x_n\right)^{\alpha_n},
		\ee
	}
	For a degree $t\geq d$, the $\ell\times \ell$ symmetric polynomial matrix $F$ of degree $d$ can be expressed in the  Bernstein basis $
	\{B_{ \alpha}^{(t)}(x):\alpha \in \mathbb{N}^n_t \}$   as 
	\[
	F=\sum_{\alpha \in \mathbb{N}_t^n} F_{ \alpha}^{(t)} B_{ \alpha}^{(t)}(x),
	\]
	where $F_{ \alpha}^{(t)}$ are $\ell$-by-$\ell$ symmetric matrices.
	The Bernstein norm  of $ F$ with respect to $
	\{B_{ \alpha}^{(t)}(x):\alpha \in \mathbb{N}^n_t \}$  is defined as 
	\[
	\|F\|_{B,t}=\max\{\|F_{\alpha}^{(t)}\|_2:~\alpha \in \mathbb{N}^n_{t}\}.
	\]
	For convenience, we  denote $\|F\|_{B,d}$ simply as $\|F\|_{B}$. Similarly, 
	$F$ can also be expressed in the monomial basis $\{x^{\alpha}:\alpha \in \mathbb{N}^n_d\}$ as 
	\[
	F=\sum_{\alpha \in \mathbb{N}_d^n}F_{\alpha}\frac{d!}{\alpha_1!\cdots\alpha_n!}x^{\alpha}.
	\] 
	The spectral norm of $F$ is defined as 
	$
	\|F\|_2=\max\{\|F_{\alpha}\|_2:~\alpha \in \mathbb{N}^n_{d}\}.
	$
	\rev{
	For $f(x)\in \mR[x]_{d_1}$, $g(x) \in \mR[x]_{d_2}$ and $\ell_1\geq \ell_2\geq d_1$, the following is a basic property of the Bernstein norm  (see \cite{gefa}, \cite[Lemma 1.5]{bmp}):
	\be \label{proB}
	\max\limits_{s \in \bar{\Delta}}|f(s)|\leq \|f\|_{B,\ell_1}\leq \|f\|_{B,\ell_2}, ~~ 
 \|fg\|_{B,d_1+d_2}\leq \|f\|_{B,d_1}\|g\|_{B,d_2}. 
	\ee
	
	}

	Suppose that $p(x)$ is a homogeneous polynomial   with $\deg(p)=d$.  Polya's Positivstellens{\"a}tz \cite{polya,pvrb} states that if $p(x)\geq p_{\min}>0$ on the standard simplex $\Delta^n$, then the polynomial $(x_1+\cdots+x_n)^kp(x)$  has only positive coefficients if
	$
	k> d(d-1)\frac{\|p\|}{2p_{\min}}-d.
	$ 
	This result has been generalized to the matrix case by Scherer and Hol \cite{schhol}.  It was shown in \cite[Theorem 3]{schhol} that 	if $F(x)$ is an $\ell$-by-$\ell$   homogeneous  symmetric polynomial matrix with $\deg(F)=d$ and $F(x)\succeq F_{\min} \cdot  \rev{I_{\ell}}\succ 0$ on  $\triangle^n$, then for
	$
	k> d(d-1)\frac{\|F\|_2}{2F_{\min}}-d,
	$
	all coefficient matrices  of $(x_1+\cdots+x_n)^k F(x)$ in the monomial  basis are positive definite.

	In the following, we present a dehomogenized version of matrix  Polya's Positivstellens{\"a}tz in terms of the Bernstein basis.
	\begin{lemma}
		\label{mat:polya:sca}
		Suppose  $F(x)$ is an $\ell$-by-$\ell$  symmetric    polynomial matrix with $\deg(F)=d$.
		If  $F(x)\succeq F_{\min} \cdot  \rev{I_{\ell}}\succ 0$ on $\bar{\Delta}$, then for 
		\[
		k> d(d-1)\frac{\|F\|_B}{2F_{\min}}-d,
		\]
		all coefficient matrices  of $ F(x)$ in the Bernstein  basis $\{ B_{ \alpha}^{(k+d)}(x):\alpha \in \mathbb{N}^n_{k+d}\}$ are positive definite.
	\end{lemma}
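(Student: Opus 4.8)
\emph{Proof sketch.}\ The plan is to deduce this dehomogenized Bernstein statement from the homogeneous matrix Polya Positivstellens\"atz of Scherer and Hol \cite[Theorem 3]{schhol}, by realizing the scaled simplex $\bar{\Delta}$ as the image of a genuine standard simplex under an affine change of coordinates that introduces one extra variable. First I would introduce the affine map $\phi=(\phi_0,\phi_1,\dots,\phi_n)\colon\mR^n\to\{y\in\mR^{n+1}\mid y_0+y_1+\cdots+y_n=1\}$ given by
\[
\phi_0(x)=\frac{\sqrt{n}-x_1-\cdots-x_n}{n+\sqrt{n}},\qquad
\phi_i(x)=\frac{1+x_i}{n+\sqrt{n}}\quad(i=1,\dots,n).
\]
A direct check shows that $\phi$ maps $\bar{\Delta}$ bijectively onto the standard simplex $\Delta^{n+1}=\{y\mid y_0+\cdots+y_n=1,\ y_0\ge0,\dots,y_n\ge0\}$, and that, writing $\alpha_0:=t-|\alpha|$ for $\alpha\in\mathbb{N}^n_t$, one has the polynomial identity
\[
B^{(t)}_{\alpha}(x)=\frac{t!}{\alpha_0!\,\alpha_1!\cdots\alpha_n!}\,\phi_0(x)^{\alpha_0}\phi_1(x)^{\alpha_1}\cdots\phi_n(x)^{\alpha_n}.
\]
Thus the degree-$t$ Bernstein basis on $\bar{\Delta}$ is exactly the pullback along $\phi$ of the multinomially scaled degree-$t$ monomials in $y$.

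Using this, I would homogenize $F$: writing $F=\sum_{\alpha\in\mathbb{N}^n_d}F^{(d)}_{\alpha}B^{(d)}_{\alpha}(x)$ for its degree-$d$ Bernstein expansion, set
\[
\widehat{F}(y):=\sum_{\alpha\in\mathbb{N}^n_d}F^{(d)}_{\alpha}\,\frac{d!}{\alpha_0!\,\alpha_1!\cdots\alpha_n!}\,y_0^{\alpha_0}y_1^{\alpha_1}\cdots y_n^{\alpha_n},
\]
a homogeneous symmetric polynomial matrix of degree $d$ in the $n+1$ variables $y=(y_0,\dots,y_n)$. The transformation identity gives $\widehat{F}(\phi(x))=F(x)$ identically, and since $\phi(\bar{\Delta})=\Delta^{n+1}$ this yields $\widehat{F}\succeq F_{\min}\cdot I_{\ell}\succ0$ on $\Delta^{n+1}$. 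The step requiring genuine care — essentially the only nontrivial point, since the quantitative content is already contained in \cite[Theorem 3]{schhol} — is to match the norms: the map $\alpha\mapsto(\alpha_0,\alpha_1,\dots,\alpha_n)$ is a bijection from $\mathbb{N}^n_d$ onto the set of degree-$d$ exponents in $n+1$ variables and carries $F^{(d)}_{\alpha}$ to the corresponding monomial coefficient matrix of $\widehat{F}$, so $\|\widehat{F}\|_2=\max_{\alpha}\|F^{(d)}_{\alpha}\|_2=\|F\|_{B,d}=\|F\|_B$.

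Next I would apply the matrix Polya Positivstellens\"atz \cite[Theorem 3]{schhol} to $\widehat{F}$ on the standard simplex in $n+1$ variables; the degree threshold there does not depend on the number of variables, so for every $k>d(d-1)\frac{\|\widehat{F}\|_2}{2F_{\min}}-d=d(d-1)\frac{\|F\|_B}{2F_{\min}}-d$, all coefficient matrices $C_{\gamma}$ (indexed by exponents $\gamma\in\mathbb{N}^{n+1}$ with $|\gamma|=k+d$) of the homogeneous degree-$(k+d)$ polynomial matrix $(y_0+y_1+\cdots+y_n)^k\,\widehat{F}(y)$ in the monomial basis are positive definite.

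Finally I would transfer this conclusion back to $x$. On $\Delta^{n+1}$ we have $y_0+\cdots+y_n=1$, hence $(y_0+\cdots+y_n)^k\widehat{F}(y)=\widehat{F}(y)$; and, by the transformation identity in degree $k+d$, substituting $y=\phi(x)$ turns the scaled monomial $\frac{(k+d)!}{\gamma_0!\cdots\gamma_n!}y_0^{\gamma_0}\cdots y_n^{\gamma_n}$ into $B^{(k+d)}_{(\gamma_1,\dots,\gamma_n)}(x)$. Therefore
\[
F(x)=\widehat{F}(\phi(x))=\sum_{\alpha\in\mathbb{N}^n_{k+d}}C_{(k+d-|\alpha|,\,\alpha)}\,B^{(k+d)}_{\alpha}(x),
\]
where $(k+d-|\alpha|,\alpha)$ denotes the tuple obtained by prepending $k+d-|\alpha|$ to $\alpha$. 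Since $\{B^{(k+d)}_{\alpha}\mid\alpha\in\mathbb{N}^n_{k+d}\}$ is a basis of $\mR[x]_{k+d}$, the degree-$(k+d)$ Bernstein coefficient matrices of $F$ are uniquely determined and hence equal $C_{(k+d-|\alpha|,\,\alpha)}\succ0$, which is exactly the assertion of the lemma.
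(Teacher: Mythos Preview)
Your proof is correct and follows essentially the same route as the paper: both homogenize $F$ via the affine identification of $\bar{\Delta}$ with the standard simplex in $n+1$ variables, apply the matrix Polya theorem of Scherer--Hol \cite[Theorem~3]{schhol}, note that $\|\widehat{F}\|_2=\|F\|_B$, and then dehomogenize by substituting back to recover the degree-$(k+d)$ Bernstein expansion of $F$. The only cosmetic differences are that you name the map $\phi$ explicitly and index the extra variable as $y_0$ rather than $y_{n+1}$.
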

	
	\begin{proof}
		Suppose that	  $F$  can be expressed 
		in the  Bernstein basis   $
		\{ B_{ \alpha}^{(d)}(x):\alpha \in \mathbb{N}^n_d\} $ as 
		\[
		F=\sum_{\alpha \in \mathbb{N}^n_d}  F_{ \alpha}^{(d)}  B_{ \alpha}^{(d)}(x),
		\]
		where $F_{ \alpha}^{(d)}$ are $\ell$-by-$\ell$ symmetric matrices.
		% Denote $n$-dimensional simplex
		%\[
		%\Delta^{n+1}=\{(x_0,x)\in \mR^{n+1}\mid (x_0,x)\geq 0,\sum\limits_{j=0}^nx_i=1\}.
		%\]
		Denote
		\[
		\widehat{F}(y_1,\dots,y_{n+1})=\sum_{\alpha \in \mathbb{N}^n_d} F_{ \alpha}^{(d)} \frac{d!}{\alpha_1!\cdots\alpha_n!} y^{\alpha}y_{n+1}^{d-|\alpha|}.
		\] 
		For each $(y_1,\dots,y_{n+1}) \in \Delta^{n+1}$, let
		\[
		x_i=(n+\sqrt{n})y_i-1~(i=1,\dots,n).
		\]
		Then we know that 
		\[
		\baray{ll}
		\sqrt{n}-\sum\limits_{i=1}^n x_i=\sqrt{n}-\sum\limits_{i=1}^n ((n+\sqrt{n})y_i-1)
		&=\sqrt{n}+n-(n+\sqrt{n})\sum\limits_{i=1}^n y_i\\
		&=(\sqrt{n}+n)(1-\sum\limits_{i=1}^n y_i)\\
		&\geq 0,\\
		\earay
		\]
		which implies that $x=(x_1,\dots,x_n)\in \bar{\Delta}$.
		By computations, we have that for $(y_1,\dots,y_{n+1}) \in \Delta^{n+1}$, the following holds
		\[
		\widehat{F}(y_1,\dots,y_{n+1})=\sum_{\alpha \in \mathbb{N}^n_d} F_{ \alpha}^{(d)} B_{ \alpha}^{(d)}(x)=F(x)\succeq F_{\min}\cdot I_{\ell}.
		\]
		Hence, $\widehat{F} \succeq F_{\min}\cdot I_{\ell}$ on 
		$\Delta^{n+1}$.
		It follows from  \cite[Theorem 3]{schhol}  that for $k> d(d-1)\frac{\|\widehat{F}\|_2}{2F_{\min}}-d$, all coefficient matrices  of $(y_1+\cdots+y_{n+1})^k\widehat{F}(y_1,\dots,y_{n+1})$ in the monomial  basis are positive definite, i.e., there exist $P_\alpha \succ 0$ such that 
		{\small
			\[
			(y_1+\cdots+y_{n+1})^k\widehat{F}(y_1,\dots,y_{n+1})=\sum_{\alpha \in \mathbb{N}_{k+d}^{n}} P_\alpha \frac{(k+d)!}{\alpha_1!\cdots\alpha_n!} y_1^{\alpha_1} \cdots y_n^{\alpha_n} (y_{n+1})^{k+d- |\alpha |}.
			\]
		}
		By substituting 
		\[
		y_i=\frac{x_i+1}{n+\sqrt{n}}~(i=1,\dots,n),\,y_{n+1}=1- \frac{1}{n+\sqrt{n}}\left(\sum\limits_{i=1}^nx_i+n\right)
		\]
		into the above identity, we obtain 
		\[
		F(x_1,\dots,x_{n})=\sum_{\alpha \in \mathbb{N}_{k+d}^{n}} P_\alpha  B_{ \alpha}^{(k+d)}(x).
		\]
		Since $\|\widehat{F}\|_2= \|F\|_{B}$, the conclusion follows.
		
		$\qed$
	\end{proof}

	\begin{lemma}
		\label{lemma3.3}
		Suppose  $P(x)$ is an $\ell$-by-$\ell$ symmetric polynomial matrix  with $\deg(P)=d_P$. Let 	$\xi\in \mR^{\ell}$ with $\|\xi\|_2=1$, and denote 
		\[
		P^{(\xi)}(x):=\xi^T P(x) \xi.
		\]
		Then, we have that $\| P^{(\xi)}\|_{B,d_P}\leq \|P\|_B$.
	%	\be \label{lem3.3}
	%	\| P^{(\xi)}\|_{B,d_P}\leq \|P\|_B. 
	%	\ee
		Furthermore, there exists $\xi_0 \in \mR^{\ell}$ with $\|\xi_0\|_2=1$ such that $\| P^{(\xi_0)}\|_{B,d_P}=\|P\|_B$.
		%\be \label{lem3.3:1}
		%\| P^{(\xi_0)}\|_{B,d_P}=\|P\|_B.
		%\ee
	\end{lemma}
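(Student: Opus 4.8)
The plan is to reduce everything to the (unique) Bernstein expansion of $P$ in degree $d_P$ together with the elementary fact that $|\xi^T A \xi| \le \|A\|_2$ for a symmetric matrix $A$ and a unit vector $\xi$. Concretely, I would first write
\[
P(x) = \sum_{\alpha \in \mathbb{N}^n_{d_P}} P_\alpha^{(d_P)} B_\alpha^{(d_P)}(x),
\]
where the $P_\alpha^{(d_P)}$ are $\ell$-by-$\ell$ matrices; since $P$ is symmetric and the $B_\alpha^{(d_P)}$ are scalar, uniqueness of the expansion forces each $P_\alpha^{(d_P)}$ to be symmetric, and by definition $\|P\|_B = \|P\|_{B,d_P} = \max_{\alpha}\|P_\alpha^{(d_P)}\|_2$. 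Applying $\xi^T(\cdot)\xi$ term by term gives
\[
P^{(\xi)}(x) = \sum_{\alpha \in \mathbb{N}^n_{d_P}} \big(\xi^T P_\alpha^{(d_P)} \xi\big)\, B_\alpha^{(d_P)}(x),
\]
and since $P^{(\xi)}$ has degree at most $d_P$ and $\{B_\alpha^{(d_P)} : \alpha\in\mathbb{N}^n_{d_P}\}$ is a basis of $\mathbb{R}[x]_{d_P}$, the scalars $\xi^T P_\alpha^{(d_P)} \xi$ are precisely the degree-$d_P$ Bernstein coefficients of $P^{(\xi)}$ used to define $\|P^{(\xi)}\|_{B,d_P}$.

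For the first claim I would then invoke, for each $\alpha$, the bound $|\xi^T P_\alpha^{(d_P)} \xi| \le \|P_\alpha^{(d_P)}\|_2 \,\|\xi\|_2^2 = \|P_\alpha^{(d_P)}\|_2$ (using $\|\xi\|_2 = 1$), and take the maximum over $\alpha$ to obtain $\|P^{(\xi)}\|_{B,d_P} = \max_\alpha |\xi^T P_\alpha^{(d_P)} \xi| \le \max_\alpha \|P_\alpha^{(d_P)}\|_2 = \|P\|_B$. For the ``furthermore'' part, I would choose an index $\beta\in\mathbb{N}^n_{d_P}$ with $\|P_\beta^{(d_P)}\|_2 = \|P\|_B$; since $P_\beta^{(d_P)}$ is symmetric, its spectral norm equals $\max\{\lambda_{\max}(P_\beta^{(d_P)}), -\lambda_{\min}(P_\beta^{(d_P)})\}$ and is attained as $|\xi_0^T P_\beta^{(d_P)} \xi_0|$ for a suitable unit eigenvector $\xi_0$. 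Then $\|P^{(\xi_0)}\|_{B,d_P} = \max_\alpha |\xi_0^T P_\alpha^{(d_P)}\xi_0| \ge |\xi_0^T P_\beta^{(d_P)}\xi_0| = \|P\|_B$, which together with the inequality already proved yields $\|P^{(\xi_0)}\|_{B,d_P} = \|P\|_B$.

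There is no genuine obstacle here: the lemma is essentially bookkeeping about how the matrix Bernstein norm interacts with the scalarization $\xi^T(\cdot)\xi$. The only point requiring a word of care is matching indices correctly --- namely verifying that the degree-$d_P$ expansion of $P^{(\xi)}$ inherited from that of $P$ is indeed \emph{the} expansion used in the definition of $\|P^{(\xi)}\|_{B,d_P}$ (which follows from $\deg P^{(\xi)} \le d_P$ and uniqueness of the Bernstein coefficients of a polynomial of degree $\le d_P$), and recalling that $\|P\|_B$ abbreviates $\|P\|_{B,d_P}$ so that no rescaling of the Bernstein degree occurs.
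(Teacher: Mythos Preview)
Your proof is correct and follows essentially the same approach as the paper: expand $P$ in the degree-$d_P$ Bernstein basis, read off the scalar Bernstein coefficients of $P^{(\xi)}$ as $\xi^T P_\alpha^{(d_P)}\xi$, and use the inequality $|\xi^T A\xi|\le \|A\|_2$ for unit $\xi$ and symmetric $A$. Your treatment of the ``furthermore'' part (selecting a maximizing index $\beta$ and a corresponding eigenvector $\xi_0$) is slightly more explicit than the paper's, but the argument is the same.
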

	
	\begin{proof}
		Suppose	that $P$  can be expressed  in the  Bernstein basis   $
		\{ B_{ \alpha}^{(d_P)}(x):\alpha \in \mathbb{N}^n_{d_P}\} $ as 
		\[
		P=\sum_{ \alpha \in \mathbb{N}^n_{d_P}}P_{ \alpha}^{(d_P)} B_{ \alpha}^{(d_P)}(x),
		\]
		where $P_{ \alpha}^{(d_P)}$ are $\ell$-by-$\ell$ symmetric matrices.
		Note that
		\be \nonumber
		\baray{ll}
		\| P^{(\xi)}\|_{B,d_P}=\| \xi^T P(x) \xi\|_{B,d_P}&= \|\sum\limits_{\alpha \in \mathbb{N}^n_{d_P}} \xi^TP_{ \alpha}^{(d_P)}\xi B_{ \alpha}^{(d_P)}(x)\|_{B,d_P}\\
		&=\max\{|\xi^TP_{ \alpha}^{(d_P)}\xi|:~\alpha \in \mathbb{N}^n_{d_P}\}\\
		&\leq \max  \{\|P_{ \alpha}^{(d_P)}\|_2:~\alpha \in \mathbb{N}^n_{d_P}\} = \|P\|_B.
		\earay
		\ee
		Thus, we have 
		$
		\| P^{(\xi)}\|_{B,d_P}\leq  \|P\|_B.
		$  
		Since there exists $\xi_0 \in \mR^{\ell}$ with $\|\xi_0\|_2=1$ such that  	
		\[
		\|P\|_B=\max\{|\xi_0^TP_{\alpha}^{(d_P)}\xi_0 |:~\alpha \in \mathbb{N}^n_{d_P}\},
		\]
		then we have that
		\[
		\| P^{(\xi_0)}\|_{B,d_P}=\max\{|\xi_0^TP_{ \alpha}^{(d_P)}\xi_0|:~\alpha \in \mathbb{N}^n_{d_P}\}=\|P\|_B.
		\]
		
		$\qed$
	\end{proof}

	Denote
	\be \label{def:Loj:G3}
	\mathcal{G}(x)=-\min \left(\rev{\frac{g_1(x)}{\left\|g_1\right\|_B}}, \ldots, \rev{\frac{g_{m}(x)}{\left\|g_{m}\right\|_B}}, 0\right). 
	\ee
	For $x\in \mR^n$,  let $d(x,K)$ denote the Euclidean distance to the set $K$, i.e.,
	\[
	d(x,K)=\min\{\|x-y\|_2,\,\, y\in K\}.
	\]
	If $\mathcal{G}(x)=0$, then we have $x\in K$ and $ d(x,K)=0$. \rev{The distance function $d(x,K)$ is a continuous semialgebraic function (see \cite[Proposition 2.2.8]{bcr}, \cite[Proposition 1.8]{hapham}).}
	By Corollary \ref{loj}, there exist positive constants $\eta$, $\kappa$ such that the following {\L}ojasiewicz inequality holds:
	\be \label{Lojexp2}
	d(x,K)^{\eta} \leq \kappa \mathcal{G}(x), \quad \forall x \in \bar{\Delta}.
	\ee
	Note that
	\[
	d_G=\max\{\deg(g_1),\dots,\deg(g_m)\}.
	\]
	
	In the following, we prove a polynomial bound on the degrees of terms appearing in matrix   Putinar's Positivstellens{\"a}tz  when $K$ is given by finitely many scalar polynomial inequalities, which  improves the  exponential bound established in \cite{helnie}. 
	\rev{Recall that the quadratic module of scalar  polynomials generated 
		by  $G(x)$ in $\mR[x]$ is denoted by $\qmod{G}$ (see Section \ref{sec:qm}).
	%	\be \nonumber 
	%	\qmod{G}=\left\{
	%	S+\sum_{i=1}^t P_i^TGP_i \mid t\in \mathbb{N},~ S \in \Sigma[x]^{1},~P_i \in \mathbb{R}[x]^{m\times 1}
	%	\right\}.
	%	\ee
	}
	
	\begin{theorem}
		\label{comp:matPut2}
		Let $ G(x)$ be  a diagonal matrix with diagonal entries  $g_1,\dots,g_m \in \mR[x]$ and let $K$ be as in \reff{scal:K}.	Suppose that $F(x)$ is an $\ell$-by-$\ell$ symmetric polynomial matrix with $\deg(F)=d$, and \rev{$ (1-\|x\|_2^2)\cdot I_{\ell} \in\qmod{G}^{\ell}$.}
		If $F(x)\succeq F_{\min} \cdot  I_{\ell} \succ 0$ on $K$,  then $F \in \qmod{G}_k^{\ell}$ for
		\be \label{bound1}
		k\geq C\cdot {8}^{7\eta} m^3n^2d_{G}^6\kappa^{7}d^{14\eta}\left(\frac{\|F\|_B}{F_{\min}}\right)^{7\eta+3}.
		\ee
		In the above,  $\kappa$, $\eta$ are respectively the {\L}ojasiewicz constant and exponent  in \reff{Lojexp2} and $C>0$ is a constant independent of $F$, $g_1,\dots,g_m$. \rev{Furthermore, the {\L}ojasiewicz exponent can be estimated as 
		$	\eta\leq (d_G+1)(3d_G)^{n+m-2}$.
		}
	\end{theorem}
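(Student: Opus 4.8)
The plan is to follow the three-step structure outlined as \emph{Step 1a--1c} in the introduction, adapting the scalar arguments of \cite{lbbm,bmp} to the matrix setting. Throughout write $d_{\bar\Delta}$ for degrees measured against $\bar\Delta$ and keep careful track of every degree and norm estimate, since the final bound \reff{bound1} must expose all constants depending on $F$ and $g_1,\dots,g_m$.

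\emph{Step 1a (perturbation to a positive-definite matrix on $\bar\Delta$).} First I would produce $\lambda>0$ and SOS scalars $h_1,\dots,h_m\in\Sigma[x]$ of controlled degree so that
\[
P(x):=F(x)-\lambda\sum_{i=1}^m h_i(x)g_i(x)\,I_\ell\ \succeq\ \tfrac14 F_{\min}\cdot I_\ell,\qquad \forall\,x\in\bar\Delta.
\]
The point is that on $K$ we already have $F\succeq F_{\min}I_\ell$, while off $K$ at least one $g_i$ is negative, so subtracting a suitable nonnegative combination of the $g_i$ pushes $F$ up. To get quantitative control I would invoke the plateau-approximation result \cite[Proposition 3.2]{bmp}: it provides SOS polynomials that are $\approx 1$ where some $g_i\ge 0$-ish and $\approx 0$ deep inside $K$, with degrees polynomial in $1/\veps$, $d_G$, and governed by the {\L}ojasiewicz data $(\kappa,\eta)$ of \reff{Lojexp2}. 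The only genuinely new ingredient over the scalar case is bounding $\|P\|_B$ in terms of $\|F\|_B$, $\lambda$, $\|h_i\|_B$, $\|g_i\|_B$; this is handled by Lemma \ref{lemma3.3} (reduce the matrix Bernstein norm to a scalar one via a unit vector $\xi_0$) together with the submultiplicativity in \reff{proB}. I would track the resulting degree $d_P$ and ratio $\|P\|_B/F_{\min}$ explicitly.

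\emph{Steps 1b--1c (Polya and then transfer through $\qmod{G}$).} Since $P\succeq\tfrac14F_{\min}I_\ell$ on $\bar\Delta$, Lemma \ref{mat:polya:sca} gives positive-definite coefficient matrices $P_\alpha^{(k')}\succ0$ with
\[
P=\sum_{\alpha\in\mathbb N^n_{k'}}P_\alpha^{(k')}B_\alpha^{(k')}(x),\qquad
k'\gtrsim d_P(d_P-1)\frac{\|P\|_B}{\tfrac12 F_{\min}}-d_P .
\]
Each Bernstein basis element $B_\alpha^{(k')}$ is a product of the linear forms $1+x_i$ and $\sqrt n-\sum x_i$, all of which lie in $\qmod{1-\|x\|_2^2}_2$ (completing the square). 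Hence each summand $P_\alpha^{(k')}B_\alpha^{(k')}I_\ell$, and therefore $P$, lies in $\qmod{1-\|x\|_2^2}^\ell$ with degree $O(k')$. Now reinsert $P=F-\lambda\sum h_i g_i I_\ell$: the terms $\lambda h_i g_i I_\ell$ belong to $\qmod{G}^\ell$ by construction (with $G$ diagonal, $g_iI_\ell=e_i^TGe_i$ pattern), so $F\in\qmod{1-\|x\|_2^2}^\ell+\qmod{G}^\ell$. Finally, the hypothesis $(1-\|x\|_2^2)\cdot I_\ell\in\qmod{G}^\ell$ lets me absorb the $\qmod{1-\|x\|_2^2}^\ell$ part into $\qmod{G}^\ell$, at the cost of multiplying the relevant degrees by the (fixed, $F$- and $G$-independent as a \emph{structure}, but $g_i$-dependent) degree of the representation of $1-\|x\|_2^2$; this contributes to the constant $C$. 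Collecting all degree estimates through Steps 1a--1c and simplifying yields \reff{bound1}. The {\L}ojasiewicz exponent bound $\eta\le(d_G+1)(3d_G)^{n+m-2}$ follows by applying Theorem \ref{klpham} to $S=K$ described by $g_1\ge0,\dots,g_m\ge0$ (so $l=0$, $s=m$), giving exponent $\mathcal R(n+m-1,d_G+1)=(d_G+1)(3d_G)^{n+m-2}$, and noting $\mathcal G(x)$ in \reff{def:Loj:G3} is comparable to $\min_i g_i(x)/\|g_i\|_B$ so the same exponent governs \reff{Lojexp2}.

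\emph{Main obstacle.} The hard part is Step 1a: making the perturbation $\lambda\sum h_i g_i$ work simultaneously as a genuine matrix inequality on all of $\bar\Delta$ while keeping $\deg(h_i g_i)$ and $\|P\|_B$ polynomially controlled. In the scalar case \cite{bmp} one perturbs a scalar $f$; here $F$ is matrix-valued, so one must argue the plateau construction uniformly in the direction $\xi$, or equivalently apply the scalar construction to $\xi_0^TF\xi_0$ for the worst-case $\xi_0$ from Lemma \ref{lemma3.3} and then check the bound is direction-independent. Getting the dependence on $m$ (the number of constraints) to come out as $m^3$ rather than exponential, and threading the {\L}ojasiewicz exponent $\eta$ through as the controlling power $7\eta+3$, is where all the care goes; the rest is bookkeeping with \reff{proB} and Lemma \ref{mat:polya:sca}.
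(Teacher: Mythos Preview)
Your proposal is correct and follows essentially the same approach as the paper: construct the scalar SOS plateau polynomials $h_i$ from \cite[Proposition~3.2]{bmp}, verify $P=F-\lambda\sum_i h_ig_iI_\ell\succeq\tfrac14F_{\min}I_\ell$ on $\bar\Delta$ by testing against every unit vector $\xi$, bound $\|P\|_B$ via Lemma~\ref{lemma3.3}, apply the matrix Polya Lemma~\ref{mat:polya:sca}, and pass through $\qmod{1-\|x\|_2^2}$ into $\qmod{G}^\ell$. One small clarification: the $h_i$ are built once from the $g_i$ alone (independent of $F$ and $\xi$), and the positivity of $P$ is then checked for \emph{every} $\xi$ via a two-case argument (using Markov's inequality and the {\L}ojasiewicz bound when $\xi^TF(x)\xi$ is small); the worst-case $\xi_0$ from Lemma~\ref{lemma3.3} enters only afterwards, to control $\|P\|_B$.
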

	
		~\\
	\noindent
	{\bf Remark.} 
	\rev{At a feasible point $x\in K$, the Mangasarian--Fromovitz constraint
		qualification (MFCQ) holds if there exists a direction $v\in \mR^n$ such that $\nabla g_j(x)^Tv>0$	 for all active inequality constraints $g_j(x)=0$. The  linear independence constraint qualification (LICQ)  holds at $x$ if the set of gradient vectors $\nabla g_j(x)$ of all active constraints  is	linearly independent.
		It was shown in \cite{Robinson} that  if the MFCQ holds at every $x\in K$, then the {\L}ojasiewicz exponent, as in \reff{Lojexp2}, is equal to 1. Since  the MFCQ is weaker than the LICQ,
		we know that if the LICQ holds at every $x\in K$,  the {\L}ojasiewicz exponent $\eta=1$ (see also \cite{lbbm,bmp})\footnote{The author would like to thank the referee for pointing out this fact.}.
		Then, under the assumptions of Theorem   \ref{comp:matPut2}, we have that   $\rev{F} \in \qmod{G}_k^{\ell}$ if
		\be \label{rate10}
		k\geq C\cdot  m^3n^2d_{G}^6\kappa^{7}d^{14}\left(\frac{\|F\|_B}{F_{\min}}\right)^{10}.
		\ee
	}

	\begin{proof}
		Without loss of generality,   we assume that $\left\|g_i\right\|_B=1$ for $i=1,\dots,m$. Otherwise, we can 
		scale $g_i$ by $1/\|g_i\|_B$. \rev{We remark that scaling $g_i$ does not change $\kappa$, since we normalize $g_i$ in $\mathcal{G}$ and $(g_i(x)/\|g_i\|_B)/(\|g_i(x)/\|g_i\|_B\|_B)=g_i(x)/\|g_i\|_B$. Thus,   the bound remains unchanged.} Let 
		\[
		\delta=\frac{1}{{8}^{\eta}\kappa d^{2\eta}}\left(\frac{F_{\min}}{\|F\|_B}\right)^{\eta},~
		v =    \frac{\delta F_{\min}}{20m\|F\|_B},~ \lambda=5 \|F\|_B/\delta.
		\]
		For the above $\delta$, $v$, it follows from  \cite[Proposition 3.2]{bmp}  that there exists $h_i\in \mR[x]$ such that for $x\in \bar{\Delta}$, we have that
		\bit 
		\item[(i)] if $g_i(x) \geq 0,\left|h_{i}(x)\right| \leq 2 v$.
		\item[(ii)]  if $g_i(x) \leq-\delta,\left|h_{i}(x)\right| \geq \frac{1}{2}$.
		\item[(iii)]  $\left\|h_{i}\right\|_B \leq 1$.
		\item[(iv)] $h_{i} \in \Sigma[x]_{k}$ with $k\leq  10^5n d_{G}^2 \delta^{-2} v^{-1}$.
		%\item[(iv)] $h_{i, \delta, v} \in \Sigma_{k}$ with $k=O\left(n d_{G}^2 \delta^{-2} v^{-1}\right)$.
		\eit 
		%	Denote
		%	$$
		%	P=F-\lambda \sum_{i=1}^m h_i g_i I_{\ell}.
		%	$$
		Let
		\[
		P= F-\lambda \sum_{i=1}^m h_i g_i I_{\ell}.
		\]
		In the following, we show that
		$P\succeq \frac{1}{4} F_{\min} \cdot I_{\ell}
		$ on $\bar{\Delta}$. 
		\rev{Equivalently, for a fixed $x\in \bar{\Delta}$, we prove that  for each  $\xi\in \mR^{\ell}$ with $\|\xi\|_2=1$, the following holds:}
		\be \label{pos2}
		F^{(\xi)}(x)-\lambda \sum_{i=1}^m h_i(x) g_i(x) =\xi^T(F(x)-\lambda \sum_{i=1}^m h_i(x) g_i(x) I_{\ell})\xi \geq \frac{1}{4} F_{\min}.
		\ee
		In the above, $
		F^{(\xi)}(x):=\xi^T F(x) \xi . $
		
	\rev{	
		{\it Case I.}	Suppose that  $ F^{(\xi)}(x) \leq \frac{3F_{\min}}{4}$. Then, we know that $x\notin K$. Let  $x^*\in K$ be a vector such that $\|x-x^*\|_2=d(x,K)$. Since $F(x^*)\succeq F_{\min}\cdot I_{\ell}$, we have that 
		\[
		\baray{ll}
		\frac{F_{\min}}{4} \leq F^{(\xi)}(x^*)-F^{(\xi)}(x) &\leq \max\limits _{s \in \bar{\Delta}}\|\nabla F^{(\xi)}(s)\|_2\|x-x^*\|_2\\
		&  \leq  \frac{2d(2d-1)}{\sqrt{n}+1}    \|x-x^*\|_2 \max\limits _{s \in \bar{\Delta}}|F^{(\xi)}(s)|,    \\
		\earay
		\]
		where the last inequality follows from Theorem \ref{Markov}.   Since $\max\limits_{s \in \bar{\Delta}}|F^{(\xi)}(s)|\leq \|F^{(\xi)}\|_{B,d}$ (see \reff{proB}), it holds that
		\be \label{eq00}
		F_{\min}\leq \max _{s \in \bar{\Delta}}|F^{(\xi)}(s)|\leq \|F^{(\xi)}\|_{B,d}\leq \|F\|_B,
		\ee
		where the last inequality is due to Lemma \ref{lemma3.3}.
		Then, we know that
		\[
		\frac{F_{\min}}{4} \leq \frac{2d(2d-1)}{\sqrt{n}+1}\|F^{(\xi)}\|_{B,d}   \|x-x^*\|_2\leq 2d^2\|F\|_B\|x-x^*\|_2.
		\]
		By the {\L}ojasiewicz inequality \reff{Lojexp2}, we have that 
		\[
		\mathcal{G}(x)\geq \kappa^{-1}d(x,K)^{\eta}\geq \kappa^{-1}\|x-x^*\|_2^{\eta}\geq \frac{1}{{8}^{\eta}\kappa d^{2\eta}}(\frac{F_{\min}}{\|F\|_B})^{\eta}=\delta.
		\]
		From the definition of the function $\mathcal{G}(x)$ as in \reff{def:Loj:G3}, there exists $i_0\in \{1,\dots,m\}$  such that $g_{i_0}(x) \leq-\delta$. Since $h_{i_0}(x)$ is an SOS (see item (iv)) and $g_{i_0}(x) \leq-\delta$, we have $h_{i_0}(x) \geq \frac{1}{2}$ by item (ii). 
		Note that
		\[
		-h_{i_0}(x)g_{i_0}(x)\geq \frac{1}{2} \delta, ~ -h_{i}(x)g_{i}(x) \geq -2v\cdot \max\limits_{s \in \bar{\Delta}} |g_i(s)|\geq  -2v\cdot \|g_i\|_B= -2v~(i=1,\dots,m).
		\]
	In the above, we use the inequality \reff{proB}.	Then, we have that
		$$
		\begin{aligned}
			F^{(\xi)}(x) -\lambda \sum_{i=1}^m h_i(x) g_i(x)& \geq F^{(\xi)}(x)+\lambda \cdot\frac{1}{2} \delta-2 \lambda v(m-1) \\
			& \geq F^{(\xi)}(x)+\lambda \frac{\delta}{4}+\lambda\left(\frac{\delta}{4}-2 v(m-1)\right). \\
		\end{aligned}
		$$
		One can see  from the relation \reff{eq00} that $F^{(\xi)}(x)\geq -\|F^{(\xi)}\|_{B,d}$ and $ F_{\min}\leq \|F\|_B$, which implies that
		\[
		\frac{\delta}{4}-2 v(m-1)\geq 	\frac{\delta}{4}-2 vm\geq \frac{\delta}{4}- \frac{\delta F_{\min}}{10\|F\|_B}  \geq \frac{\delta}{4}-\frac{\delta}{10}>0,
		\]	
		\[
		\lambda \frac{\delta}{4}= \frac{5}{4}\|F\|_{B}\geq \frac{5}{4}\|F^{(\xi)}\|_{B,d}.
		\]
		Then, it follows that
		$$
		\begin{aligned}
			F^{(\xi)}(x) -\lambda \sum_{i=1}^m h_i(x) g_i(x)
			& \geq F^{(\xi)}(x)+\lambda \frac{\delta}{4}+\lambda\left(\frac{\delta}{4}-2 v(m-1)\right) \\
			&\geq -\|F^{(\xi)}\|_{B,d}+\frac{5}{4}\|F^{(\xi)}\|_{B,d}\\
			&\geq \frac{1}{4} \|F^{(\xi)}\|_{B,d} \geq \frac{1}{4} \max _{s \in \bar{\Delta}}|F^{(\xi)}(s)|\\
			&\geq \frac{1}{4} F_{\min},
		\end{aligned}
		$$
		where the second inequality follows from the inequality \reff{proB}.
}

\rev{	
		{\it Case II.}  Suppose that  $ F^{(\xi)}(x) \geq \frac{3F_{\min}}{4}$.  Note that $-h_{i}(x)g_{i}(x) \geq -2v\cdot \|g_i\|_B= -2v$ for $i=1,\dots,m$, and
		\[
		2 m \lambda v=2m\cdot \frac{\delta F_{\min}}{20m\|F\|_B}\cdot 5 \|F\|_B/\delta=\frac{1}{2}F_{\min}.
		\]
		Then, we have that
		\[
		F^{(\xi)}(x) -\lambda \sum_{i=1}^m h_i(x) g_i(x)\geq \frac{3}{4} F_{\min}-2 m \lambda v\geq \frac{1}{4}F_{\min}.
		\]
	This concludes the proof of \reff{pos2}.
		
		Since  $\xi\in \mR^n$ with $\|\xi\|_2=1$ is arbitrary, we have  $P(x)\succeq \frac{1}{4} F_{\min}\cdot I_{\ell}
		$ on $\bar{\Delta}$.
		Note that $x$ can be any fixed point in $\bar{\Delta}$,  we know that $P(x)\succeq \frac{1}{4} F_{\min}\cdot I_{\ell}$ for all $x \in \bar{\Delta}$.
}
		\rev{		Since $\deg(h_i)\leq 10^5n d_{G}^2 \delta^{-2} v^{-1}$ for $i=1,\dots,m$,  we have that
		\[
		\baray{ll}
		\deg(h_ig_i)\leq  10^5 n d_{G}^3 \delta^{-2} v^{-1} &\leq 2\cdot 10^{6}  mn d_{G}^3 \delta^{-3}    \frac{\|F\|_B}{F_{\min}}\\
		& \leq 2\cdot 10^{6} \cdot  {8}^{3\eta} mnd_{G}^3\kappa^{3}d^{6\eta}(\frac{\|F\|_B}{F_{\min}})^{3\eta+1}.     \\	
		\earay
		\]
		By Lemma \ref{lemma3.3}, there exists $\xi_0\in \mR^n$ with $\|\xi_0\|_2=1$ such that $\|P\|_B=\|\xi_0^TP\xi_0\|_{B,\deg(P)}$. Then, we obtain that
		\[
		\baray{ll}
		\|P\|_B=\|\xi_0^TP\xi_0\|_{B,\deg(P)}
		&=
		\|F^{(\xi_0)}-\lambda \sum_{i=1}^m h_i g_i\|_{B,\deg(P)}\\
		&\leq \|F^{(\xi_0)}\|_{B,\deg(P)}+\lambda \sum_{i=1}^m\|h_ig_i\|_{B,\deg(P)}\\
		&\leq \|F^{(\xi_0)}\|_{B,\deg(F)}+\lambda \sum_{i=1}^m\|h_i\|_{B,\deg(P)-\deg(g_i)}\|g_i\|_{B,\deg(g_i)} \\
		&\leq \|F\|_{B}+\lambda m\leq (1+5\delta^{-1}m)\|F\|_B\\
		&\leq   {8}^{\eta+1}m\kappa d^{2\eta}(\frac{\|F\|_B}{F_{\min}})^{\eta}\|F\|_B,\\
		\earay
		\]
	where the second inequality is due to the inequality \reff{proB}.	Note that
		\[
		\baray{ll}
		\deg(P)=\deg(F-\lambda \sum_{i=1}^m h_i g_i I_{\ell})&=\max\{\deg(F),\deg(h_i g_i)~(i=1,\dots,m)\}\\
		&\leq \max\{d, 2\cdot 10^{6} \cdot  {8}^{3\eta} mnd_{G}^3\kappa^{3}d^{6\eta}(\frac{\|F\|_B}{F_{\min}})^{3\eta+1}\}\\
		& =2\cdot 10^{6} \cdot  {8}^{3\eta} mnd_{G}^3\kappa^{3}d^{6\eta}(\frac{\|F\|_B}{F_{\min}})^{3\eta+1}.
		\earay
		\]
			It follows from  Lemma \ref{mat:polya:sca} that all coefficient matrices  of $ P(x)$ in the Bernstein  basis $\{ B_{ \alpha}^{(k)}(x):\alpha \in \mathbb{N}_{k}^n\}$ are positive definite, for
		\[
		\baray{ll}
		k&\geq (16\cdot 10^{6})^2 \cdot {8}^{7\eta} m^3n^2d_{G}^6\kappa^{7}d^{14\eta}(\frac{\|F\|_B}{F_{\min}})^{7\eta+3}   \\
		&>  \left(2\cdot 10^{6} \cdot  {8}^{3\eta} mnd_{G}^3\kappa^{3}d^{6\eta}(\frac{\|F\|_B}{F_{\min}})^{3\eta+1}\right)^2 {8}^{\eta+2}m\kappa d^{2\eta}(\frac{\|F\|_B}{F_{\min}})^{\eta}\frac{\|F\|_B}{F_{\min}}\\
		&\geq \deg(P)^2\frac{\|P\|_B}{\frac{1}{2}F_{\min}},\\
		\earay
		\]
		 i.e.,  there exist matrices $P_{ \alpha}^{(k)} \succ 0$ such that
		\[
		F-\lambda \sum_{i=1}^m h_i g_i I_{\ell}=\sum_{\alpha \in \mathbb{N}_{k}^n} P_{ \alpha}^{(k)} B_{ \alpha}^{k}(x).
		\]

		In the following, we show that
		$\sqrt{n}-x_1-\cdots-x_n\in \qmod{1-\|x\|_2^2}_2$. Consider the optimization 
		\be  \label{nlplo}
		\left\{ \baray{rl}
		\min & \sqrt{n}-x_1-\cdots-x_n  \\
		\st & 1-\|x\|_2^2\geq 0. \\
		\earay \right.
		\ee
		One can verify that the optimal value of 
		\reff{nlplo} is 0. Since $\sqrt{n}-x_1-\cdots-x_n$ and $-(1-\|x\|^2)$ are SOS convex and the feasible set has an interior point, it follows from \cite[Theorem 3.3]{Lasconvex} that the lowest order SOS relaxation of \reff{nlplo} is exact and the optimal values of all SOS relaxations are achievable. Consequently, we have that $\sqrt{n}-x_1-\cdots-x_n\in \qmod{1-\|x\|_2^2}_2$. 
		Similarly, we know that $1+x_1,\cdots,1+x_n\in \qmod{1-\|x\|_2^2}_2$.
		Hence, we have
		{\small
			\[
			\left(\sqrt{n}-x_1-\cdots-x_n\right)^{k-|\alpha|}\left(1+x_1\right)^{\alpha_1} \cdots\left(1+x_n\right)^{\alpha_n} \in \qmod{1-\|x\|_2^2}_{2(k-|\alpha|)+2\alpha_1+\dots+2\alpha_n}=\qmod{1-\|x\|_2^2}_{2k}.
			\]
		}
		Then, from the definition of  $B_{ \alpha}^{k}(x)$, we have  $B_{ \alpha}^{k}(x)\in \qmod{1-\|x\|_2^2}_{2k}$.
		Since  $(1-\|x\|_2^2)\cdot I_{\ell} \in\qmod{G}_{k_0}^{\ell} $ for some $k_0>0$, it follows that  $1-\|x\|_2^2 \in\qmod{G}_{k_0}$, and thus $B_{ \alpha}^{k}(x)\in \qmod{1-\|x\|_2^2}_{2k+k_0}$. Then, the conclusion holds for sufficiently large $C$. Note that
		\[
			K=\left\{x\in\mR^n \mid \frac{g_1(x)}{\left\|g_1\right\|_B}\geq 0,\dots,\frac{g_m(x)}{\left\|g_m\right\|_B}\geq 0\right\}.
			\]
			It follows from Theorem \ref{klpham} that 	$\eta\leq (d_G+1)(3d_G)^{n+m-2}$.
		}
		%\[
		%1+x_i=\frac{1}{2}(1-\|x\|_2^2+\sum\limits_{j\neq i}x_j^2+(1+x_i)^2),
		%\]
		%\[
		%\sqrt{n}-x_1-\cdots-x_n=\sum\limits_{i=1}^n\left(\frac{\sqrt{n}}{2n}(1-\|x\|_2^2)+\sum\limits_{j\neq i}x_j^2+\frac{\sqrt{n}}{2n}(1+x_i)^2\right).
		%\]
		%Then, there exist  $\sigma_{\alpha}\in \qmod{1-\|x\|_2^2}_{ k+n}$ such that 
		%\[
		%F=\lambda \sum_{i=1}^m h_i g_i I_{\ell}+\sum_{\alpha \in \mathbb{N}_{\deg(P)}^n} P_{ \alpha}^{\deg(P)} \sigma_{\alpha}.
		%\]
		%Since $h_i$ are SOS with degrees $\leq k_0+n$, we conclude the result.
		
		$\qed$
	\end{proof}

	%%%%%%%%%%%%%%%%
	\iffalse
	
	\rev{
		In fact, the {\L}ojasiewicz exponent $\eta$ can be estimated explicitly.  It was shown in  \cite[Theorem 3.3]{hapham} that there exists $\kappa>0$ such that 
		\be \label{exp1}
		d(x,K)^{(d+1)(3d)^{n+m-2}} \leq \kappa \mathcal{G}(x), \quad \forall x \in \bar{\Delta}.
		\ee
		The following corollary is direct.
		
		\begin{cor}
			Under the same assumptions of Theorem \ref{comp:matPut2}. If $F(x)\succeq F_{\min} \cdot  I_{\ell} \succ 0$ on $K$,  then $F \in \qmod{G}_k^{\ell}$ for
			\be \nonumber
			k\geq C\cdot {8}^{7(d+1)(3d)^{n+m-2}} m^3n^2d_{G}^6\kappa^{7}d^{14\eta}\left(\frac{\|F\|_B}{F_{\min}}\right)^{7(d+1)(3d)^{n+m-2}+3}.
			\ee
			In the above,  $\kappa$ is the {\L}ojasiewicz constant as in \reff{exp1} and $C>0$ is a constant independent of $F$, $g_1,\dots,g_m$.
		\end{cor}
	}

\fi 
%%%%%%%%%%%%%%%%%%%%

	\section{A scalar representation for polymomial matrix inequalities}
	\label{mat:deg}
	In this section,  we provide  exact estimates on the quantity and degree bounds for polynomials  $d_1, \dots, d_t$ with $d_1,\dots,d_t\in \qmod{G}$,  such that the polynomial matrix inequality $G(x)\succeq 0$ can be equivalently described by the scalar polynomial inequalities $d_1\geq 0$, $\dots$, $d_t\geq 0$. This serves as a quantitative version of the following proposition due to Schm{\"u}dgen \cite{sk09}.

	\begin{prop}[Proposition 9, \cite{sk09}]   \label{finitebas}
		Let $G(x)$ be an $m$-by-$m$ symmetric polynomial matrix  and $G(x) \neq 0$. Then there exist $m$-by-$m$ diagonal \rev{polynomial matrices} $D_r$, $m$-by-$m$ symmetric \rev{polynomial matrices} $X_{ \pm r} $ and polynomials $z_r \in$ $\Sigma[x], r=1, \ldots, t$, such that:
		
		\bit
		
		\item[(i)] $X_{+r} X_{-r}=X_{-r} X_{+r}=z_r I, \rev{D_r}=X_{-r} G X_{-r}^T, z_r^2 G=X_{+r} D_r X_{+r}^T$,
		
		\item[(ii)] For $x \in \mathbb{R}^n$, \rev{$G(x) \succeq 0$} if and only if \rev{$D_r(x) \succeq  0$} for all $r=1, \ldots, t$.
		
		\eit			
	\end{prop}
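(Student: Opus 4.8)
My plan is to first notice that the auxiliary data $(X_{+r},z_r)$ are forced once the $X_{-r}$ are in hand. Given any $X_{-r}\in\mathbb{R}[x]^{m\times m}$, put $D_r:=X_{-r}GX_{-r}^{T}$, $X_{+r}:=\det(X_{-r})\operatorname{adj}(X_{-r})$, and $z_r:=\det(X_{-r})^{2}$. Since $X\operatorname{adj}(X)=\operatorname{adj}(X)X=\det(X)I$, one gets $X_{+r}X_{-r}=X_{-r}X_{+r}=z_rI$, hence $z_r^{2}G=X_{+r}D_rX_{+r}^{T}$, and $z_r\in\Sigma[x]$ because it is a perfect square. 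So it suffices to produce finitely many $X_{-1},\dots,X_{-t}\in\mathbb{R}[x]^{m\times m}$ with each $D_r:=X_{-r}GX_{-r}^{T}$ \emph{diagonal} and $G(x)\succeq0\Leftrightarrow D_r(x)\succeq0\,(r=1,\dots,t)$ for all $x\in\mathbb{R}^n$. (The symmetry of the $X_{\pm r}$ demanded in the statement is an extra normalization I would take from \cite{sk09}; I also note in passing that each diagonal entry of such a $D_r$ equals $v^{T}Gv$ for a row $v$ of $X_{-r}$, hence automatically lies in $\qmod{G}$, which is what makes this form of the proposition useful later.)

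\textbf{Step 2: induction on $m$ and the Schur-type pivot.} I would argue by induction on $m$; $m=1$ is trivial ($t=1$, $X_{-1}=I$). For $m\ge2$ with $G\ne0$, a preliminary \emph{constant} invertible congruence $M$ (a permutation times elementary unimodular matrices, $\det M=\pm1$, which leaves ``$z_r$ a square'' intact) reduces to the case $G_{11}\ne0$: permute a nonzero diagonal entry into position $(1,1)$ if one exists, and otherwise, using that some $G_{ij}\ne0$ with $i\ne j$, add row and column $i$ to row and column $j$ so that the $(j,j)$ entry becomes $2G_{ij}\ne0$. Writing $G=\begin{pmatrix}G_{11}&g^{T}\\g&\widehat G\end{pmatrix}$ and $X_0=\begin{pmatrix}1&0\\-g&G_{11}I_{m-1}\end{pmatrix}$, one computes $X_0GX_0^{T}=\operatorname{diag}(G_{11})\oplus(G_{11}\widetilde G)$ with $\widetilde G:=G_{11}\widehat G-gg^{T}$, and the pointwise fact I would use (by completing the square, separating $G_{11}(x)>0$ from $G_{11}(x)=0$) is
\[
G(x)\succeq0 \ \Longleftrightarrow\ G_{11}(x)\ge0\ \text{and}\ \widehat G(x)\succeq0\ \text{and}\ \widetilde G(x)\succeq0 .
\]

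\textbf{Step 3: assemble the family.} I would encode $G_{11}\ge0$ by the single diagonal congruence $\operatorname{diag}(G_{11},0,\dots,0)=E_{11}GE_{11}^{T}$ with $E_{11}:=\operatorname{diag}(1,0,\dots,0)$; encode $\widehat G\succeq0$ by applying the inductive hypothesis to the block $\widehat G$ and padding each diagonalizer $Z_u$ to $\operatorname{diag}(0)\oplus Z_u$, giving the diagonal congruences $\operatorname{diag}(0)\oplus(Z_u\widehat GZ_u^{T})$ of $G$; and attack $\widetilde G\succeq0$ by applying the inductive hypothesis to $\widetilde G$, noting that $G_{11}\widetilde G=W_0GW_0^{T}$ for $W_0:=(-g\mid G_{11}I_{m-1})$, so that the padded matrices $\operatorname{diag}(0)\oplus(G_{11}\,Y_s\widetilde GY_s^{T})$ are diagonal congruences of $G$. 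The catch is that this last family carries a spurious factor $G_{11}$ and hence, on the locus $\{G_{11}=0\}$ where the pivot degenerates, loses exactly the information ``$g=0$''. To recover it one adjoins further branches — pivots on the other diagonal entries, and unimodular moves that create a nonzero diagonal entry on the loci where all current pivots vanish — so that every point is non-degenerate for at least one branch. This is precisely where the flexibility of Schmüdgen's formulation is used: because $D_r$ only has to represent $G$ up to the square factor $z_r^{2}$, it may legitimately carry pivot powers, and several pivots are combined. Once the finite family is built so that the conjunction of ``$D_r(x)\succeq0$'' equals ``$G(x)\succeq0$'' on all of $\mathbb{R}^n$, Step 1 supplies $X_{+r},z_r$ and finishes the proof; finiteness is clear since each recursive call spawns finitely many smaller ones.

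\textbf{Main obstacle.} The adjugate trick of Step 1 and the Schur-complement equivalence of Step 2 are mechanical. All the difficulty sits in Step 3: over $\mathbb{R}[x]$ one cannot divide by a pivot, so every congruence drags along powers of that pivot and becomes singular — forgetting part of the positivity condition — on the pivot's zero set; consequently the tempting reduction ``$G\succeq0\iff G_{11}\ge0\wedge\widetilde G\succeq0$'' is false and must be supplemented. Organizing a finite system of pivots and auxiliary congruences that covers every degenerate locus, while bookkeeping the compensating multipliers $z_r$, is the crux, and there I would follow the construction of \cite{sk09} in detail.
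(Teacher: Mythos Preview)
The paper does not give its own proof of this proposition: it is quoted verbatim from Schm\"udgen \cite{sk09}. The closest thing to a proof in the paper is the argument for the quantitative Theorem~\ref{thm:scapoly}, and your outline is very much in the same spirit.

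Your Steps~1--2 are fine (the adjugate trick recovers $X_{+r},z_r$ from $X_{-r}$; the pointwise equivalence $G\succeq0\Leftrightarrow G_{11}\ge0\wedge\widehat G\succeq0\wedge\widetilde G\succeq0$ is correct). The gap is exactly the one you flag in Step~3 and then defer: because only $G_{11}\widetilde G$, not $\widetilde G$ itself, is a congruence of $G$, every inductively produced diagonal congruence for the $\widetilde G$-branch carries a factor $G_{11}$ and becomes vacuous on $\{G_{11}=0\}$, so the condition ``$g=0$ there'' is lost. You say you would adjoin ``further branches'' and follow \cite{sk09}; the paper's proof of Theorem~\ref{thm:scapoly} shows one clean way to do this explicitly. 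Instead of a single pivot, run the Schur step simultaneously for \emph{all} $\binom{m+1}{2}$ pivots $s_{ii}=G_{ii}$ and $s_{ij}=G_{ii}+2G_{ij}+G_{jj}$ ($i<j$), obtained by the constant congruences $T_{ij}$. Each $s_{ij}$ and each Schur block $B^{(ij)}=s_{ij}(s_{ij}H^{(ij)}-\beta^{(ij)}(\beta^{(ij)})^T)$ is a congruence of $G$, and one has the dichotomy: if every $s_{ij}(x)=0$ then all entries of $G(x)$ vanish, while if some $s_{i_0j_0}(x)\ne0$ the identity $s_{i_0j_0}^{4}\,T_{i_0j_0}GT_{i_0j_0}^{T}=X_{+}^{(i_0j_0)}G^{(i_0j_0)}(X_{+}^{(i_0j_0)})^{T}$ recovers $G(x)\succeq0$ from $s_{i_0j_0}(x)>0$ and $B^{(i_0j_0)}(x)\succeq0$. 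Then one recurses on each $(m-1)\times(m-1)$ block $B^{(ij)}$. This is precisely the finite covering-by-pivots you were reaching for; filling it in completes your sketch. (A side remark: your adjugate $X_{+r}$ is not symmetric, but neither are the $X_{\pm}^{(ij)}$ in the paper, so the symmetry clause in the stated proposition seems to be an inessential artefact of the quotation rather than something you must enforce.)
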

	
	Let
	$ 
	K=\{x\in \mR^n \mid G(x) \succeq  0\}.
	$
	A direct corollary of Proposition \ref{finitebas}  is that there exist finitely many polynomials in  $\qmod{G}$ such that  $K$ can be equivalently described by these polynomials. \rev{We also refer to \cite[Proposition 5]{cj} for a generalization of Proposition \ref{finitebas}, where the author considers  the case that $K$ can be described by possibly infinitely many polynomial matrix inequalities.} Denote
	\[
	\theta(m)=\sum\limits_{i=1}^m\left(\prod_{k=m+1-i}^{m} \frac{k(k+1)}{2}\right). 
	\]
	Recall that
	$
	d_{G}=\max\{\deg(G_{ij}),~1\leq i, j\leq m\}.
	$
	In the following, we provide a quantitative version of Proposition \ref{finitebas}.

	\begin{theorem} \label{thm:scapoly}
		Suppose $G(x)$ is an $m$-by-$m$ symmetric polynomial matrix  with $m\geq 2$. Then there exist  $d_1,d_2,\dots,d_{\theta(m)} \in QM[G]_{3^{m-1}d_{G}}$ such that 
		\[
		K=\{x\in \mR^n\mid d_1(x)\geq0,\dots,d_{\theta(m)}(x)\geq 0 \}.
		\]
		
	\end{theorem}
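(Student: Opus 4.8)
The plan is to prove the statement by induction on $m$, making the recursive elimination that underlies Schm\"udgen's Proposition \ref{finitebas} quantitative. It is convenient to prove a slightly stronger claim by induction: for every $p\times p$ symmetric $H\in\mR[x]^{p\times p}$ of degree $d_H$ there exist $v_1,\dots,v_{\theta(p)}\in\mR[x]^{p}$ with $\deg v_i\le \tfrac{3^{p-1}-1}{2}\,d_H$ such that
\[
\{x\in\mR^n:H(x)\succeq 0\}=\{x\in\mR^n: v_i(x)^TH(x)v_i(x)\ge 0,\ i=1,\dots,\theta(p)\}.
\]
Applied to $p=m$, $H=G$, this yields the theorem: the polynomials $d_i:=v_i^TGv_i$ satisfy $\deg d_i\le 2\deg v_i+d_G\le 3^{m-1}d_G$, and since $\deg v_i\le\tfrac{3^{m-1}-1}{2}d_G\le\lceil 3^{m-1}d_G/2-d_G/2\rceil$, the single term $v_i^TGv_i$ (with $S=0$) is an admissible representation witnessing $d_i\in\qmod{G}_{3^{m-1}d_G}$. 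The base case $p=1$ is trivial: $\{x:H(x)\succeq 0\}=\{x:H_{11}(x)\ge 0\}$, $H_{11}=e_1^THe_1$, $\theta(1)=1$, $\deg e_1=0$.

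For the inductive step, fix $H$ of size $p=m\ge 2$ and degree $d_H$. The engine is the Schur-complement congruence: writing $H=\bpm a & b^T\\ b & C\epm$ with $a=H_{11}\in\mR[x]$, $b\in\mR[x]^{m-1}$, $C\in\mR[x]^{(m-1)\times(m-1)}$, and $X_-:=\bpm 1 & 0\\ -b & aI_{m-1}\epm$ (so $\deg X_-\le d_H$, $\det X_-=a^{m-1}$), one has $X_-HX_-^T=\diag\bigl(a,\ a(aC-bb^T)\bigr)$; here $\deg a\le d_H$ and the trailing block $a(aC-bb^T)$ has degree $\le 3d_H$. To handle points where $a$ is forced to vanish, I run this construction after each member of a fixed family of $\binom{m+1}{2}$ invertible \emph{constant} changes of basis $U_b$: for $1\le i\le m$, a permutation sending $e_1\mapsto e_i$ (producing pivot $H_{ii}$); and for $1\le i<j\le m$, an invertible $U_b$ with $U_be_1=e_i+e_j$ (producing pivot $(e_i+e_j)^TH(e_i+e_j)=H_{ii}+2H_{ij}+H_{jj}$). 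In branch $b$, with $N_b:=U_bX_{-,b}^T$ and $\Pi:=\bpm 0\\ I_{m-1}\epm$, set $M_b:=N_b\Pi$, so that the trailing $(m-1)\times(m-1)$ block of $X_{-,b}(U_b^THU_b)X_{-,b}^T=N_b^THN_b$ equals $H'_b:=M_b^THM_b$; then $\deg M_b\le d_H$, $H'_b$ is $(m-1)\times(m-1)$ of degree $\le 3d_H$, and the branch pivot equals $(U_be_1)^TH(U_be_1)$ with $U_be_1$ a constant vector.

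The key point is the pointwise equivalence: $H(x)\succeq 0$ iff for every branch $b$ one has $(U_be_1)^TH(x)(U_be_1)\ge 0$ and $H'_b(x)\succeq 0$. The forward implication is immediate, since congruences preserve positive semidefiniteness. For the converse, suppose $H(x)\not\succeq 0$. If some branch pivot is negative at $x$, the corresponding condition fails. Otherwise all branch pivots are $\ge 0$ at $x$; if some branch pivot is $>0$, then $X_{-,b}(x)$ is invertible, so $\diag\bigl(\text{pivot}_b(x),H'_b(x)\bigr)=N_b(x)^TH(x)N_b(x)$ is congruent to $H(x)\not\succeq 0$, forcing $H'_b(x)\not\succeq 0$ (as $\text{pivot}_b(x)>0$), whence the induction hypothesis for $H'_b$ yields a vector exhibiting negativity; and if all branch pivots vanish at $x$, then $H_{ii}(x)=0$ for all $i$ (diagonal branches) and hence $H_{ij}(x)=0$ for all $i<j$ (off-diagonal branches), i.e.\ $H(x)=0\succeq 0$, a contradiction. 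Pulling back through $H'_b=M_b^THM_b$: each vector $v'$ produced for $H'_b$ gives $v:=M_bv'$ with $v^THv=v'^TH'_bv'$ and $\deg v\le\deg M_b+\deg v'\le d_H+\tfrac{3^{m-2}-1}{2}(3d_H)=\tfrac{3^{m-1}-1}{2}d_H$, while each branch pivot equals $v^THv$ for the constant $v=U_be_1$. Counting: branch $b$ contributes $1+\theta(m-1)$ vectors, and there are $\binom{m+1}{2}=\tfrac{m(m+1)}{2}$ branches, so the total is $\tfrac{m(m+1)}{2}\bigl(\theta(m-1)+1\bigr)$, which equals $\theta(m)$ by a short rearrangement of the defining sum \eqref{int:num:scapol} (equivalently, $\theta$ obeys $\theta(m)=\tfrac{m(m+1)}{2}\bigl(\theta(m-1)+1\bigr)$ with $\theta(1)=1$). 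This closes the induction.

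The step I expect to require the most care is the converse of the pointwise equivalence: verifying that this particular family of $\binom{m+1}{2}$ changes of basis is rich enough to expose \emph{every} failure of $H(x)\succeq 0$, including at singular points where the natural pivots all vanish. That is the combinatorial core of Schm\"udgen's Proposition \ref{finitebas}; once it is settled, everything else — that each $U_b$ is constant, that each Schur step at most triples the degree, that every output polynomial is visibly of the form $v^TGv$, and that the cardinalities obey $\theta(m)=\tfrac{m(m+1)}{2}(\theta(m-1)+1)$ — is routine bookkeeping.
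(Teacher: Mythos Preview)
Your proposal is correct and follows essentially the same approach as the paper: induction on the matrix size, using the family of $\binom{m+1}{2}$ pivots $H_{ii}$ and $H_{ii}+2H_{ij}+H_{jj}$ together with a Schur-complement congruence to reduce to $(m-1)\times(m-1)$ blocks of degree $\le 3d_H$, and the same counting recursion $\theta(m)=\tfrac{m(m+1)}{2}(\theta(m-1)+1)$. Your strengthening of the induction hypothesis to track explicit vectors $v_i$ with $d_i=v_i^TGv_i$ is a mild packaging improvement---it makes the membership $d_i\in\qmod{G}_{3^{m-1}d_G}$ and the degree bound transparent, whereas the paper leaves the transfer from $\qmod{B^{(ij)}}$ to $\qmod{G}$ (with degree control) implicit---and your base case $p=1$ is cleaner than the paper's explicit $m=2$ verification, but the mathematical content is the same.
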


	\begin{proof}
		\rev{
		We prove the result by applying induction on the matrix size $m$.
		Suppose that  $m=2$ and $G(x)=[G_{ij}(x)]_{i,j=1,2}$. 
		In the following, we show that
		the inequality $G(x)=[G_{ij}(x)]_{i,j=1,2}\succeq 0$ can be equivalently  described as 
		\[
		G_{11}\geq 0,~G_{22}\geq 0,~G_{11}(G_{11}G_{22}-G_{12}^2)\geq 0,
		\]
		\[
		G_{11}+2G_{12}+G_{22}\geq 0,~G_{22}(G_{11}G_{22}-G_{12}^2)\geq 0, 
		\]
		\[
		(G_{11}+2G_{12}+G_{22})((G_{11}+2G_{12}+G_{22})G_{22}-(G_{12}+G_{22})^2)\geq 0.
		\]
		If $G(x)\succeq 0$, we know that $G_{11}(x)\geq 0$, $G_{22}(x)\geq 0$, $G_{11}(x)\cdot G_{22}(x)-G_{12}(x)^2\geq 0$, which imply the above inequalities. Suppose  $x\in \mR^n$ satisfies these inequalities. If $G_{11}(x)\neq 0$ ($G_{22}(x)\neq 0$, respectively), it follows from the inequality $G_{11}(x)\cdot(G_{11}(x)\cdot G_{22}(x)-G_{12}(x)^2)\geq 0$ ~($G_{22}(x)\cdot(G_{11}(x)\cdot G_{22}(x)-G_{12}(x)^2)\geq 0$, respectively) that $G_{11}(x)\cdot G_{22}(x)-G_{12}(x)^2\geq 0$. This implies that $G(x)\succeq 0$. Otherwise, if $G_{11}(x)=G_{22}(x)=0$, the last inequality reduces to $-2G_{12}(x)^3\geq 0$ and the inequality $G_{11}(x)+2G_{12}(x)+G_{22}(x)\geq 0$ reduces to $G_{12}(x)\geq 0$. Then, we have that $G_{12}=0$ and $G(x)=0$.
	}
			\rev{	Note that
		\[
		\bbm
		G_{11}& 0\\
		-G_{12}& G_{11}\\
		\ebm \cdot G \cdot \bbm
		G_{11}& -G_{12}\\
		0& G_{11}\\
		\ebm=\bbm
		G_{11}^3& 0\\
		0& G_{11}(G_{11}G_{22}-G_{12}^2)\\
		\ebm,
		\]
		
		\[
		\bbm
		G_{22}& -G_{12}\\
		0& G_{22}\\
		\ebm \cdot G \cdot  \bbm
		G_{22}& 0\\
		-G_{12}& G_{22}\\
		\ebm=\bbm
		G_{22}(G_{11}G_{22}-G_{12}^2)& 0\\
		0& G_{22}^3\\
		\ebm,
		\]
		\[
		A^T\cdot  G \cdot A
		=\bbm
		(G_{11}+2G_{12}+G_{22})^3& 0\\
		0& (G_{11}+2G_{12}+G_{22})((G_{11}+2G_{12}+G_{22})G_{22}-(G_{12}+G_{22})^2)\\
		\ebm,
		\]
		where
		\[
		A= \bbm
		1& 0\\
		1& 1\\
		\ebm\cdot \bbm
		G_{11}+2G_{12}+G_{22}& -G_{22}-G_{12}\\
		0& G_{11}+2G_{12}+G_{22}\\
		\ebm.
		\]
		We know that  all the above scalar inequalities involve  polynomials  that belong to $\qmod{G}_{3d_{G}}$.
		Note that $\theta(2)=6$. Hence, the result holds for $m=2$. 
	}
		
	\rev{	Suppose the result holds for $m=k$ and  consider the case that $m=k+1$. In the following,  we construct  block diagonal polynomial matrices with a $1\times 1$ block and a $k\times k$ block, such that the inequality $G(x)\succeq 0$ can be equivalently described by the main diagonal blocks of these matrices. This reduction in matrix size allows us to apply induction.}
		For $i\in\{1, \ldots, n\}$,    let $P_{i}$ denote the permutation matrix that swaps the first row and the $i$-th row. For $1\leq i < j \leq k+1$, denote the row operation matrix  $Q^{(ij)}$ as 
		\[
		(Q^{(ij)})_{uv}=\left\{
		\baray{ll}
		1,& \text{if~}(u,v)=(1,1),\dots,(k+1,k+1)\text{~or~if~} (u,v)=(i,j),\\
		0,& \text{otherwise}.\\
		\earay
		\right.
		\]
		\rev{Note that $Q^{(ij)}G(Q^{(ij)})^T$ is the matrix obtained by  first adding the $j$-th row of $G$ to the $i$-th row and then adding the $j$-th column  to the $i$-th column.}
		Let
		\[
		T_{i i}=P_{i}~(i=1,\dots,k+1),~T_{i j}=P_{i}\cdot Q^{(ij)}~(1\leq i < j \leq k+1).
		\]
	\rev{Clearly,  for $i<j$, $	T_{i j} G T_{i j}^\mathrm{T}$ is the matrix obtained by  swapping the first row with the $i$-th row of $Q^{(ij)}G(Q^{(ij)})^T$  and then swapping the first column with  the $i$-th column. Hence, we know that for $1\leq i \leq j \leq k+1$, the  matrices $T_{i j}$    are all invertible and it holds that}
		\[
		T_{i j} G T_{i j}^\mathrm{T} =\left[\begin{array}{cc}
			s_{ij} & (\beta^{(ij)})^\mathrm{T} \\
			\beta^{(ij)} & H^{(ij)}
		\end{array}\right],
		\]	
		where $ H^{(ij)}$is a $k$-by-$k$ symmetric polynomial matrix, $\beta^{(ij)} \in \mR[x]^{k}$ and
		\[
		s_{ii}=G_{ii}~(i=1,\dots,k+1),~	s_{ij}=G_{ii}+G_{jj}+2G_{ij}~(1\leq i < j \leq k+1). 
		\]
		Denote the matrices
		\[
		X_{+}^{(ij)}=\left[\begin{array}{cc}
			s_{ij} &~ 0 \\
			\beta^{(ij)} & ~	s_{ij}\cdot I_{k}
		\end{array}\right],~ X_{-}^{(ij)}=\left[\begin{array}{cc}
			s_{ij} & ~0 \\
			-\beta^{(ij)} & ~	s_{ij}\cdot I_{k}
		\end{array}\right] .
		\]
		%\rev{ It can be used to diagonalize the matrix $T_{i j} G T_{i j}^\mathrm{T}$ while preserving operations in $\mR[x]$.}
		By  computations, we have that
		
		\be \label{det:s}
		X_{-}^{(ij)} X_{+}^{(ij)}=X_{+}^{(ij)}X_{-}^{(ij)}=s_{ij}^2 \cdot I_{k+1},
		\ee
		\be \label{det:G}
		(X_{-}^{(ij)}T_{i j})\cdot G\cdot (X_{-}^{(ij)}T_{i j})^{\mathrm{T}}=G^{(ij)},
		\ee 
		
		\be  \label{det:Gij}
		s_{ij}^4T_{i j} G T_{i j}^\mathrm{T}=	X_{+}^{(ij)}\cdot G^{(ij)}	(X_{+}^{(ij)})^{\mathrm{T}},
		\ee
		where 
		\[
		G^{(ij)}=\left[\begin{array}{cc}
			s_{ij}^3 & 0 \\
			0 & s_{ij}\cdot \left(s_{ij} H^{(ij)}-\beta^{(ij)}(\beta^{(ij)})^\mathrm{T} \right)
		\end{array}\right].
		\]	
		Let  
		\[
		B^{(ij)}=s_{ij} \left(s_{ij} H^{(ij)}-\beta^{(ij)}(\beta^{(ij)})^\mathrm{T} \right)~(1\leq i\leq j\leq k+1).
		\]
		Since $T_{ij}$ are scalar matrices,  we have 
		$
		\deg(s_{ij})\leq d_{G},~ \deg(B^{(ij)})\leq 3d_{G}.
		$
		
		In the following, we show that 
		\be \label{equi:K}
		K=\{x\in \mR^n\mid s_{ij}(x)\geq 0, B^{(ij)}(x)\succeq 0,~1\leq i\leq j\leq k+1\}.
		\ee
		If $x\in K$, i.e.,  $G(x) \succeq  0$, it follows from \reff{det:G} that $G^{(ij)}(x)\succeq 0$, which implies that
		\be \label{fir:equK}
		s_{ij}(x)\geq 0, B^{(ij)}(x)\succeq 0,\quad \forall 1\leq i\leq j\leq k+1.
		\ee	
		For the converse part, suppose  $x\in \mR^n$ satisfies \reff{fir:equK}. If $s_{ij}=0$ for all $1\leq i\leq j\leq k+1$, then we have  $G_{ij}(x)=0$ for all $1\leq i,j\leq k+1$, leading to $G(x)=0$. If there exist $1\leq  i_0\leq j_0\leq k+1$ such that $s_{i_0j_0}(x)\neq0$,  the relation \reff{det:Gij} gives that
		\be  \label{det:Gij1}
		G(x) = s_{i_0j_0}^{-4}(x)T_{i j}^{-1}	X_{+}^{(i_0j_0)}\cdot G^{(i_0j_0)}	(X_{+}^{(i_0j_0)})^{\mathrm{T}}(T_{i_0j_0}^\mathrm{T})^{-1}.
		\ee
		Since $ G^{(i_0j_0)}(x)\succeq 0$, we conclude that $G(x)\succeq 0$.

	Note that each  $B^{(ij)}$ is a $k$-by-$k$ symmetric polynomial matrix with  $\deg(B^{(ij)})\leq 3d_{G}$ for $1\leq i\leq j\leq k+1$.	By  induction,    there exist $d_1^{(ij)},\dots,d_{\theta(k)}^{(ij)} \in QM[G]_{3^{k}d_{G}}$ such that 
		\[
		\{x\in \mR^n\mid B^{(ij)}(x)\succeq 0\}=\{x\in \mR^n\mid d_1^{(ij)}(x)\geq 0,\dots,d_{\theta(k)}^{(ij)}(x)\geq 0 \}.
		\]
		Combining this with the relation \reff{equi:K}, we have that
		\[
		K=\{x\in \mR^n\mid s_{ij}(x)\geq 0, d_t^{(ij)}(x)\succeq 0,~1\leq i\leq j\leq k+1,1\leq t\leq \theta(k)\}.
		\] 
		The total number of the above constraints  is 
		$
		\frac{1}{2} (k+1)(k+2)+\frac{1}{2}(k+1)(k+2)\theta(k), 
		$ 
		which equals to $\theta(k+1)$. 
		Hence, we have completed the proof.
		
		$\qed$ 	
	\end{proof}

	~\\
	\noindent
	{\bf Remark.}
	There exist  alternative ways to obtain   finitely many scalar polynomial inequalities that describe $K$.
	For instance, consider the  characteristic polynomial $det[ \lambda I_m-G(x)]$  of $G(x)$. It can be expressed as
	\[
	det[\lambda  I_m-G(x)]=\lambda^m+\sum\limits_{i=1}^m (-1)^ig_i(x)\lambda^{m-i}.
	\]
	Since $G(x)$ is symmetric,  all eigenvalues are real, and  $det[ \lambda I_m-G(x)]$ has only real roots. We know that $G(x)\succeq 0 $ if and only if $g_i(x)\geq 0$ for $i=1,\dots,m$. \rev{Note that $g_1(x)=\operatorname{trace}(G(x))$, which is always in $\qmod{G}$. Hence, if the polynomials $\rev{g_2(x)},\dots,g_m(x) \in \qmod{G}$, the quantity    $\theta(m)$ and the degree $3^{m-1}d_G$ as stated in Theorem \ref{thm:scapoly}   can be improved to $m$ and $md_G$, respectively. However,  $g_2(x),\dots,g_m(x)$ typically do not belong to $\qmod{G}$.}

	\section{Degree bounds for matrix Positivstellens{\"a}tze}
	\label{mat:mat}
	
	This section is mainly to   prove  Theorems \ref{intro:comp:matPut3} and \ref{int:comp:unb}.
	As a byproduct, we can get   a  polynomial bound on the convergence rate of matrix SOS relaxations for solving polynomial matrix optimization, which resolves an open question  raised by Dinh and Pham \cite{dihu}. Let
	\be \label{sec5:K}
	K=\{x\in \mR^n \mid G(x) \succeq  0\}, 
	\ee 
	where $G(x)$  is an  $m$-by-$m$ symmetric polynomial  matrix.

	\subsection{Proof of Theorem \ref{intro:comp:matPut3}}
	For the polynomial matrix $G(x)$, let $d_1,\dots,d_{\theta(m)}$ be the polynomials as in Theorem \ref{thm:scapoly}. Note that we assume 
	\[
	K\subseteq \{x\in\mR^n\mid 1-\|x\|_2^2\geq0\}.
	\]
	Denote
	\be
	\mathcal{G}(x)=-\min \left(\rev{\frac{d_1(x)}{\left\|d_1\right\|_B}}, \ldots, \rev{\frac{d_{\theta(m)}(x)}{\left\|d_{\theta(m)}\right\|_B}},0\right). 
	\ee
	If $\mathcal{G}(x)=0$, we have that  $x\in K$, i.e., $d(x,K)=0$. 
	By Corollary \ref{loj}, there exist positive constants $\eta$, $\kappa$ such that the following {\L}ojasiewicz inequality holds:
	\be  \label{def:Loj:G4}
	d(x,K)^{\eta} \leq \kappa \mathcal{G}(x), ~  \forall x \in \bar{\Delta}.
	\ee
	
	In the following, we give the proof of  Theorem \ref{intro:comp:matPut3}. 
	\begin{theorem}
		\label{comp:matPut3}
		Let $K$ be as in \reff{sec5:K} with $m\geq 2$.	Suppose that $F(x)$ is an  $\ell$-by-$\ell$ symmetric polynomial matrix with $\deg(F)=d$, and $ (1-\|x\|_2^2)\cdot I_{\ell}\in \qmod{G}^{\ell}$.
		If $F(x)\succeq F_{\min} \cdot  I_{\ell} \succ 0$ on $K$,  then $F \in \qmod{G}_k^{\ell}$ for
		\be \label{bound2}
		k\geq C \cdot  {8}^{7\eta}\cdot3^{6(m-1)} \theta(m)^3n^2d_{G}^{6}\kappa^{7}d^{14\eta}\left(\frac{\|F\|_B}{F_{\min}}\right)^{7\eta+3}.
		\ee
		In the above,  $\kappa, \eta$ are respectively the {\L}ojasiewicz constant and exponent  in \reff{def:Loj:G4} and $C>0$ is a constant independent of $F$, $G$. \rev{Furthermore, the {\L}ojasiewicz exponent can be estimated as 
			\[
			\eta\leq (3^{m-1}d_G+1)(3^{m}d_G)^{n+\theta(m)-2}.
			\]
		}
	\end{theorem}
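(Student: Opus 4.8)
The plan is to reduce the matrix polynomial matrix inequality case directly to the scalar case that was already handled in Theorem~\ref{comp:matPut2}. The bridge is Theorem~\ref{thm:scapoly}: for the $m$-by-$m$ symmetric polynomial matrix $G(x)$ there exist polynomials $d_1,\dots,d_{\theta(m)}\in\qmod{G}_{3^{m-1}d_G}$ such that $K=\{x\in\mR^n\mid d_1(x)\ge 0,\dots,d_{\theta(m)}(x)\ge 0\}$. So first I would simply apply Theorem~\ref{comp:matPut2} to the symmetric polynomial matrix $F(x)$ and the scalar description $g_i:=d_i$, $i=1,\dots,\theta(m)$ of $K$. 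The hypotheses match: $F\succeq F_{\min}I_\ell\succ 0$ on $K$, and $(1-\|x\|_2^2)I_\ell\in\qmod{G}^\ell$. Here one must note that $\qmod{d_1,\dots,d_{\theta(m)}}^\ell\subseteq\qmod{G}^\ell$ because each $d_i\in\qmod{G}$ (so $P_i^T\,\mathrm{diag}(d_1,\dots,d_{\theta(m)})\,P_i$ lands in $\qmod{G}^\ell$), hence the output representation $F\in\qmod{\mathrm{diag}(d_i)}_k^\ell$ yields $F\in\qmod{G}_{k'}^\ell$ for a possibly slightly larger $k'$ absorbed into the constant $C$ (exactly as in the last paragraph of the proof of Theorem~\ref{comp:matPut2}, where $B_\alpha^k(x)$ is pushed into $\qmod{G}^\ell$ using $(1-\|x\|_2^2)I_\ell\in\qmod{G}^\ell$).

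Next I would substitute the explicit parameters coming from Theorem~\ref{thm:scapoly} into bound~\reff{bound1}. In Theorem~\ref{comp:matPut2} the relevant quantities for the constraint list $d_1,\dots,d_{\theta(m)}$ are: the number of constraints is $\theta(m)$ (replacing $m$), the degree bound is $\deg(d_i)\le 3^{m-1}d_G$ (replacing $d_G$), and the {\L}ojasiewicz data $\kappa,\eta$ are those of \reff{def:Loj:G4}, namely the pair attached to $\mathcal{G}(x)=-\min(d_1/\|d_1\|_B,\dots,d_{\theta(m)}/\|d_{\theta(m)}\|_B,0)$. Plugging $m\mapsto\theta(m)$ and $d_G\mapsto 3^{m-1}d_G$ into \reff{bound1} gives a factor $\theta(m)^3$ and a factor $(3^{m-1}d_G)^6=3^{6(m-1)}d_G^6$; collecting everything reproduces exactly \reff{bound2}. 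The Bernstein-norm factor $\|F\|_B$ and the degree $d$ of $F$ are unchanged since $F$ itself is unchanged.

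For the final sentence about the {\L}ojasiewicz exponent estimate, I would invoke Theorem~\ref{klpham} (the Ha--Pham bound) applied to the set $K=\{x\mid d_1(x)/\|d_1\|_B\ge 0,\dots,d_{\theta(m)}(x)/\|d_{\theta(m)}\|_B\ge 0\}$. Here there are no equality constraints ($l=0$), there are $s=\theta(m)$ inequality constraints, the ambient dimension is $n$, and each $d_i$ has degree $\le 3^{m-1}d_G$, so the degree parameter entering $\mathcal{R}$ is $3^{m-1}d_G+1$. Theorem~\ref{klpham} then gives, for the compact set $\bar\Delta$, an inequality $d(x,K)^{\mathcal{R}(n+\theta(m)-1,\,3^{m-1}d_G+1)}\le -c\min(\dots,0)$ with $\mathcal{R}(a,b)=b(3b-3)^{a-1}$. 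Evaluating: $\mathcal{R}(n+\theta(m)-1,3^{m-1}d_G+1)=(3^{m-1}d_G+1)\bigl(3(3^{m-1}d_G+1)-3\bigr)^{n+\theta(m)-2}=(3^{m-1}d_G+1)(3^m d_G)^{n+\theta(m)-2}$, which is precisely the claimed bound on $\eta$. (A small point to mention: since $\mathcal{G}$ normalizes each $d_i$ by its Bernstein norm, one must observe that this scaling does not affect the exponent, only the constant $\kappa$ — the same remark used inside the proof of Theorem~\ref{comp:matPut2}.)

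The routine-but-nontrivial bookkeeping — and the only place where care is genuinely needed — is the passage $\qmod{\mathrm{diag}(d_i)}_k^\ell \rightsquigarrow \qmod{G}_{k'}^\ell$ with control on $k'$: one rewrites each product $\sigma\cdot d_i$ (with $\sigma\in\Sigma[x]^\ell$, and $d_i=P_i^TGP_i+S_i$ with $S_i\in\Sigma[x]$, $P_i\in\mR[x]^{m\times 1}$, $\deg$ bounded by $3^{m-1}d_G$) as an element of $\qmod{G}^\ell$ with degree increased by at most $\deg d_i+\deg(\text{witnesses})$, a bounded amount depending only on $G$, hence absorbed into $C$. I expect no substantive obstacle here — everything is a direct specialization of Theorem~\ref{comp:matPut2} via Theorem~\ref{thm:scapoly} and Theorem~\ref{klpham}; the ``hard work'' was already done in Sections~\ref{Matrix:poly} and~\ref{mat:deg}.
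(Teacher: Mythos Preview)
Your approach is essentially the paper's: reduce via Theorem~\ref{thm:scapoly} to the scalar-constraint setting, invoke Theorem~\ref{comp:matPut2}, substitute $m\mapsto\theta(m)$ and $d_G\mapsto 3^{m-1}d_G$, then push the resulting representation back into $\qmod{G}^\ell$ using $d_i\in\qmod{G}_{3^{m-1}d_G}$; the exponent computation via Theorem~\ref{klpham} is exactly right.

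There is one small but genuine slip. You assert that ``the hypotheses match'' for Theorem~\ref{comp:matPut2} applied with the constraint list $g_i:=d_i$. But the Archimedean hypothesis of that theorem, read with this constraint list, is $(1-\|x\|_2^2)I_\ell\in\qmod{\mathrm{diag}(d_1,\dots,d_{\theta(m)})}^\ell$, whereas what you have is $(1-\|x\|_2^2)I_\ell\in\qmod{G}^\ell$. The inclusion $\qmod{\mathrm{diag}(d_i)}^\ell\subseteq\qmod{G}^\ell$ that you correctly note goes the wrong way for this purpose. The paper handles this by simply adjoining $1-\|x\|_2^2$ as an extra scalar constraint, i.e.\ taking $D=\mathrm{diag}(d_1,\dots,d_{\theta(m)},1-\|x\|_2^2)$; then the Archimedean hypothesis for $D$ is trivially satisfied, and the extra $(1-\|x\|_2^2)S_{\theta(m)+1}$ term in the representation is pushed into $\qmod{G}^\ell$ at the end using $1-\|x\|_2^2\in\qmod{G}_{k_0}$. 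The extra constraint changes $\theta(m)$ to $\theta(m)+1$ in the intermediate bound, which is absorbed into $C$; it does not affect the {\L}ojasiewicz data since the added constraint is redundant for $K$ and only increases $\mathcal{G}$. With this one-line fix your argument is complete and matches the paper's.
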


	\rev{
		\begin{proof}
			By Theorem \ref{thm:scapoly}, we know that $d_1,\dots,d_{\theta(m)}\in \qmod{G}_{3^{m-1}d_{G}}$  and 
			\[
			K=\{x\in \mR^n\mid d_1(x)\geq0,\dots,d_{\theta(m)}(x)\geq 0\}.
			\]
			It follows from Theorem \ref{comp:matPut2} that $F \in Q M[D]_k^{\ell}$, where $D$ is the diagonal matrix with diagonal entries $d_1, \ldots, d_{\theta(m)}$, $1-\|x\|_2^2$, for
			\[
			k\geq C \cdot  {8}^{7\eta}\cdot3^{6(m-1)} \theta(m)^3n^2d_{G}^{6}\kappa^{7}d^{14\eta}\left(\frac{\|F\|_B}{F_{\min}}\right)^{7\eta+3}.
			\]
			Equivalently, 	there exist   SOS polynomial matrices  $S_{i}\in \Sigma[x]^{\ell}$ ($i=0,\dots,\theta(m)+1$)  with 
			$	\deg(S_0)\leq k$, $ \deg(d_iS_i)\leq k~(i=1,\dots,\theta(m))$, $	\deg(S_{\theta(m)+1})\leq k-2$,
			such that
			\[
			F=S_0+d_1S_1+\cdots d_{\theta(m)}S_{\theta(m)}+(1-\|x\|_2^2)S_{\theta(m)+1}.
			\]
			Suppose that $S_i=\sum\limits_{j=1}^{s_i}v_{i,j}v_{i,j}^T$ for $v_{i,j}\in \mR[x]_{\lceil k/2\rceil}^{\ell\times 1}$ and $d_i=\sigma_{i}+\sum\limits_{t=1}^{r_i}q_{i,t}^TGq_{i,t}$  for some $\sigma_{i}\in \Sigma[x]_{3^{m-1}d_G}^{1}$, $q_{i,t}\in \mR[x]_{\lceil 3^{m-1}d_G/2-d_G/2\rceil}^{m\times 1}$ . Then, we know that
			\[
			d_iS_i=\sigma_{i}S_i+\sum\limits_{j=1}^{s_i}\sum\limits_{t=1}^{r_i} v_{i,j}q_{i,t}^TGq_{i,t}v_{i,j}^T,
			\]
			which implies that $d_iS_i\in \qmod{G}_{k+3^{m-1}d_G}^{\ell}$.  Since $ (1-\|x\|_2^2)\cdot I_{\ell}\in \qmod{G}^{\ell}$, we know that $1-\|x\|_2^2\in \qmod{G}_{k_0}$ for some $k_0\in \mathbb{N}$. Then, we know that $(1-\|x\|_2^2)S_{\theta(m)+1}\in \qmod{G}_{k+k_0}^{\ell}$. Let $C$ be sufficiently large. Then,  we can drop the small terms $3^{m-1}d_G$, $k_0$, and thus $F \in \qmod{G}_k^{\ell}$ for $k$ satisfying \reff{bound2}.
				Since $d_1,\dots,d_{\theta(m)}\in \qmod{G}_{3^{m-1}d_{G}}$ and 
			\[
			K=\left\{x\in\mR^n \mid \frac{d_1(x)}{\left\|d_1\right\|_B}\geq 0,\dots,\dots,\frac{d_{\theta(m)}(x)}{\left\|d_{\theta(m)}\right\|_B}\geq 0\right\},
			\]
			it follows from Theorem \ref{klpham} that 	$\eta\leq (3^{m-1}d_G+1)(3^{m}d_G)^{n+\theta(m)-2}$.
			
			$\qed$
		\end{proof}
	}

	%%%%%%%%%%%%%%%%%%%%%%%%%%
	\iffalse
	
	\rev{
		By using the estimate on the {\L}ojasiewicz exponent $\eta$  as in \cite[Theorem 3.3]{hapham},   we know that there exists $\kappa>0$ such that 
		\be \label{exp2}
		d(x,K)^{\eta^*} \leq \kappa \mathcal{G}(x), \quad \forall x \in \bar{\Delta},
		\ee
		where
		\[
		\eta^*=(3^{m-1}d_G+1)(3^{m}d_G)^{n+\theta(m)-2}.
		\]
		The following corollary is direct.
		
		\begin{cor}
			Under the same assumptions of Theorem \ref{comp:matPut2}. If $F(x)\succeq F_{\min} \cdot  I_{\ell} \succ 0$ on $K$,  then $F \in \qmod{G}_k^{\ell}$ for
			\be  \nonumber
			k\geq C \cdot  {8}^{7\eta^*}\cdot3^{6(m-1)} \theta(m)^3n^2d_{G}^{6}\kappa^{7}d^{14\eta^*}\left(\frac{\|F\|_B}{F_{\min}}\right)^{7\eta^*+3}.
			\ee
			In the above,  $\kappa$ is the {\L}ojasiewicz constant as in \reff{exp2} and $C>0$ is a constant independent of $F$, $G$.
		\end{cor}
		
	}
	
\fi 

%%%%%%%%%%%%%%%%%%%%%%%%%%%%%5	

	~\\
	\noindent
	{\bf Remark.}
	When $G(x)$ is a diagonal polynomial matrix  with diagonal entries  $g_1,\dots,g_m \in \mR[x]$, i.e.,
	\be \nonumber
	K=\{x\in \mR^n\mid   g_1(x)\geq 0,\dots,  g_m(x)\geq 0\},
	\ee
	Theorem \ref{comp:matPut3} implies that for $k$ greater than the quantity as in \reff{bound2}, we have $F \in \qmod{G}_k^{\ell}$. However, the bound given in \reff{bound1} from Theorem \ref{comp:matPut2} is generally smaller than that  in \reff{bound2}. Additionally,  the proof of 	Theorem \ref{comp:matPut3}  builds upon Theorem \ref{comp:matPut2}.

	\subsection{Proof of Theorem \ref{int:comp:unb}} \label{aaaaaa}
	
	For a polynomial matrix $F=\sum_{\alpha \in \mathbb{N}^n_{d}}F_{\alpha}x^{\alpha}$ with $\deg(F)=d$, $F^{hom}$  denotes its  highest-degree homogeneous terms, i.e.,
	$
	F^{hom}(x)=\sum_{\alpha \in \mathbb{N}^n,|\alpha|= d}F_{\alpha}x^{\alpha}.
	$ 
	The homogenization of $F$ is denoted by $\widetilde{F}$, i.e.,
	$
	\widetilde{F}(\tilde{x})=\sum_{\alpha \in \mathbb{N}^n_{d}}F_{\alpha}x^{\alpha}x_0^{d-|\alpha|}
	$ 
	for $\tilde{x}:=(x_0,x)$.
	Let
	\be \label{tilde:K}
	\baray{rcl}
	\widetilde{K}  := & \left\{  \tilde{x}\in \mathbb{R}^{n+1}
	\left| \baray{l}
	(x_0)^{d_0}\widetilde{G}(\tilde{x})\succeq 0, \\
	x_0^2 + x^Tx = 1
	\earay \right. \right\}, \\
	\earay
	\ee
	where $d_0=2\lceil d_G/2\rceil-d_G$. Consider the optimization   
	\be  \label{hom:nsdp}
	\left\{ \baray{rl}
	\min & \lmd_{\min}(\widetilde{F}(\tilde{x}))  \\
	\st &  	(x_0)^{d_0}\widetilde{G}(\tilde{x})\succeq 0, \\
	&x_0^2 + x^Tx = 1,\\
	\earay \right.
	\ee
	where $\lmd_{\min}(\widetilde{F}(\tilde{x}))$ is the smallest eigenvalue of  $\widetilde{F}(\tilde{x})$. Let $\widetilde{F}_{\min}$ denote the optimal value of \reff{hom:nsdp}.

	\begin{lemma} \label{lem5.2}
		Let $K$ be as in \reff{sec5:K}.	Suppose that $F(x)$ is an  $\ell$-by-$\ell$ symmetric polynomial matrix with $\deg(F)=d$.
		If $F \succ 0$ on $K$  and $F^{hom}\succ 0$ on $\mR^n$, then   $\widetilde{F}_{\min}> 0$ and $\widetilde{F}\succeq \widetilde{F}_{\min}\cdot I_{\ell}$ on $\widetilde{K}$.
	\end{lemma}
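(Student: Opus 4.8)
The plan is to exploit the homogenization correspondence between $K$ and $\widetilde{K}$, together with compactness of the sphere, to reduce the claim to two separate case analyses on the sign of $x_0$. First I would observe that $\widetilde{K}$ is a closed subset of the unit sphere $\{x_0^2+x^Tx=1\}$, hence compact; since $\lambda_{\min}(\widetilde{F}(\tilde{x}))$ is a continuous function of $\tilde{x}$, the optimization \reff{hom:nsdp} attains its optimal value $\widetilde{F}_{\min}$ at some point $\tilde{x}^*=(x_0^*,x^*)\in\widetilde{K}$. It therefore suffices to show $\lambda_{\min}(\widetilde{F}(\tilde{x}))>0$ at every $\tilde{x}\in\widetilde{K}$, which (after taking the minimum over the compact set) yields $\widetilde{F}_{\min}>0$ and, since $\widetilde{F}(\tilde{x})\succeq\lambda_{\min}(\widetilde{F}(\tilde{x}))\cdot I_\ell\succeq\widetilde{F}_{\min}\cdot I_\ell$, the asserted lower bound $\widetilde{F}\succeq\widetilde{F}_{\min}\cdot I_\ell$ on $\widetilde{K}$.

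Next I would split according to whether $x_0=0$. \textbf{Case $x_0\neq 0$.} Here I want to show $x/x_0\in K$, so that positivity of $F$ on $K$ applies. By homogeneity, $\widetilde{G}(\tilde{x})=x_0^{d_G}G(x/x_0)$, and the constraint $(x_0)^{d_0}\widetilde{G}(\tilde{x})\succeq 0$ becomes $x_0^{d_0+d_G}G(x/x_0)=x_0^{2\lceil d_G/2\rceil}G(x/x_0)\succeq 0$; since $x_0^{2\lceil d_G/2\rceil}>0$, this forces $G(x/x_0)\succeq 0$, i.e.\ $x/x_0\in K$. Then $\widetilde{F}(\tilde{x})=x_0^{\deg(F)}F(x/x_0)=x_0^d F(x/x_0)$, and because $F\succ 0$ on $K$ we get $\widetilde{F}(\tilde{x})\succ 0$ (note $x_0^d>0$ if $d$ is even; if $d$ is odd one must be slightly careful — but $\widetilde{F}(x_0,x)$ for $x_0<0$ equals $(-1)^d|x_0|^d F(x/x_0)$, and replacing $\tilde x$ by $-\tilde x$ leaves $\widetilde{K}$ and $\widetilde F$'s relevant values governed the same way since $-\tilde x$ also lies on the sphere; alternatively, one checks directly that for $\tilde x\in\widetilde K$ with $x_0<0$, writing $y=x/x_0\in K$, $\widetilde F(\tilde x)=x_0^d F(y)$ and the sign issue is handled by noting $(-x_0,-x)\in\widetilde K$ as well). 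In all cases $\lambda_{\min}(\widetilde{F}(\tilde{x}))>0$. \textbf{Case $x_0=0$.} Then $x^Tx=1$, so $x\neq 0$, and $\widetilde{F}(0,x)=F^{hom}(x)$ by definition of homogenization; since $F^{hom}\succ 0$ on $\mathbb{R}^n$ and $x\neq 0$, we get $\widetilde{F}(0,x)=F^{hom}(x)\succ 0$, hence $\lambda_{\min}(\widetilde{F}(\tilde{x}))>0$.

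Having verified $\lambda_{\min}(\widetilde{F}(\tilde{x}))>0$ pointwise on the compact set $\widetilde{K}$, the minimum $\widetilde{F}_{\min}$ is attained and strictly positive, completing the proof. The only genuinely delicate point is the parity bookkeeping in the case $x_0\neq 0$ with $x_0<0$ and $\deg(F)$ odd; I expect this to be the main (though minor) obstacle, and the cleanest way around it is to note that $\widetilde{K}$ and the objective in \reff{hom:nsdp} are invariant (up to the natural sign symmetry $\widetilde F(-\tilde x)=(-1)^d\widetilde F(\tilde x)$ combined with $\lambda_{\min}$ being evaluated on a genuine SDP feasibility condition that is itself sign-symmetric through the factor $(x_0)^{d_0}$ having the same parity as needed) so that one may assume $x_0>0$ without loss of generality, or simply to carry the factor $x_0^d$ through and observe it never vanishes on the relevant region. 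Everything else is a direct substitution using homogeneity of $\widetilde G$ and $\widetilde F$ and the continuity/compactness argument.
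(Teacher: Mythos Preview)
Your overall strategy---compactness of $\widetilde{K}$, continuity of $\lambda_{\min}(\widetilde{F}(\cdot))$, and a case split on $x_0=0$ versus $x_0\neq 0$---is exactly what the paper does. The only point where you diverge is the parity issue, and there your handling is not correct.

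Your proposed symmetry workaround does not rescue the case $x_0<0$ with $d$ odd. You observe that $\widetilde{K}$ is invariant under $\tilde{x}\mapsto -\tilde{x}$ (true, since $d_0+d_G=2\lceil d_G/2\rceil$ is even), but then $\widetilde{F}(-\tilde{x})=(-1)^d\widetilde{F}(\tilde{x})$, so for odd $d$ the objective $\lambda_{\min}$ is \emph{not} invariant: if $\widetilde{F}(\tilde{x})\succ 0$ at some $\tilde{x}\in\widetilde{K}$ with $x_0>0$, then $\widetilde{F}(-\tilde{x})\prec 0$ at $-\tilde{x}\in\widetilde{K}$, and the lemma would actually fail. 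Hence ``assume $x_0>0$ without loss of generality'' is not legitimate here.

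The paper's resolution is a one-line observation you missed: the hypothesis $F^{hom}\succ 0$ on $\mathbb{R}^n$ already forces $d$ to be even. Indeed, $F^{hom}$ is homogeneous of degree $d$, so $F^{hom}(-x)=(-1)^d F^{hom}(x)$; if $d$ were odd, then $F^{hom}(x)\succ 0$ would give $F^{hom}(-x)\prec 0$, contradicting the hypothesis. Once $d$ is even, $x_0^d>0$ whenever $x_0\neq 0$, and $\widetilde{F}(\tilde{x})=x_0^d F(x/x_0)\succ 0$ follows immediately from $F\succ 0$ on $K$. With this observation inserted, your proof is complete and matches the paper's.
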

	\begin{proof}
		Suppose $\tilde{x}=(x_0,x)\in \widetilde{K}$.	If $x_0=0$, then we have 
		$
		\widetilde{F}(\tilde{x})=F^{hom}(x)\succ 0.
		$ 
		If $x_0\neq 0$, then
		\[
		G(x/x_0)=(x_0)^{-d_G-d_0}(x_0)^{d_0}\tilde{G}(\tilde{x})\succeq 0,
		\] 
		which implies that  $x/x_0\in K$.	Since $F^{hom}\succ 0$ on $\mR^n$, the degree of $F$ must be  even, and we have
		\[
		\widetilde{F}(\tilde{x})=x_0^dF(x/x_0)\succ 0.
		\] 
		Since  $\widetilde{K}$ is compact, the optimal value of \reff{hom:nsdp} is positive, i.e., $\widetilde{F}_{\min}>0$. 
		
		$\qed$
	\end{proof}

	For the polynomial matrix $(x_0)^{d_0}\widetilde{G}(\tilde{x})$, let $d_1,\dots,d_{\theta(m)}$ be the   polynomials   as in Theorem \ref{thm:scapoly}.
	Denote
	{\small
		\be
		\mathcal{G}(\tilde{x})=-\min \left(\frac{d_1(\tilde{x})}{\left\|d_1\right\|_B}, \ldots, \frac{d_{\theta(m)}(\tilde{x})}{\left\|d_{\theta(m)}\right\|_B}, \frac{1-\|\tilde{x}\|_2^2}{\|1-\|\tilde{x}\|_2^2\|_B},\frac{\|\tilde{x}\|_2^2-1}{\|1-\|\tilde{x}\|_2^2\|_B},0\right). 
		\ee
	}
	Note that $\mathcal{G}(\tilde{x})=0$ implies that $\tilde{x}\in \widetilde{K}$. 
	By Corollary \ref{loj}, there exist positive constants $\eta$, $\kappa$ such that  for all $\tilde{x}$ satisfying $\sqrt{n+1}-x_0-x_1-\cdots-x_n \geq 0,1+x_0\geq 0, 1+x_1 \geq 0, \ldots, 1+x_n \geq 0$, it holds that
	\be  \label{def:Loj:G5}
	d(\tilde{x},\widetilde{K})^{\eta} \leq \kappa \mathcal{G}(\tilde{x}).
	\ee

	In the following, we give the proof of Theorem \ref{int:comp:unb}, which provides a polynomial bound on the complexity of  matrix  Putinar--Vasilescu's Positivstellens{\"a}tz.

	\begin{theorem}
		\label{comp:unb}
		Let $K$ be as in \reff{sec5:K} with $m\geq 2$.	Suppose that $F(x)$ is an  $\ell$-by-$\ell$ symmetric polynomial matrix with $\deg(F)=d$.
		If $F \succ 0$ on $K$  and $F^{hom}\succ  0$ on $\mR^n$,  then $(1+\|x\|_2^2)^k F \in \qmod{G}_{2k+d}^{\ell}$ for
		\be \label{bou:3}
		k\geq C \cdot  {8}^{7\eta}\cdot 3^{6(m-1)} (\theta(m)+2)^3(n+1)^2(\lceil d_G/2\rceil)^{6}\kappa^{7}d^{14\eta}\left(\frac{\|\widetilde{F}\|_B}{\widetilde{F}_{\min}}\right)^{7\eta+3}.
		\ee
		In the above,  $\kappa$, $\eta$ are respectively the {\L}ojasiewicz constant and exponent  in \reff{def:Loj:G5}, $C>0$ is a constant independent of $F$, $G$ and $\widetilde{F}_{\min}$ is the optimal value of \reff{hom:nsdp}.  \rev{Furthermore, the {\L}ojasiewicz exponent can be estimated as 
			\[
			\eta\leq (2\cdot3^{m-1}\lceil d_G/2\rceil+1)(2\cdot 3^{m}\lceil d_G/2\rceil)^{n+\theta(m)-1}.
			\]
		}
	\end{theorem}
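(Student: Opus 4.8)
The plan is to derive the unbounded statement from the compact one (Theorem~\ref{comp:matPut2}) by homogenization, and then to dehomogenize. \textbf{Step 1 (homogenize).} Replace $F$ by its homogenization $\widetilde F$, necessarily of even degree $d$ because $F^{hom}\succ 0$ on $\mR^n$, and work on the compact set $\widetilde K$ of \reff{tilde:K}, which lies on the unit sphere $\{x_0^2+x^Tx=1\}\subseteq\{1-\|\tilde x\|_2^2\ge0\}$. Lemma~\ref{lem5.2} converts the hypotheses $F\succ 0$ on $K$ and $F^{hom}\succ 0$ on $\mR^n$ into $\widetilde F_{\min}>0$ and $\widetilde F\succeq\widetilde F_{\min}\cdot I_\ell$ on $\widetilde K$. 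The weight $x_0^{d_0}$ with $d_0=2\lceil d_G/2\rceil-d_G\in\{0,1\}$ is inserted so that $x_0^{d_0}\widetilde G$ is a symmetric polynomial matrix of \emph{even} degree $2\lceil d_G/2\rceil$ whose restriction to the chart $x_0=1$ equals $G$.

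\textbf{Step 2 (scalarize and apply the compact bound).} Apply Theorem~\ref{thm:scapoly} to the $m\times m$ matrix $x_0^{d_0}\widetilde G(\tilde x)$: this produces $\theta(m)$ scalar polynomials $d_1,\dots,d_{\theta(m)}\in\qmod{x_0^{d_0}\widetilde G}_{3^{m-1}\cdot 2\lceil d_G/2\rceil}$ with $\{x_0^{d_0}\widetilde G\succeq0\}=\{d_1\ge0,\dots,d_{\theta(m)}\ge0\}$. Adjoining the two scalar inequalities $1-\|\tilde x\|_2^2\ge0$ and $\|\tilde x\|_2^2-1\ge0$ presents $\widetilde K$ as the solution set, in the $n+1$ variables $\tilde x$, of $\theta(m)+2$ scalar inequalities of degree $\le 3^{m-1}\cdot 2\lceil d_G/2\rceil$, and $(1-\|\tilde x\|_2^2)\cdot I_\ell$ lies trivially in the associated diagonal quadratic module. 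Applying Theorem~\ref{comp:matPut2} with its parameters $m,n,d_G,F,F_{\min}$ replaced by $\theta(m)+2,\ n+1,\ 3^{m-1}\cdot 2\lceil d_G/2\rceil,\ \widetilde F,\ \widetilde F_{\min}$, and pulling the numerical factor $2^6$ in $(3^{m-1}\cdot 2\lceil d_G/2\rceil)^6$ into $C$, gives exactly the bound \reff{bou:3}: for $k$ above that threshold there is an identity
\[
\widetilde F=S_0+\sum_{i=1}^{\theta(m)}d_iS_i+(1-\|\tilde x\|_2^2)S_++(\|\tilde x\|_2^2-1)S_-
\]
with $S_0,S_i,S_\pm$ SOS matrices of degree $O(k)$. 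Writing each $S_i$ as a sum of rank-one SOS and $d_i=\sigma_i+\sum_s r_{i,s}^T(x_0^{d_0}\widetilde G)r_{i,s}$ and distributing, every $d_iS_i$ rewrites as an element of $\qmod{x_0^{d_0}\widetilde G}^\ell$; merging the last two summands into a single ideal term $(\|\tilde x\|_2^2-1)\Psi$ yields
\[
\widetilde F=\Sigma_0+\sum_t P_t^T\bigl(x_0^{d_0}\widetilde G\bigr)P_t+(\|\tilde x\|_2^2-1)\Psi
\]
with $\Sigma_0\in\Sigma[\tilde x]^\ell$, $P_t\in\mR[\tilde x]^{m\times\ell}$, $\Psi$ a symmetric polynomial matrix, all of degree $O(k)$ (enlarging $C$).

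\textbf{Step 3 (dehomogenize).} Homogenize this identity to the common even degree $2k+d$ by multiplying $\widetilde F$ by $(x_0^2+x^Tx)^k$ and raising the $\Sigma_0$-- and $x_0^{d_0}\widetilde G$--terms by appropriate powers of $x_0^2+x^Tx$; evenness of every degree keeps everything polynomial, and the leftover is reabsorbed into the $(\|\tilde x\|_2^2-1)$--term, which can then be arranged to vanish identically. Restricting to $x_0=1$ sends $(x_0^2+x^Tx)^k\widetilde F$ to $(1+\|x\|_2^2)^kF$, sends $x_0^{d_0}\widetilde G$ to $\widetilde G(1,x)=G$, preserves sums of squares, and leaves no ideal term behind---this is precisely the dehomogenization of Mai--Magron \cite{maim} in the scalar case and of \cite{dihole} in the matrix case. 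The result is $(1+\|x\|_2^2)^kF=S+\sum_t\bar P_t^TG\bar P_t$ with $\deg S,\ \deg(\bar P_t^TG\bar P_t)\le 2k+d$, i.e.\ $(1+\|x\|_2^2)^kF\in\qmod{G}_{2k+d}^\ell$. Finally, feeding the scalar description of $\widetilde K$ from Step~2 into Theorem~\ref{klpham} ($n+1$ variables, one sphere equation, $\theta(m)$ inequalities of degree $\le 2\cdot 3^{m-1}\lceil d_G/2\rceil$) yields the stated estimate for $\eta$.

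\textbf{Main obstacle.} Everything but Step~3 is a faithful transcription of the compact case. The delicate point is the dehomogenization: one must choose the common-degree homogenization so that (i) the $x_0^{d_0}\widetilde G$--weighted squares descend to genuine $G$--weighted squares at $x_0=1$, (ii) the $(\|\tilde x\|_2^2-1)$--multiple is destroyed rather than producing a spurious $x^Tx\cdot\Psi(1,x)$ obstruction outside $\qmod{G}^\ell$, and (iii) the degrees close at $2k+d$ with the power of $1+\|x\|_2^2$ matching $k$ exactly. The parity role of $x_0^{d_0}$ and the evenness of $\deg F$ are exactly what make this homogenization/dehomogenization round trip legitimate.
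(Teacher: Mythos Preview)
Your Steps 1 and 2 are exactly what the paper does: homogenize, invoke Lemma~\ref{lem5.2}, scalarize via Theorem~\ref{thm:scapoly}, adjoin the two sphere inequalities, and apply the compact bound (Theorem~\ref{comp:matPut2}) with the substituted parameters to obtain the stated threshold~\reff{bou:3}. The {\L}ojasiewicz estimate via Theorem~\ref{klpham} is also the paper's argument.

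Step 3 is where you diverge, and your sketch has a real gap. You propose to ``homogenize the identity to degree $2k+d$'' by multiplying each term by a power of $q:=x_0^2+x^Tx$, absorb the discrepancy into the ideal term, and then observe that a homogeneous multiple of $q-1$ must vanish. The last observation is correct, but the homogenization step is not: a term like $\Sigma_0=V^TV$ is \emph{not} homogeneous, and you cannot make it homogeneous of degree $2k+d$ just by multiplying by a power of $q$. If you split $V=V_{\mathrm{even}}+V_{\mathrm{odd}}$ by parity and homogenize $V_{\mathrm{even}}^TV_{\mathrm{even}}$ and $V_{\mathrm{odd}}^TV_{\mathrm{odd}}$ separately (which does stay SOS), the cross terms $V_{\mathrm{even}}^TV_{\mathrm{odd}}+V_{\mathrm{odd}}^TV_{\mathrm{even}}$ have odd total degree and cannot be pushed to even degree $2k+d$ by powers of $q$; nor do they lie in $(q-1)$. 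Your phrase ``evenness of every degree keeps everything polynomial'' glosses over exactly this point, and ``restricting to $x_0=1$'' does not help since $q-1$ becomes $\|x\|_2^2\neq 0$ there.

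The paper resolves Step 3 differently and more transparently. Instead of trying to homogenize, it substitutes $\tilde{x}=(1,x)/\sqrt{1+\|x\|_2^2}$, which lies on the sphere, so the $(\|\tilde{x}\|_2^2-1)\Psi$ term vanishes \emph{immediately}. Writing $S=V^TV$ and splitting $V(\tilde{x})$ under this substitution into a part rational in $x$ and a part that is $\sqrt{1+\|x\|_2^2}$ times rational (and likewise for each $P_i$), one clears denominators to obtain
\[
(1+\|x\|_2^2)^kF \;=\; Q \;+\; \sqrt{1+\|x\|_2^2}\,H
\]
with $Q\in\qmod{G}^{\ell}_{2k+d}$ (built from the ``even'' pieces, which stay SOS and $G$-weighted) and $H$ a symmetric polynomial matrix (built from the cross terms). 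Since $\sqrt{1+\|x\|_2^2}$ is not a rational function, the irrational part must vanish, forcing $H=0$. This irrationality argument is precisely what disposes of the cross terms you left unaddressed; it is the substantive content of the dehomogenization, not a formality to be delegated to \cite{maim} or \cite{dihole}.
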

	
	\begin{proof}
		By Lemma \ref{lem5.2}, we have that	 $\widetilde{F}\succeq \widetilde{F}_{\min}\cdot I_{\ell}\succ 0$ on $\widetilde{K}$. 
		Note that  $\qmod{1-\|\tilde{x}\|^2}^{\ell}$ is Archimedean, and  $\rev{\widetilde{K}}$ can be equivalently  described by
		\[
		d_1(\tilde{x})\geq 0,\dots,d_{\theta(m)}(\tilde{x})\geq 0,1-\|\tilde{x}\|_2^2\geq 0, \|\tilde{x}\|_2^2-1\geq 0.
		\]  
		By following the proof of    Theorem \ref{intro:comp:matPut3} for  $\widetilde{F}(x)$, we know that  for $k$ satisfying  \reff{bou:3}, 
		there exist  $S\in \Sigma[\tilde{x}]^{\ell}$, $H \in \mathbb{R}[\tilde{x}]^{\ell \times \ell}$, $P_i \in \mathbb{R}[\tilde{x}]^{m\times \ell}$ ($i=1,\dots,t$)  with 
		\[
		\deg(S)\leq k,\, \deg(H)\leq k-2,\,  \deg(P_i^T(x_0)^{d_0}\widetilde{G}P_i)\leq k~(i=1,\dots,t),
		\]	 
		such that
		\[
		\widetilde{F}(\tilde{x})=S(\tilde{x})+(\|\tilde{x}\|_2^2-1)\cdot H(\tilde{x})+\sum\limits_{i=1}^{t} P_i(\tilde{x})^T(x_0)^{d_0}\widetilde{G}(\tilde{x})P_i(\tilde{x}).
		\]
		Replacing  $\tilde{x}$ by $(1,x)/\sqrt{1+\|x\|_2^2}$ in the above identity, we get that
		\be \label{pv1}
		\baray{ll}
		\frac{F(x)}{\sqrt{1+\|x\|_2^2}^d}=&\sum\limits_{i=1}^{t} P_i\left(\frac{(1,x)}{\sqrt{1+\|x\|_2^2}}\right)^T\frac{G(x)}{\sqrt{1+\|x\|_2^2}^{d_0+d_G}}P_i\left(\frac{(1,x)}{\sqrt{1+\|x\|_2^2}}\right)\\
		&+S\left(\frac{(1,x)}{\sqrt{1+\|x\|_2^2}}\right).
		\earay 
		\ee
		Suppose that $S=V^TV$ for some polynomial matrix $V$. Then,  there exist polynomial matrices $V^{(1)}, V^{(2)}$ with $2\deg(V^{(1)})\leq \deg(S)$, $2\deg(V^{(2)})+2\leq \deg(S)$ such that
		\[
		V\left(\frac{(1,x)}{\sqrt{1+\|x\|_2^2}}\right)=\left(\frac{1}{\sqrt{1+\|x\|_2^2}}\right)^{\deg(V)}(V^{(1)}+\sqrt{1+\|x\|_2^2}V^{(2)}).
		\]
		It holds that
		\[
		\baray{ll}
		S\left(\frac{(1,x)}{\sqrt{1+\|x\|_2^2}}\right)=\frac{(V^{(1)})^TV^{(1)}+(1+\|x\|_2^2)(V^{(2)})^TV^{(2)}}{\sqrt{1+\|x\|_2^2}^{\deg(S)}}
		+\frac{(V^{(2)})^TV^{(1)}+(V^{(1)})^TV^{(2)}}{\sqrt{1+\|x\|_2^2}^{\deg(S)-1}}.
		\earay 
		\]
		Similarly, there exist  polynomial matrices $P_i^{(1)}, P_i^{(2)}$ with $\deg(P_i^{(1)})\leq \deg(P_i)$, $\deg(P_i^{(2)})+1\leq \deg(P_i)$ satisfying
		\[
		P_i\left(\frac{(1,x)}{\sqrt{1+\|x\|_2^2}}\right)=\left(\frac{1}{\sqrt{1+\|x\|_2^2}}\right)^{\deg(P_i)}(P_i^{(1)}+\sqrt{1+\|x\|_2^2}P_i^{(2)}),
		\]
		Then, we have that 
		\[
		\baray{l}
		\sum\limits_{i=1}^{t} P_i\left(\frac{(1,x)}{\sqrt{1+\|x\|_2^2}}\right)^T\frac{G(x)}{\sqrt{1+\|x\|_2^2}^{d_0+d_G}}P_i\left(\frac{(1,x)}{\sqrt{1+\|x\|_2^2}}\right)  \\
		\quad\quad\quad\quad\quad\quad	=\sum_{i=1}^{t} \frac{(P_i^{(1)})^TG(x)P_i^{(1)}+(1+\|x\|_2^2)(P_i^{(2)})^TG(x)P_i^{(2)}}{\sqrt{1+\|x\|_2^2}^{d_0+d_G+2\deg(P_i)}} + \frac{(P_i^{(2)})^TG(x)P_i^{(1)}+(P_i^{(1)})^TG(x)P_i^{(2)}}{\sqrt{1+\|x\|_2^2}^{d_0+d_G+2\deg(P_i)-1}}.\\
		\earay
		\]
	Combining these results with  equation \reff{pv1},  we know that for $k$ satisfying \reff{bou:3}, the following holds:
		\[
		(1+\|x\|_2^2)^kF=Q+\sqrt{1+\|x\|_2^2}H(x),
		\]
		for some $Q\in \qmod{G}^{\ell}$ with $\deg(Q)\leq 2k+d$ and a symmetric polynomial matrix $H$. Since $\sqrt{1+\|x\|_2^2}$ is not a rational function, we conclude that $(1+\|x\|_2^2)^kF=Q$, which implies $(1+\|x\|_2^2)^k F\in \qmod{G}^{\ell}_{2k+d}$.
		
	\rev{		Since $d_1,\dots,d_{\theta(m)}\in \qmod{G}_{2\cdot3^{m-1}\lceil d_G/2\rceil}$ and 
		\[
		\widetilde{K}=\left\{\tilde{x}\in\mR^{n+1} \mid \frac{d_1(\tilde{x})}{\left\|d_1\right\|_B}\geq 0,\dots,\frac{d_{\theta(m)}(\tilde{x})}{\left\|d_{\theta(m)}\right\|_B}\geq 0,1-\|\tilde{x}\|_2^2=0\right\},
		\]
		it follows from Theorem \ref{klpham} that 	$\eta\leq (2\cdot3^{m-1}\lceil d_G/2\rceil+1)(2\cdot3^{m}\lceil d_G/2\rceil)^{n+\theta(m)-1}$.
	}
		
		$\qed$
	\end{proof}

%%%%%%%%%%%%%%%%%%%%%%%%%%%%
\iffalse	
	
	\rev{
		By using the estimate on the {\L}ojasiewicz exponent $\eta$  as in \cite[Theorem 3.3]{hapham},   we know that there exists $\kappa>0$ such that  for all $\tilde{x}$ satisfying $\sqrt{n+1}-x_0-x_1-\cdots-x_n \geq 0,1+x_0\geq 0, 1+x_1 \geq 0, \ldots, 1+x_n \geq 0$, it holds that
		\be \label{exp3}
		d(\tilde{x},\widetilde{K})^{\bar{\eta}} \leq \kappa \mathcal{G}(\tilde{x}), \quad \forall x \in \bar{\Delta},
		\ee
		where
		\[
		\bar{\eta}=(2\cdot3^{m-1}\lceil d_G/2\rceil+1)(2\cdot 3^{m}\lceil d_G/2\rceil)^{n+\theta(m)-1}
		\]
		The following corollary is direct.
		
		\begin{cor}
			Under the same assumptions of Theorem \ref{comp:matPut2}. If $F(x)\succeq F_{\min} \cdot  I_{\ell} \succ 0$ on $K$,  then $F \in \qmod{G}_k^{\ell}$ for
			\be  \nonumber
			k\geq C \cdot  {8}^{7\bar{\eta}}\cdot 3^{6(m-1)} (\theta(m)+2)^3(n+1)^2(\lceil d_G/2\rceil)^{6}\kappa^{7}d^{14\bar{\eta}}\left(\frac{\|\widetilde{F}\|_B}{\widetilde{F}_{\min}}\right)^{7\bar{\eta}+3}.
			\ee
			In the above,  $\kappa$ is the {\L}ojasiewicz constant as in \reff{exp3} and $C>0$ is a constant independent of $F$, $G$.
		\end{cor}
		
	}
	
\fi 
%%%%%%%%%%%%%%%%%%%%%%%%%%%	

\noindent	
{\bf Proof of Corollary \ref{comp:unb2}.}
		Note that  $F\succeq 0$ on $K$, and we have
		\[
		d+2\geq \deg(F+\varepsilon \cdot (1+\|x\|_2^2)^{\lceil \frac{d+1}{2}\rceil}I_{\ell})=2\lceil \frac{d+1}{2}\rceil>d.
		\]
		Hence, for $\tilde{x}\in \widetilde{K}$, the homogenization of $F+\varepsilon \cdot (1+\|x\|_2^2)^{ \lceil \frac{d+1}{2}\rceil}I_{\ell}$ satisfies
		\[
		\baray{ll}
		x_0^{2\lceil \frac{d+1}{2}\rceil-d}\widetilde{F}+\varepsilon \cdot (x_0^2+\|x\|_2^2)^{\lceil \frac{d+1}{2}\rceil}I_{\ell} \succeq \varepsilon \cdot (x_0^2+\|x\|_2^2)^{\lceil \frac{d+1}{2}\rceil}I_{\ell}
		\succeq \varepsilon \cdot I_{\ell}.\\
		\earay
		\] 	
		The conclusion follows  from Theorem \ref{comp:unb}. 
		
		$\qed$

	\subsection{Convergence rate of matrix SOS relaxations}
	\label{sec:momrel}

	Consider the polynomial matrix 
	optimization 
	\be  \label{nsdp}
	\left\{ \baray{rl}
	\min & f(x)  \\
	\st &  	G(x)\succeq 0, \\
	\earay \right.
	\ee
	where $f(x)\in \mR[x]$, $G(x)$ is an $m\times m$ symmetric  polynomial matrix. We denote the optimal value of \reff{nsdp} as $f_{\min}$.

	A standard approach for solving \reff{nsdp} globally is the sum-of-squares hierarchy introduced in \cite{hdlb,kojm,schhol}. This hierarchy consists of  a sequence of semidefinite programming  relaxations.
	Recall that for a degree $k$, the $k$th degree truncation of $\qmod{G}$ is
	\be \nonumber 
	\qmod{G}_{k}:=\left\{\baray{l|l}
	\sigma+\sum_{i=1}^t v_i^TGv_i& \baray{l}  \sigma \in \Sigma[x],~v_i \in \mathbb{R}[x]^{m},~t\in \mathbb{N},\\
	\deg(\sigma)\leq k,~\deg(v_i^TGv_i)\leq k
	\earay
	\earay \right\}.
	\ee
	Checking whether a polynomial belongs to the  the quadratic module $\qmod{G}_{k}$  can
	be done by solving a  semidefinite program (see \cite{schhol}). 
	For an order $k$, the $k$th order SOS   relaxation of \reff{nsdp} is
	\be \label{sos}
	\left\{\begin{array}{cl}
		\max & \gamma \\
		\text { s.t. } & f-\gamma \in \mathrm{QM}[G]_{ 2k}.
	\end{array}\right.
	\ee
	Let $ f_k$ denote the optimal value of \reff{sos}. It  holds  the monotonicity relation:
	$
	f_k \leq f_{k+1} \leq \cdots \leq f_{\min} .
	$
	The hierarchy \reff{sos} is said to have {\it asymptotic convergence} if $f_k\rightarrow \infty$ as $k\rightarrow \infty$, and it is said to have {\it finite convergence} if $f_k=f_{\min}$ for all $k$ big enough.  
	\rev{When the quadratic module $\qmod{G}$ is Archimedean}, it was  shown in \cite{hdlb,kojm,schhol} that the hierarchy \reff{sos}   converges asymptotically.  Recently, Huang and Nie \cite{hny24} proved that this hierarchy also has finite convergence if, in addition, the nondegeneracy condition, strict complementarity condition and second order sufficient condition hold at every global minimizer of \reff{nsdp}.  In the following, we provide a polynomial bound on  the convergence rate of the hierarchy \reff{sos}.

	\begin{theorem} \label{matrate}
		Suppose  $f \in \mathbb{R}[x]$ with $d=\deg(f)$, $m\geq 2$, and $ 1-\|x\|_2^2\in \qmod{G}$.		Then we have that 
		\be \label{rate} 
		|f_k-f_{\min}|\leq 3 \cdot (C \cdot  {8}^{7\eta}\cdot 3^{6(m-1)} \theta(m)^3n^2d_{G}^{6}\kappa^{7}d^{14\eta})^{\frac{1}{7\eta+3}}\|f\|_B\left(\frac{1}{k}\right)^{\frac{1}{7\eta+3}},
		\ee
		where  $\kappa$, $\eta$ are respectively the {\L}ojasiewicz constant and exponent  in \reff{def:Loj:G4}, and $C>0$ is a constant independent of $f$, $G$. \rev{Furthermore, the {\L}ojasiewicz exponent can be estimated as 
			\[
			\eta\leq (3^{m-1}d_G+1)(3^{m}d_G)^{n+\theta(m)-2}.
			\]
		}
	\end{theorem}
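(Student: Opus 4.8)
The plan is to deduce Theorem \ref{matrate} from Theorem \ref{comp:matPut3} by the standard $\varepsilon$-perturbation argument, viewing the scalar objective $f$ as a $1\times 1$ symmetric polynomial matrix (so $\ell=1$). We may assume $d=\deg(f)\geq 1$, since for constant $f$ we have $f_k=f_{\min}$ and \reff{rate} is trivial. Fix a parameter $\varepsilon\in(0,\|f\|_B]$, to be calibrated at the end, and set $F:=f-f_{\min}+\varepsilon$. On $K$ we have $f\geq f_{\min}$, so $F\succeq \varepsilon\cdot I_1\succ 0$ on $K$ and $\deg F=d$; in Theorem \ref{comp:matPut3} the quantity ``$F_{\min}$'' only needs to be a valid lower bound for $F$ on $K$, so we may use $F_{\min}=\varepsilon$. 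Next I would control the Bernstein norm: since $K\subseteq\bar{\Delta}$, property \reff{proB} gives $|f_{\min}|\leq\max_{s\in\bar{\Delta}}|f(s)|\leq\|f\|_{B,d}=\|f\|_B$, whence $\|F\|_B\leq\|f\|_B+|f_{\min}|+\varepsilon\leq 3\|f\|_B$ and therefore $\|F\|_B/F_{\min}\leq 3\|f\|_B/\varepsilon$. This is exactly where the prefactor $3\|f\|_B$ in \reff{rate} originates.

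Because the hypothesis $1-\|x\|_2^2\in\qmod{G}$ is the same as $(1-\|x\|_2^2)\cdot I_1\in\qmod{G}^1$, Theorem \ref{comp:matPut3} applies to $F$: there is a universal constant $C_0>0$ (independent of $f$ and $G$) such that $F\in\qmod{G}_{k'}$ as soon as
\[
k'\ \geq\ C_0\cdot 8^{7\eta}\,3^{6(m-1)}\,\theta(m)^3\,n^2\,d_G^6\,\kappa^{7}\,d^{14\eta}\left(\frac{3\|f\|_B}{\varepsilon}\right)^{7\eta+3}.
\]
I would then take $k'=2k$ (the order-$k$ relaxation \reff{sos} allows certificates in $\qmod{G}_{2k}$), solve the displayed inequality for $\varepsilon$, and set
\[
\varepsilon\ :=\ 3\|f\|_B\left(\frac{C_0\cdot 8^{7\eta}\,3^{6(m-1)}\,\theta(m)^3\,n^2\,d_G^6\,\kappa^{7}\,d^{14\eta}}{2k}\right)^{\frac{1}{7\eta+3}},
\]
which, after absorbing the factor $1/2$ into the constant, is precisely the right-hand side of \reff{rate}. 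With this choice $f-(f_{\min}-\varepsilon)=F\in\qmod{G}_{2k}$, so $\gamma=f_{\min}-\varepsilon$ is feasible for \reff{sos}; hence $f_k\geq f_{\min}-\varepsilon$, and combined with the trivial bound $f_k\leq f_{\min}$ this gives $0\leq f_{\min}-f_k\leq\varepsilon$, i.e. \reff{rate}. The estimate $\eta\leq(3^{m-1}d_G+1)(3^m d_G)^{n+\theta(m)-2}$ is inherited verbatim from Theorem \ref{comp:matPut3}, being the bound of Theorem \ref{klpham} applied to the scalar polynomials $d_1,\dots,d_{\theta(m)}$ attached to $G$ by Theorem \ref{thm:scapoly}.

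I do not expect a genuine obstacle: the statement is essentially a calibration of Theorem \ref{comp:matPut3}, and the only points requiring care are bookkeeping ones. First, one must keep $\varepsilon\leq\|f\|_B$ so that the crude estimate $\|F\|_B\leq 3\|f\|_B$ is legitimate. Second, one must match the relaxation order $2k$ with the degree bound of Theorem \ref{comp:matPut3}. Third, in the small-$k$ regime the value of $\varepsilon$ dictated above may exceed $\|f\|_B$ and \reff{sos} may not even be feasible; there one restricts to $k$ beyond the (finite, $G$- and $f$-dependent) feasibility threshold of \reff{sos}, or simply enlarges the universal constant $C$ so that \reff{rate} is non-informative there. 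None of this affects the shape of the bound.
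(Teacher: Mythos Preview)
Your proposal is correct and follows essentially the same approach as the paper: apply Theorem \ref{comp:matPut3} to $F=f-f_{\min}+\varepsilon$ with $F_{\min}=\varepsilon$ and $\|F\|_B\leq 3\|f\|_B$, then invert the degree bound to obtain \reff{rate}. If anything, you are slightly more careful than the paper in distinguishing the relaxation order $k$ from the truncation degree $2k$ and in handling the small-$k$/constant-$f$ edge cases.
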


	\begin{proof}
		Since $f_{\min}\leq \|f\|_B$ (see \cite{gefa}), we have that for  $0<\varepsilon\leq \|f\|_B$,
		\[
		\|f-f_{\min}+\varepsilon\|_B\leq \|f\|_B+|f_{\min}|+\varepsilon \leq 3\|f\|_B.
		\]
		Since $f-f_{\min}+\varepsilon \geq \varepsilon $ on $K$,	it follows from Theorem \ref{intro:comp:matPut3} that $f-f_{\min}+\varepsilon \in \qmod{G}_k$ for
		$$	
		k	\geq C \cdot  {8}^{7\eta} \cdot 3^{6(m-1)} \theta(m)^3n^2d_{G}^{6}\kappa^{7}d^{14\eta} \left(\frac{3\|f\|_B}{\varepsilon}\right)^{7 \eta+3}, \\
		$$
		and $\eta\leq (3^{m-1}d_G+1)(3^{m}d_G)^{n+\theta(m)-2}$.
		Then, we know that $f_{\min}-\varepsilon $ is feasible for  \reff{sos}, i.e., $f_k\geq f_{\min}-\varepsilon$ when  $k$ satisfies the above inequality. 
		Equivalently, we know that the relation $\reff{rate}$ holds.

		$\qed$
	\end{proof}

	\section{Conclusions and discussions}
	\label{sec:dis}
	
In this paper, we establish a polynomial bound on the degrees of terms appearing in matrix Putinar's Positivstellens{\"a}tz. For the special case that  the constraining polynomial matrix is diagonal, this result improves the exponential bound shown in \cite{helnie}.   As a byproduct, we also obtain a polynomial bound on the convergence rate of matrix SOS relaxations,  which
	resolves an open question posed by Dinh and  Pham \cite{dihu}.  When the constraining set is unbounded,  a similar bound is provided for   matrix  Putinar--Vasilescu's Positivstellens{\"a}tz.
	
	Theorem \ref{intro:comp:matPut3} uses the {\L}ojasiewicz  inequality \reff{def:Loj:G4} for  equivalent scalar  polynomial inequalities of $K$.  There also exists the  {\L}ojasiewicz  inequality for  polynomial matrices (see \cite{dihu1,dihu}), where the corresponding {\L}ojasiewicz exponent is typically smaller than the  scalar one as in \reff{def:Loj:G4}. An interesting future work is to provide a  proof  based directly on the {\L}ojasiewicz  inequality for $G(x)$, which may yields a better bound.  
	On the another hand,  the set  $\{x\in \mR^n\mid G(x)\succeq 0\}$ can be equivalently described by the quantifier set
	$
	\{x\in \mR^{n} \mid y^TG(x)y\geq 0, ~\forall~ \|y\|_2=1\}.
	$
	It would also be interesting to study the complexity of  Putinar-type Positivstellens{\"a}tze with universal quantifiers for the above reformulation. We refer to \cite{huin} for the recent work on  Positivstellens{\"a}tze with universal quantifiers.

	When $K$ is given by scalar polynomial inequalities, it was shown in  \cite{Robinson} that the {\L}ojasiewicz exponent $\eta$ as in \reff{Lojexp2} is equal to $1$ if the   linear independence constraint qualification  holds at every $x\in K$. Consequently, we can get a representation with order $\left(\frac{\|F\|_B}{F_{\min}}\right)^{10}$. An analogue to the linear independence
	constraint qualification  in nonlinear  semidefinite programming  is  the nondegeneracy condition \cite{shap,sunde}. It would also be interesting to explore whether similar results exist for polynomial matrix inequalities under the nondegeneracy condition.

	~\\
	{\bf Acknowledgements:} The author would like to thank Prof. Konrad Schm\"{u}dgen  for helpful discussions on Proposition \ref{finitebas},  Prof. Tien-Son Pham for fruitful comments on  this paper.
	The author also thanks the associate editor and three anonymous referees for valuable comments and suggestions.

	%\bibliographystyle{siam}      % mathematics and physical sciences
	%\bibliographystyle{spphys}       % APS-like style for physics
	%\bibliography{mybib}   % name your BibTeX data base

\end{document}